\newcolumntype{R}[1]{>{\raggedleft\arraybackslash }b{#1}}
\newcolumntype{L}[1]{>{\raggedright\arraybackslash }b{#1}}
\newcolumntype{C}[1]{>{\centering\arraybackslash }b{#1}}
\theoremstyle{plain}
\newtheorem{theorem}{Theorem}[section]
\newtheorem{lemma}[theorem]{Lemma}
\newtheorem{remark}[theorem]{Remark}
\newtheorem{assumption}[theorem]{Assumption}
\newcommand*\patchAmsMathEnvironmentForLineno[1]{%
  \expandafter\let\csname old#1\expandafter\endcsname\csname #1\endcsname
  \expandafter\let\csname oldend#1\expandafter\endcsname\csname end#1\endcsname
  \renewenvironment{#1}%
     {\linenomath\csname old#1\endcsname}%
     {\csname oldend#1\endcsname\endlinenomath}}%
\newcommand*\patchBothAmsMathEnvironmentsForLineno[1]{%
  \patchAmsMathEnvironmentForLineno{#1}%
  \patchAmsMathEnvironmentForLineno{#1*}}%
\newcommand{\dd}{\operatorname{d}\!}
\newcommand{\E}{\mathbb{E}}
\newcommand{\Cov}{\mathrm{Cov}}
\newcommand\norm[1]{\left\lVert#1\right\rVert}
\newcommand{\bs}{\boldsymbol}
\def\RR{ \mathbb R}
\def\Letters{A,B,C,D,E,F,G,H,I,J,K,L,M,N,O,P,Q,R,S,T,U,V,W,X,Y,Z}
\Letters \do{%
  \expandafter\edef\csname\@l bb\endcsname{\noexpand\ensuremath{\noexpand\mathbb{\@l}}}%
  \expandafter\edef\csname\@l bf\endcsname{{\noexpand\bs \@l}}%
  \expandafter\edef\csname\@l cal\endcsname{\noexpand\ensuremath{\noexpand\mathcal{\@l}}}%
  \expandafter\edef\csname\@l eu\endcsname{\noexpand\ensuremath{\noexpand\EuScript{\@l}}}%
  \expandafter\edef\csname\@l frak\endcsname{\noexpand\ensuremath{\noexpand\mathfrak{\@l}}}%
  \expandafter\edef\csname\@l rm\endcsname{{\noexpand\rm \@l}}%
  \expandafter\edef\csname\@l scr\endcsname{\noexpand\ensuremath{\noexpand\mathscr{\@l}}}%
}
\newcommand{\isdef}{\mathrel{\mathrel{\mathop:}=}}
\title{Space-time multilevel quadrature methods \\
and their application for cardiac electrophysiology}
\author[1]{S.~Ben~Bader}
\author[2]{H.~Harbrecht}
\author[1]{R.~Krause}
\author[1]{M.~Multerer}
\author[1,3]{A.~Quaglino}
\author[2]{M.~Schmidlin}
\affil[1]{Center for Computational Medicine in Cardiology,\par
Institute of Computational Science, \par
Universit\`a della Svizzera italiana,
Lugano, Switzerland}
\affil[2]{Department of Mathematics and Computer Science,\par
University of Basel, Basel, Switzerland}
\affil[3]{NNAISENSE SA, Lugano, Switzerland}
\date{\small\sffamily Last update: \today}
\begin{document}

\maketitle

\begin{abstract}
We present a novel approach which aims at high-performance uncertainty
quantification for cardiac electrophysiology simulations. Employing
the monodomain equation to model the transmembrane potential inside
the cardiac cells, we evaluate the effect of spatially correlated
perturbations of the heart fibers on the statistics of the resulting
quantities of interest. Our methodology relies on a close integration of
multilevel quadrature methods, parallel iterative solvers
and space-time finite element discretizations, allowing for a fully
parallelized framework in space, time and stochastics.
Extensive numerical studies are presented to
evaluate convergence rates and to compare the performance of
classical Monte Carlo methods such as standard Monte Carlo (MC)
and quasi-Monte Carlo (QMC), as well as multilevel strategies,
i.e.\ multilevel Monte Carlo (MLMC) and multilevel
quasi-Monte Carlo (MLQMC) on hierarchies of nested meshes.
We especially also employ a recently suggested variant
of the multilevel approach for non-nested meshes
to deal with a realistic heart geometry.
\end{abstract}
\newpage


\section{Introduction}
The heart is by all means one of the most complex and fascinating organs
in the human body. It harmoniously orchestrates the body activity
through the vital supply of blood to all of its components. This
activity is achieved via its pumping function that is a result of
a very complex contraction and relaxation cycle occurring in the cardiac
cells. The latter is itself controlled by a non-trivial pattern of
electrical activation. 

A misregulation of the electrical activity of the heart
can result in several diseases, having in worst cases lethal 
consequences. Therefore, it is of major importance to model 
and understand the heart activity, as this would 
allow for a better clinical diagnosis and treatment of patients.  

The heart consists of fibers, compare Figure \ref{fig::fibers}, 
that help propagate the electrical potential
inside the cardiac muscle. This process is originally initiated by a
stimulus coming from the sinoatrial node (SA) located on top of
the left and right atria. From there, the signal spreads all over the 
heart muscle in the form of a traveling wave front. The propagation 
also takes place in the heart cells that have the ability to actively 
respond to the electrical stimulation through voltage-gated ion channels.

\begin{figure}[H] 
\centering
\includegraphics[width=0.55\textwidth]{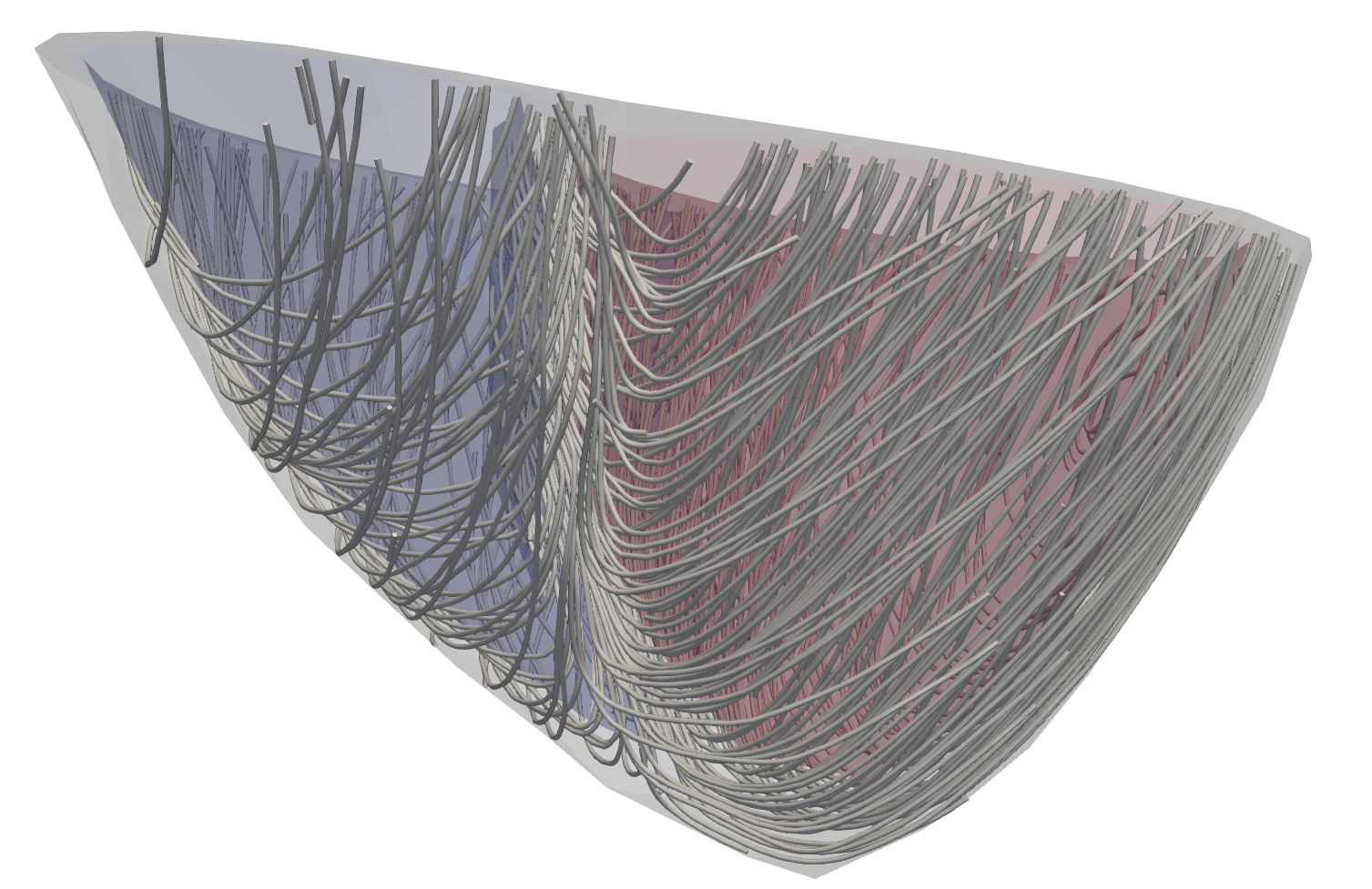}
\caption{Mathematical reconstruction of the
fiber field surrounding a synthetic heart geometry.}
\label{fig::fibers}
\end{figure} 

A mathematical model for describing the potential inside the cardiac
muscle therefore has to provide a suitable ionic channel model
in order to fully capture the phenomenological behaviour. Combined with
time--dependency, spatial diffusion and a forcing function modelling
the initial stimulus, one arrives at the monodomain equation,
which has been derived in \cite{hurtado2014gradient,miller1978simulation}.
It can be written in the following form:

\begin{alignat}{2} \label{monodomain}
  \dfrac{\partial u({\bs x}, t) }{\partial t} 
 -  \nabla \cdot\big( {\bs G}({\bs x}) \nabla u({\bs x}, t)\big)
 + I_\text{ion}\big(u({\bs x}, t)\big) & = I_\text{app} ({\bs x} ,t),
 &&\quad\text{for } ({\bs x},t) \in D \times (0,T],\nonumber \\
    {\bs G} ({\bs x}) \nabla u ({\bs x}, t) \cdot n & = 0,
    &&\quad\text{for  }({\bs x},t) \in \partial D \times (0,T],\\ 
     u({\bs x},0) & =  0, &&\quad\text{for }{\bs x} \in D.
     \nonumber
\end{alignat}
Here, 
$D \subset \mathbb{R}^d$ is the domain representing the heart,
$u=u({\bs x}, t)$ is the electrical potential, 
${\bs G} \colon D\to\mathbb{R}^{d\times d}$ is an anisotropic 
conductivity tensor
modeling the fiber direction,
$T\in \mathbb{R}^+$ is the end time,
$I_\text{app}\colon D \times [0,T] \rightarrow \mathbb{R}$ is
the forcing function for the stimulus created by the SA node,
and $I_\text{ion}\colon\mathbb{R} \rightarrow \mathbb{R}$
is an ion channel model.
The latter can be modeled in several ways accounting for different
levels of detail and complexity, see \cite{hodgkin1952quantitative}.
We rely here on the Fitz-Hugh Nagumo model, see
\cite{fitzhugh1961impulses},
for which we have: 
\begin{equation}
I_\text{ion}(u) = \alpha (u-u_\text{rest})(u-u_\text{th})
(u-u_\text{peak}),\quad\alpha>0.
\label{ionic_term}
\end{equation}
The values $u_\text{rest}$, $u_\text{th}$ and $u_\text{peak}$ 
are characteristic potential values of the electrical activation process.
They respectively represent the resting potential $u_\text{rest}$
(cell is unactivated), the threshold potential $u_\text{th}$
(cell is triggered) and the peak value $u_\text{peak}$ (cell is activated).

As depicted in Figure \ref{fig::fibers}, the fibers have a very complex but 
also well-organized structure, exhibiting key features that can be identified 
in all healthy subjects, such as a helical distribution with opposite orientations, 
from the endocardium to the epicardium. However, the exact fiber dislocations 
can vary not only from patient to patient, but can also change over time within 
the same patient due to pathologies, such as infarctions. Then, the fiber 
structure is perturbed with the introduction of high variability areas
in the presence of scars. To faithfully model the conductivity tensors 
used in electrophysiology, accurate measurements of these fibers 
are required. However, such measurements cannot be made available on a routine basis. 
Given that a highly accurate model of fibers and thus the conductivities 
are generally unavailable except for a few test subjects, it is of paramount importance 
to gauge the influence that uncertainties in the conductivity tensor have 
on the simulated activation patterns. Modeling and simulating the uncertainty 
in the fiber directions will be one of the major aspects of this article. 

We apply state of the art methods in uncertainty quantification (UQ). 
This means that we combine space-time GMRES with a block Jacobi 
preconditioner \cite{Benedusi2018} for solving the monodomain 
equation with multilevel quadrature methods for the UQ.
In our practical implementation, we use the multilevel (quasi-) 
Monte Carlo method, compare~\cite{barth2011multi,G08,GHM20,harbrecht2012multilevel,H98}.
Therefore, additional smoothness of the solution is required as already pointed out 
in \cite{harbrecht2012multilevel}. This smoothness has been verified in the stationary 
case for anistropic random diffusion problems in \cite{HaS20}, for linear random 
advection-diffusion-reaction problems in \cite{HeS20}, and for semilinear random 
diffusion problems in \cite{Schmidlin21}. Our numerical results in Section~\ref{sct:numerix}
show that the quasi-Monte Carlo method based on Halton points is superior 
over the Monte Carlo method and that the multilevel versions are superior
over the single-level versions of these quadrature methods. Indeed, the highest 
efficiency is provided by the multilevel quasi-Monte Carlo method.

The rest of this article is organized as follows. In Section~\ref{sec:Preliminaries},
we present the random model for the fibers of the heart muscle. Then, 
Section~\ref{sec:FEM}, is concerned with the space-time solver for
the monodomain equation. Quadrature methods to treat the 
randomness are outlined in Section~\ref{section:methods}. Finally,
in Section~\ref{sct:numerix}, numerical experiments are presented
in order to validate the present approach also in case of simulations
for realistic heart geometries. 

\section{Preliminaries}\label{sec:Preliminaries}
 \subsection{Random fiber directions}\label{subsec:randDiffusion}
Within this article, we will consider an uncertainty on the random fiber directions. To this end,
let \((\Omega,\mathcal{F},\mathbb{P})\) denote a complete and separable
probability space.
Then, for a given Banach space $\Xcal$ and $1 \leq p \leq \infty$,
the space $L^p(\Omega;\Xcal)$ 
denotes the Lebesgue-Bochner space,
see \cite{HP57},
which contains all equivalence classes of strongly measurable functions
$v \colon \Omega\to \Xcal$ with finite norm
\begin{equation*}
  \norm{v}_{L^p} \isdef
  \begin{cases}
    {\displaystyle\bigg(\int_\Omega
    \|{v(\omega)}\|_{\Xcal}^p}\bigg)^{1/p}\dd\Pbb(\omega),
    & p < \infty , \\
    \displaystyle\operatorname{ess\,sup}_{\omega\in\Omega}
    \|{v(\omega)}\|_{\Xcal} , & p = \infty.
  \end{cases}
\end{equation*}
In this context, a function $v \colon \Omega \to \Xcal$ is said to be
strongly measurable if there exists 
a sequence of simple functions
$v_n \colon \Omega\to \Xcal$,
such that for almost every $\omega\in\Omega$ we have
$\lim_{n \to \infty} v_n(\omega) = v(\omega)$.
Note that we also have the usual inclusion
$L^p(\Omega; \Xcal) \supset L^q(\Omega; \Xcal)$ provided that 
$1\leq p \le q \leq \infty$.
Given that $\big(\Xcal,(\cdot,\cdot)_{\Xcal}\big)$ is a
separable Hilbert space,
the Bochner space $L^2(\Omega; \Xcal)$ is a separable Hilbert space
as well, where
the inner product is defined as
\begin{equation*}
  (u, v)_{L^2} \isdef \int_\Omega\big({u(\omega),
  v(\omega)}\big)_{\Xcal}\dd\Pbb(\omega).
\end{equation*}
In particular, this space is isometrically isomorphic to the tensor
product space
$L^2(\Omega) \otimes \Xcal$, we refer to \cite{LC} for the details.

Subsequently, we will always equip the space $\Rbb^d$ with the
Euclidean
norm $\norm{\cdot}_2$
induced by the canonical inner product $\langle\cdot,\cdot\rangle$
and $\Rbb^{d \times d}$ with the norm $\norm{\cdot}_F$
induced by the Frobenius inner product $\langle{\bs A},{\bs B}\rangle_F
\isdef\operatorname{tr}({\bs A}^\intercal{\bs B})$.
To account for the anisotropies generated by the cardiac fibers, we
consider a conductivity tensor as proposed and analyzed in \cite{HPS17},
see also \cite{HaS20}. It is of the form
\begin{equation}
\label{eq:AdmV}
  {\bs G}({\bs x}, \omega) \isdef g {\bs I} +
  \big(\|{{\bs V}({\bs x}, \omega)}\|_2 - g\big)
 \frac{{\bs V}({\bs x}, \omega) {\bs V}^\intercal({\bs x}, \omega)}
 {{\bs V}^\intercal({\bs x}, \omega){\bs V}({\bs x}, \omega)} ,
\end{equation}
where $g>0$ is a given value and
${\bs V}\in L^\infty\big(\Omega; L^\infty(D; \Rbb^d)\big)$
is a random vector field.
Moreover, we require that there exist some constants
$b_{\min} \leq 1$ and $b_{\max} \geq 1$ such that 
$b_{\min} \leq g \leq b_{\max}$ and
\begin{equation}
  \label{eq:Vellipticity}
  b_{\min} \leq \operatorname{ess\,inf}_{{\bs x} \in D}
  \|{\bs V}({\bs x}, \omega)\|_2
  \leq \operatorname{ess\,sup}_{{\bs x} \in D}
  \|{\bs V}({\bs x}, \omega)\|_2 \leq b_{\max}
  \quad\text{$\Pbb$-almost surely}.
\end{equation}

The model \eqref{eq:AdmV} represents a medium 
that has homogeneous diffusion strength $g$ perpendicular to ${\bs V}$
and diffusion strength $\|{\bs V}({\bs x}, \omega)\|_2$
in the direction of ${\bs V}$. The randomness of the specific direction 
and length of ${\bs V}$ therefore quantifies the uncertainty of this 
notable direction and its diffusion strength.

\begin{lemma}
 A conductivity tensor of the form \eqref{eq:AdmV} is well-defined
and indeed also satisfies a uniform ellipticity condition, i.e.\
\begin{equation}\label{eq:Gellipticity}
b_{\min}\leq\operatorname{ess\,inf}_{{\bs x} \in D}
\|{\bs G}({\bs x},\omega)\|_2\leq
\operatorname{ess\,sup}_{{\bs x} \in D}\|{\bs G}({\bs x},\omega)\|_2\leq b_{\max}
  \quad\text{$\Pbb$-almost surely}.
\end{equation}
\end{lemma}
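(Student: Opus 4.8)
The plan is to diagonalize $\bs G(\bs x,\omega)$ explicitly and read off its eigenvalues, from which both assertions follow immediately. I would begin with well-definedness. The lower bound in \eqref{eq:Vellipticity} gives $\norm{\bs V(\bs x,\omega)}_2 \geq b_{\min} > 0$ for almost every $\bs x \in D$ and $\Pbb$-almost every $\omega$, so the denominator $\bs V^\intercal \bs V = \norm{\bs V}_2^2$ is bounded away from zero and the rank-one term in \eqref{eq:AdmV} is meaningful. Since $\bs V \in L^\infty\big(\Omega; L^\infty(D;\Rbb^d)\big)$ and $\bs G$ is built from $\bs V$ by algebraic operations that are continuous away from the origin (where $\bs V$ is never evaluated), $\bs G$ is strongly measurable and essentially bounded, i.e.\ $\bs G \in L^\infty\big(\Omega; L^\infty(D;\Rbb^{d\times d})\big)$.

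For the ellipticity I would fix a representative and abbreviate $\bs v = \bs V(\bs x,\omega)$ and $v = \norm{\bs v}_2$. The key observation is that $\bs P \isdef \bs v \bs v^\intercal/(\bs v^\intercal \bs v)$ is the orthogonal projector onto $\operatorname{span}(\bs v)$, so that \eqref{eq:AdmV} may be rewritten as $\bs G = g\bs I + (v-g)\bs P$. The projector $\bs P$ has the eigenvalue $1$ with eigenvector $\bs v$ and the eigenvalue $0$ of multiplicity $d-1$ on $\bs v^\perp$; hence $\bs G$ is symmetric, shares these eigenvectors, and has eigenvalues $g+(v-g)=v$ along $\bs v$ and $g$ on $\bs v^\perp$. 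Its spectrum is therefore $\{v,g\}$, with $g$ of multiplicity $d-1$.

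Since $\bs G$ is symmetric with strictly positive eigenvalues, its spectral norm equals the largest eigenvalue, $\norm{\bs G}_2 = \max\{v,g\}$. Combining $b_{\min}\leq g\leq b_{\max}$ with the bounds $b_{\min}\leq v\leq b_{\max}$ supplied by \eqref{eq:Vellipticity} yields $b_{\min}\leq\max\{v,g\}\leq b_{\max}$ pointwise almost everywhere, and passing to the essential infimum and supremum over $\bs x\in D$ gives exactly \eqref{eq:Gellipticity}. I do not anticipate a substantive obstacle: the lemma is in essence a spectral computation, and the only point requiring care is the non-vanishing of the denominator, which is precisely what the strict positivity $b_{\min}>0$ guarantees. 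As a side remark, the same eigenvalue computation shows the smallest eigenvalue of $\bs G$ equals $\min\{v,g\}\geq b_{\min}$, so $\bs G$ is in fact uniformly positive definite and thus elliptic in the sense required for the well-posedness of \eqref{monodomain}.
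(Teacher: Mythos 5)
Your proposal is correct and follows essentially the same route as the paper: both arguments note that $\|{\bs V}\|_2^2 \geq b_{\min}^2 > 0$ makes the rank-one term well-defined, then diagonalize ${\bs G}$ by observing that ${\bs V}$ is an eigenvector with eigenvalue $\|{\bs V}\|_2$ while the orthogonal complement carries the eigenvalue $g$, so the spectrum is $\{g,\|{\bs V}\|_2\}$ and the bounds follow from \eqref{eq:Vellipticity} and $b_{\min}\leq g\leq b_{\max}$. Your reformulation via the orthogonal projector and the added remarks on measurability and positive definiteness are harmless elaborations of the same spectral computation.
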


\begin{proof}
  For almost every $\omega \in \Omega$ and almost every ${\bs x} \in D$, 
  we have that ${\bs G}({\bs x}, \omega)$ is well-defined, because of
\[
    {\bs V}^\intercal({\bs x}, \omega) {\bs V}({\bs x}, \omega)
    = \|{\bs V}({\bs x}, \omega)\|_2^2 \geq b_{\min}^2 > 0 ,
\]
  and clearly symmetric. 
 Furthermore, we can choose ${\bs u}_2, \ldots, {\bs u}_d \in \Rbb^d$
 that are perpendicular to ${\bs V}({\bs x}, \omega)$
  and are linearly independent.
  Thus, for $i = 2, \ldots, d$, it holds that
  \begin{equation*}
    {\bs G}({\bs x}, \omega) {\bs u}_i = g {\bs u}_i
\quad\text{and}\quad 
    {\bs G}({\bs x}, \omega) {\bs V}({\bs x}, \omega) =
    \|{\bs V}({\bs x}, \omega)\|_2 {\bs V}({\bs x}, \omega) .
  \end{equation*}
  Consequently, we obtain
  for almost every $\omega \in \Omega$ and almost every ${\bs x} \in D$ that
  \begin{align*}
    \lambda_{\min}\big({\bs G}({\bs x}, \omega)\big) &=
    \min\{g, \|{\bs V}({\bs x}, \omega)\|_2\} \geq b_{\min},\\
   \lambda_{\max}\big({\bs G}({\bs x}, \omega)\big) &
   = \max\{g, \|{\bs V}({\bs x}, \omega)\|_2\} \leq b_{\max}.
  \end{align*}
This shows \eqref{eq:Gellipticity}.
\end{proof}

 \subsection{Karhunen-Lo\`eve expansion}\label{subsec:KLexpansion}
To make random (vector) fields feasible for numerical computations,
we separate the spatial variable ${\bs x}$ and the stochastic parameter
$\omega$ by computing the Karhunen-Lo\`eve expansion.
To this end, we require the expectation and the covariance
of the underlying random field.
For example, in case of \({\bs V}\), they are given by
\begin{equation*}
  \Ebb[{\bs V}]({\bs x})
  = \int_\Omega {\bs V}({\bs x}, \omega) \dd{\Pbb(\omega)}
\end{equation*}
and
\begin{equation*}
  \operatorname{Cov}[{\bs V}]({\bs x}, {\bs x}')
  = \int_\Omega {\bs V}_0({\bs x}, \omega) {\bs V}_0^\intercal({\bs x}', \omega) \dd{\Pbb(\omega)} ,
\end{equation*}
respectively, where
\begin{equation*}
  {\bs V}_0({\bs x}, \omega) \isdef {\bs V}({\bs x}, \omega) - \Ebb[{\bs V}]({\bs x})
\end{equation*}
denotes the centered vector field.

Given the eigenpairs
\(\{\lambda_k,{\boldsymbol\psi}_k\}_k\) of the Hilbert-Schmidt operator
{\(\Ccal\)} defined by \(\operatorname{Cov}[{\bs V}]\), that is
\begin{equation*}
    (\Ccal {\bs u})({\bs x})\isdef \int_D \Cov[{\bs V}]({\bs x}, {\bs x}') {\bs u}({\bs x}') \dd{{\bs x}'},
\end{equation*}
the Karhunen-Lo\`eve expansion of \({\bs V}\) reads
\begin{equation}\label{eq:KL}
 {\bs V}({\bs x}, \omega)
 = \Ebb[{\bs V}]({\bs x}) + \sum_{k=1}^{\infty}
 \sqrt{\lambda_k} {\boldsymbol\psi}_k({\bs x}) Y_k(\omega).
\end{equation}
Herein, the uncorrelated, normalised and centered random variables 
\(\{Y_k\}_k\) are obtained in accordance with
\begin{equation*}
  Y_k(\omega) \isdef \frac{1}{\sqrt{\lambda_k}} \int_D 
  {\bs V}_0^\intercal({\bs x}, \omega){\boldsymbol\psi}_k({\bs x}) \dd{{\bs x}}.
\end{equation*}
Note that as ${\bs V}\in L^\infty\big(\Omega; L^\infty(D; \Rbb^d)\big)$
we particularly know that ${\boldsymbol\psi}_k \in L^\infty(D; \Rbb^d)$
and $Y_k \in L^\infty\big(\Omega; \Rbb)$, see \cite{HPS17}.

Now, by introducing \(\sigma_k\isdef\sqrt{\lambda_k} \norm{Y_k}_{L^\infty(\Omega; \Rbb)}\)
we can assume, without loss of generality, that $Y_k \in[-1,1]$ and thus
may instead consider the vector field ${\bs V}$ in the parametrised form
\begin{equation}
  \label{eq:KLp}
  {\bs V}({\bs x}, {\bs\omega})
  = \Ebb[{\bs V}]({\bs x}) +
  \sum_{k=1}^{\infty} \sigma_k {\boldsymbol\psi}_k({\bs x})\omega_k ,
\end{equation}
where ${\bs\omega}\isdef [\omega_k]_{k \in \Nbb} \in \square \isdef [{-1}, 1]^{\Nbb}$
and \(\omega_k\) is the canonical random variable on the probability
space \(\big([-1,1],\mathcal{B}([-1,1]),\mathbb{P}_{Y_k}\big)\).
Consequently, we can also view ${\bs G}({\bs x}, {\bs\omega})$
as being parametrised by ${\bs\omega}$.

We now impose some common assumptions,
which make the Karhunen-Lo\`eve expansion computationally feasible.
\begin{assumption}
  The random variables $\{Y_k\}_{k \in \Nbb}$ are independent and uniformly distributed
  on \([-1,1]\), which indeed implies that \(\sigma_k = \sqrt{3 \lambda_k}\)
  and that $\mathbb{P}_{Y_k}$ coincides with the normalised Lebesgue measure on $[-1,1]$.
  Moreover, the sequence ${\boldsymbol\gamma} = \{\gamma_k\}_k$, given by 
  \begin{equation*}
    \gamma_k \isdef\|\sigma_k {\boldsymbol\psi}_k\|_{L^\infty(D; \Rbb^d)} ,
  \end{equation*}
  is at least in $\ell^1(\Nbb)$,
  where we have set ${\boldsymbol\psi} \isdef \Ebb[{\bs V}]$ and
  $\sigma_0 \isdef 1$.
\end{assumption}

 \subsection{Discretization of the random vector field}\label{subsec:randField}
 The Karhunen-Lo\`eve expansion in the form of \eqref{eq:KLp} 
 cannot directly be used on a computer. In what follows, we therefore
 present a means how the Karhunen-Lo\`eve expansion can be numerically
 approximated with finite elements. As before, we only consider
 the vector valued case here.
 Let the random vector field be given by its expectation
 \(\E[{\bs V}]({\bs x}) = [\E_i[{\bs V}]({\bs x})]_{i=1}^d\)
 and its covariance function
 \(\Cov[{\bs V}]({\bs x},{\bs x}')
 =[\Cov_{i,j}[{\bs V}]({\bs x},{\bs x}')]_{i,j=1}^d\), which we assume to be at
 least continuous.
Moreover, let \(\{{\bs x}_i\}_{i=1}^n\subset D\) be the vertices of
the nodal finite element basis
\(\{\phi_1,\ldots,\phi_n\}\),
i.e.\ \(\phi_i({\bs x}_j)=\delta_{i,j}\),
coming from the finite element space $\mathcal{S}_L$
where \(n = n_L = \operatorname{dim}(\mathcal{S}_L)\),
see Subsection~\ref{disc}.
Then, we can approximate the expectation by its finite element interpolant
\[
\E[{\bs V}]({\bs x})\approx\sum_{i=1}^n
\E[{\bs V}]({\bs x}_i)\phi_i({\bs x})
\]
and in complete analogy the covariance by
\[
\Cov[{\bs V}]({\bs x},{\bs x}')\approx\sum_{i,j=1}^n
\Cov[{\bs V}]({\bs x}_i,{\bs x}_j)\phi_i({\bs x})\phi_j({\bs x}').
\]

In order to determine the Karhunen-Lo\`eve expansion of \({\bs V}\), 
we have to solve the operator eigenvalue problem
\[
\int_{D}\Cov[{\bs V}]({\bs x},{\bs x}'){\boldsymbol\psi}({\bs x}')\dd{\bs x}'=\lambda{\boldsymbol\psi}({\bs x}).
\]
Thus, by replacing \(\Cov[{\bs V}]\) with its finite element interpolant
and testing with respect to the basis functions \({\phi}_i\otimes{\bs e}_j\), \(i=1,\ldots,n\),
\(j=1,\ldots,d\), where \(\{{\bs e}_j\}_j\) is the canonical basis of \(\Rbb^d\),
we end up with the generalized algebraic eigenvalue problem
\begin{equation}\label{eq:algEig}
\begin{bmatrix}{\bs M} & & \\
& \ddots & \\
& & {\bs M}\end{bmatrix}
{\bs C}
\begin{bmatrix}{\bs M} & & \\
& \ddots & \\
& & {\bs M}\end{bmatrix}
{\bs v}=\lambda\begin{bmatrix}{\bs M} & & \\
& \ddots & \\
& & {\bs M}\end{bmatrix}{\bs v},\quad{\bs v}\in\mathbb{R}^{dn}.
\end{equation}
Herein, the matrix
\[
{\bs C}\isdef\begin{bmatrix}
 \big[\Cov_{1,1}[{\bs V}]({\bs x}_i,{\bs x}_j)\big]_{i,j=1}^n & \cdots & \big[\Cov_{1,d}[{\bs V}]({\bs x}_i,{\bs x}_j)\big]_{i,j=1}^n\\
 \vdots & \ddots & \vdots \\
\big[\Cov_{d,1}[{\bs V}]({\bs x}_i,{\bs x}_j)\big]_{i,j=1}^n &\cdots &
\big[\Cov_{d,d}[{\bs V}]({\bs x}_i,{\bs x}_j)\big]_{i,j=1}^n
\end{bmatrix}\in\mathbb{R}^{dn\times dn}
\]
is the covariance function evaluated in all combinations of grid points, while
\[
{\bs M}\isdef[m_{i,j}]_{i,j=1}^n\in\mathbb{R}^{n\times n}\quad\text{with }m_{i,j}\isdef
\int_{D}\phi_j\phi_i\dd{\bs x}
\]
denotes the finite element mass matrix.

The algebraic eigenvalue problem \eqref{eq:algEig} can now 
efficiently be solved by means of the pivoted Cholesky decomposition 
as follows:
Let \({\bs C}\approx{\bs L}{\bs L}^\intercal\) with \({\bs L}\in\mathbb{R}^{dn\times M}\)
and \(M\ll n\)
be the low-rank approximation generated by the 
pivoted Cholesky decomposition of \({\bs C}\) as described in, e.g.\ \cite{HPS12,HPS14}.
Then, we approximate the eigenvalue problem \eqref{eq:algEig} by
\begin{equation}\label{eq:approxEig}
\begin{bmatrix}{\bs M} & & \\
& \ddots & \\
& & {\bs M}\end{bmatrix}
{\bs L}{\bs L}^\intercal
\begin{bmatrix}{\bs M} & & \\
& \ddots & \\
& & {\bs M}\end{bmatrix}
{\bs v}=\lambda\begin{bmatrix}{\bs M} & & \\
& \ddots & \\
& & {\bs M}\end{bmatrix}{\bs v},\quad{\bs v}\in\mathbb{R}^{dn}.
\end{equation}
This eigenvalue problem is equivalent to the much smaller eigenvalue problem
\begin{equation}\label{eq:redEig}
{\bs L}^\intercal\begin{bmatrix}{\bs M} & & \\
& \ddots & \\
& & {\bs M}\end{bmatrix}{\bs L}\tilde{\bs v}=\lambda\tilde{\bs v},
\quad\tilde{\bs v}\in\mathbb{R}^M.
\end{equation}
In particular, if \(\tilde{\bs v}_i\) is an eigenvector of \eqref{eq:redEig}
with eigenvalue \(\lambda_i\), then \({\bs v}_i\isdef{\bs L}\tilde{\bs v}_i\)
is an eigenvector of \eqref{eq:approxEig} with eigenvalue \(\lambda_i\).
Moreover, there holds
\[
{\bs v}_i^\intercal\begin{bmatrix}{\bs M} &  & \\
& \ddots & \\ & & {\bs M}\end{bmatrix}{\bs v}_j=\lambda_i\delta_{i,j}.
\]
\begin{remark}
The cost for computing the pivoted Cholesky decomposition is
\(\mathcal{O}(dnM^2)\) and, since all entries of
\({\bs C}\) can be computed on the fly without the need of storing the
entire matrix \({\bs C}\), 
the storage cost is \(\mathcal{O}(dnM)\). Moreover,
the small eigenvalue problem \eqref{eq:redEig} can be solved 
with cost \(\mathcal{O}(M^3)\). Thus, since usually \(M\ll n\),
the overall cost for computing the Karhunen-Lo\`eve expansion 
of \({\bs V}\) by the suggested approach 
is also \(\mathcal{O}(dnM^2)\) in total.
\end{remark}

Based on the suggested low-rank approach, we end up with
a discretized random field of the form
\begin{equation}
  \label{eq:KLlowrank}
  \tilde{\bs V}({\bs x}, {\bs\omega})
  = \sum_{i=1}^n\Ebb[{\bs V}]({\bs x}_i)\phi_i({\bs x}) +
  \theta\sum_{k=1}^{M}{\sigma}_k\omega_k
  \sum_{i=1}^n{\bs c}_{k,i}\phi_i({\bs x}),\quad{\bs\omega}\in[-1,1]^M,
\end{equation}
where the coefficients \({\bs c}_{k,i}\in\mathbb{R}^d\) are obtained
from combining all coefficients from the eigenvector \({\bs v}_k\)
that interact with the basis function \({\phi}_i\). Moreover, we introduce the scaling parameter \(\theta>0\) to guarantee \eqref{eq:Vellipticity} in our numerical studies.

\subsection{Quantities of interest} \label{subsection:qoi}
Due to the randomness of the heart fibers' orientations as described 
above, the monodomain equation \eqref{monodomain} 
now translates into the following parametric version 
provided for all ${\bs\omega}\in [-1,1]^M$:
\begin{equation} \label{eq:stochastic-monodomain}
  \dfrac{\partial u({\bs z},{\bs\omega}) }{\partial t} 
 -  \nabla \cdot\big( {\bs G}({\bs x}) \nabla u({\bs z},{\bs\omega})\big)
 + I_\text{ion}\big(u({\bs z},{\bs\omega})\big)  = I_\text{app} ({\bs z}),
 \quad {\bs z}\isdef({\bs x},t) \in D \times (0,T] .
\end{equation}
Our aim is to determine statistics of the random solution \(u({\bs z},{\bs\omega})\),
which amounts to
the evaluation of the high-dimensional integral given by
\begin{equation} \label{eq:qoi-integral}
    \operatorname{QoI}[u] = \int_{[-1,1]^M} \mathcal{F}\big(u(\cdot,{\bs\omega})\big) \rho ({\bs\omega}) \dd{\bs\omega}.
\end{equation}
Here, $\rho ({\bs\omega}) = \prod_{i=1}^M \rho_k(\omega_k)$ is the joint density function of \({\bs\omega}\) from \eqref{eq:KLlowrank} and $\mathcal{F}$ denotes
a functional that encodes a particular quantity of interest. 
We shall focus here on three different quantities of interest. 

\paragraph{Transmembrane potential.}
The transmembrane potential over the totality of the heart geometry 
and its evolution in time is the quantity obtained by solving the 
monodomain equation. This is demonstrated as an electrical potential 
wave travelling through the heart,
cf.\ Figure $\ref{fig::transmembrane}$. The functional $\mathcal{F}$ in this
case is simply the identity function, i.e.\
\[\mathcal{F}\big(u(\cdot,{\bs\omega})\big) = u(\cdot,{\bs\omega}).\]
We remark that considering a fine discretization in space and time, the full information on the transmembrane potential represents a high-dimensional
output that might easily become a burden at the memory level in a context
of a UQ study. 

\begin{figure}[htb]
\centering
\includegraphics[width=0.3\textwidth]{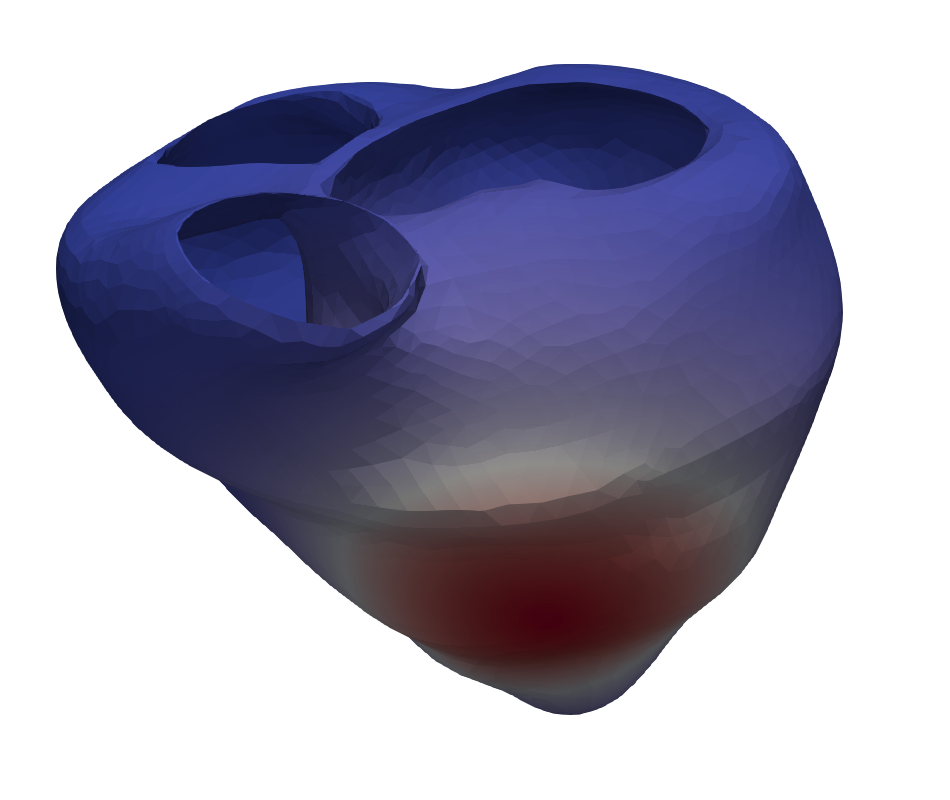}
\includegraphics[width=0.3\textwidth]{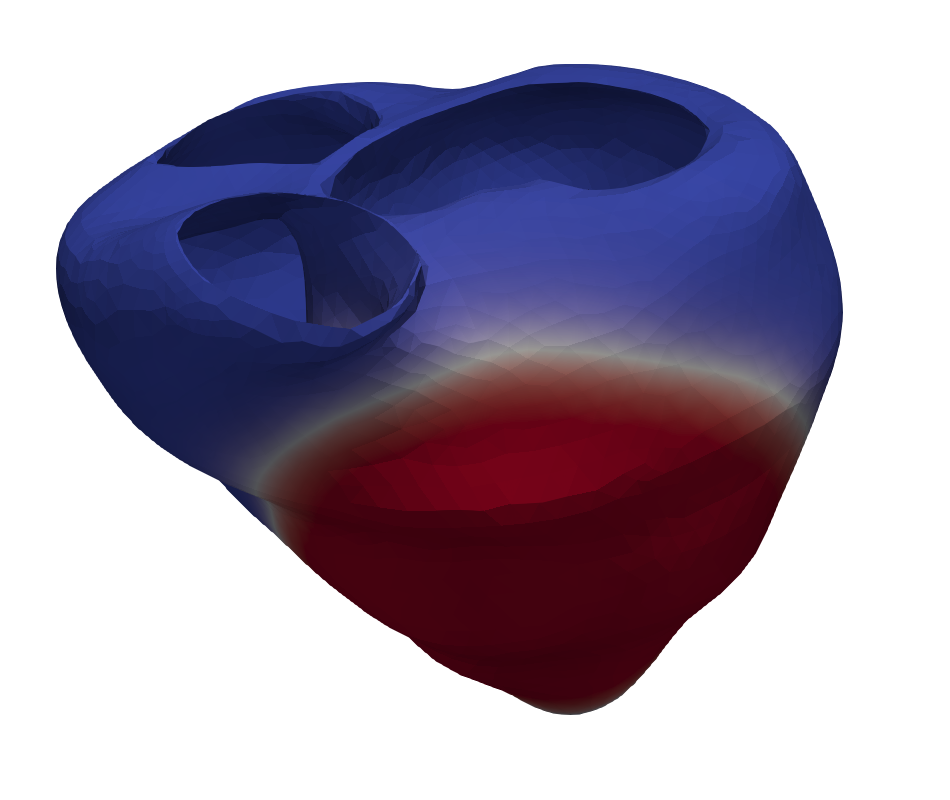}
\includegraphics[width=0.3\textwidth]{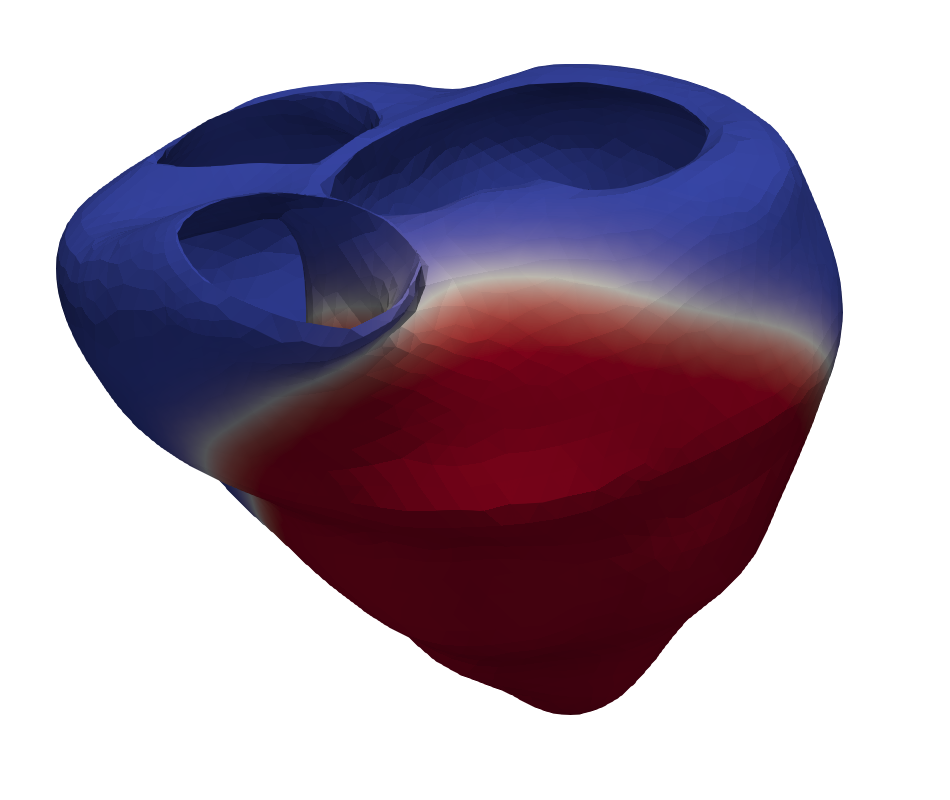}
\includegraphics[width=0.3\textwidth]{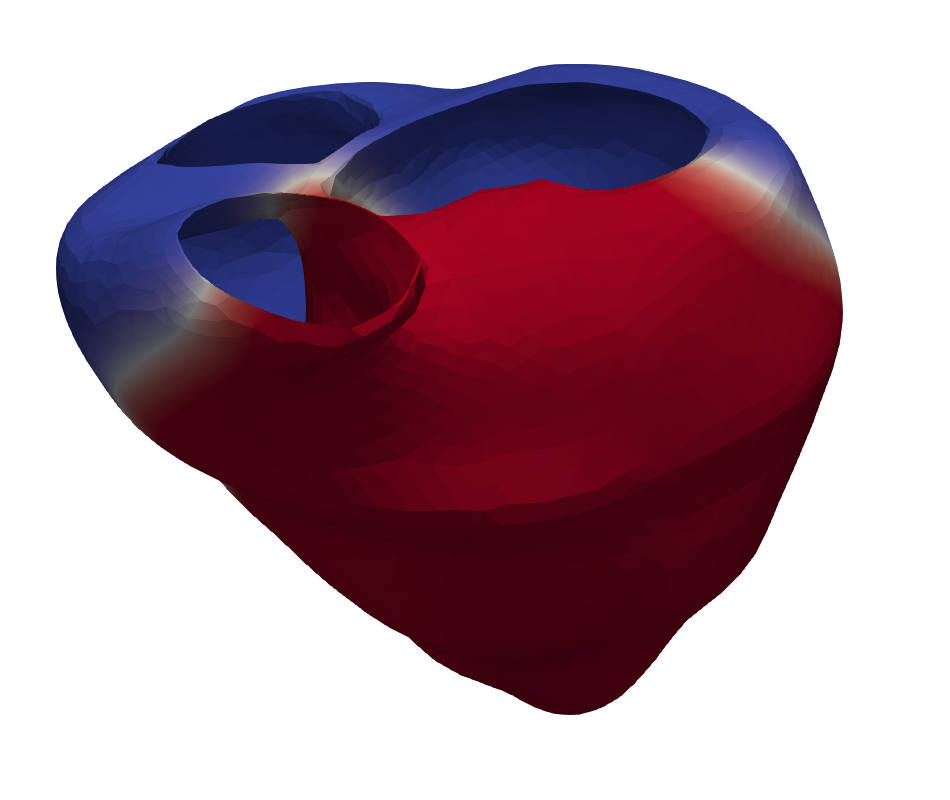}
\includegraphics[width=0.3\textwidth]{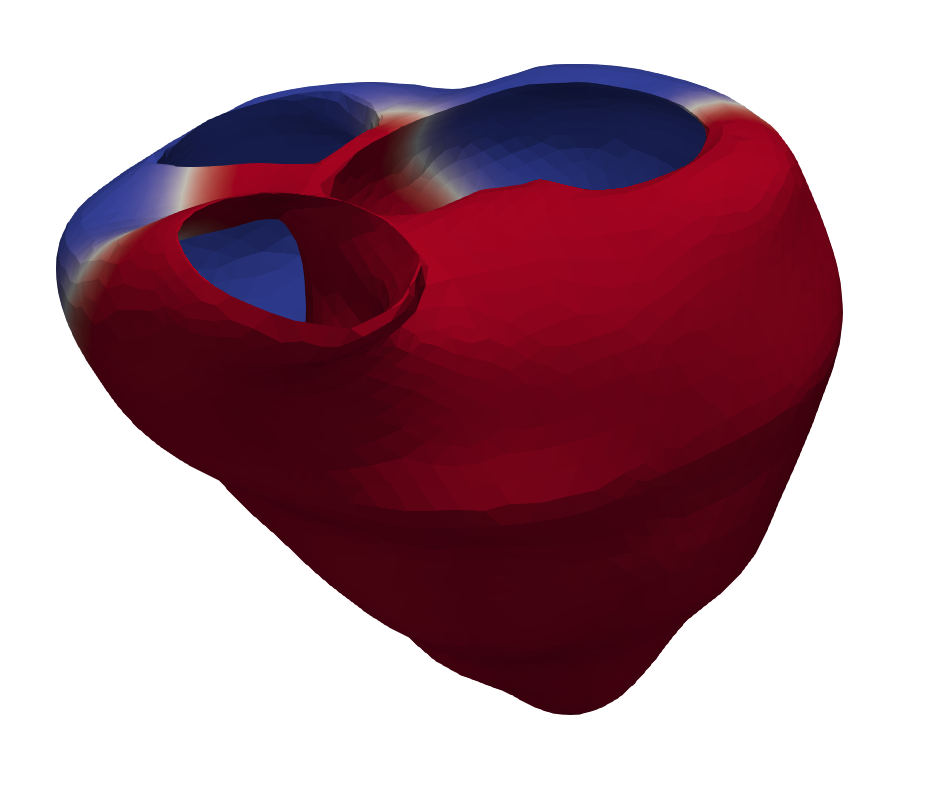}
\includegraphics[width=0.3\textwidth]{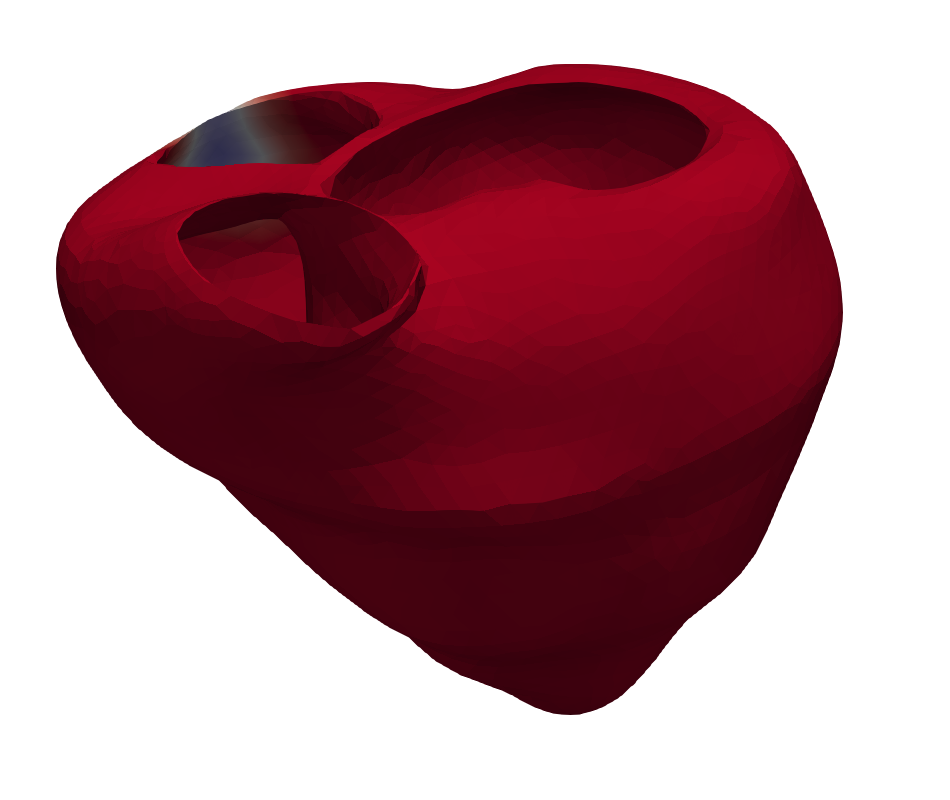}
\caption{Wavefront propagation of transmembrane potential.}
\label{fig::transmembrane}
\end{figure} 

\paragraph{Activation map.}
The transmembrane potential can be used in order to extract the activation map of the heart. This reduces the size of the output to that of the dimension indicating the times at which cells are activated. The activation time $a({\bs x}_0,{\bs\omega})$
at a given location \({\bs x}_0\) is defined as the right inverse  
\[a({\bs x}_0,{\bs\omega}) =
\min \{ t \in [0,T] : u({\bs x}_0,t,{\bs\omega}) \geq u_{\text{th}} \}.
\] The functional $\mathcal{F}$ for this case can therefore be written as
$\mathcal{F}\big(u(\cdot,{\bs\omega})\big) = a({\bs x}_0,{\bs\omega})$.

\begin{figure}[htb]
\centering
\includegraphics[width=0.3\textwidth]{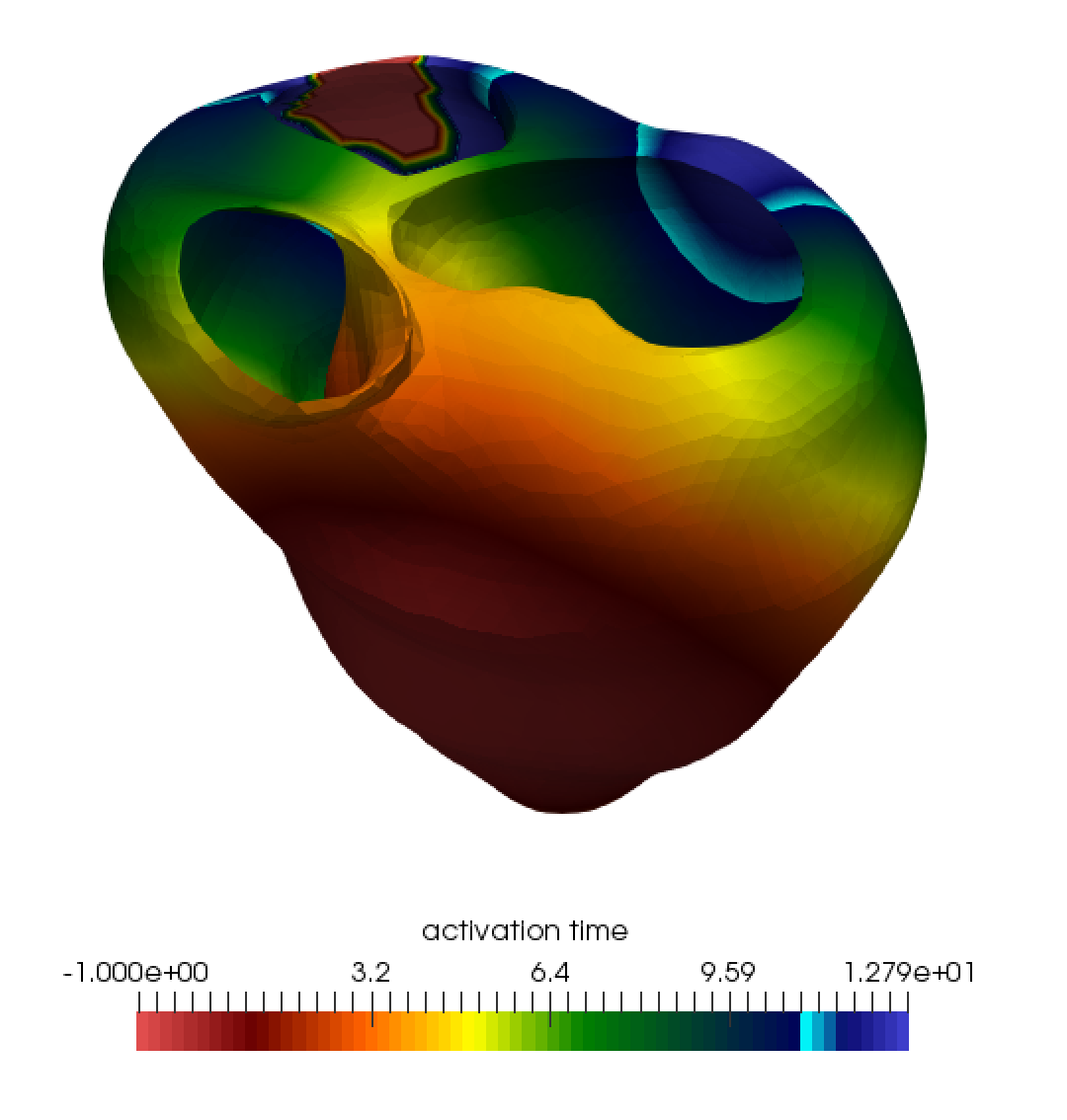}
\includegraphics[width=0.3\textwidth]{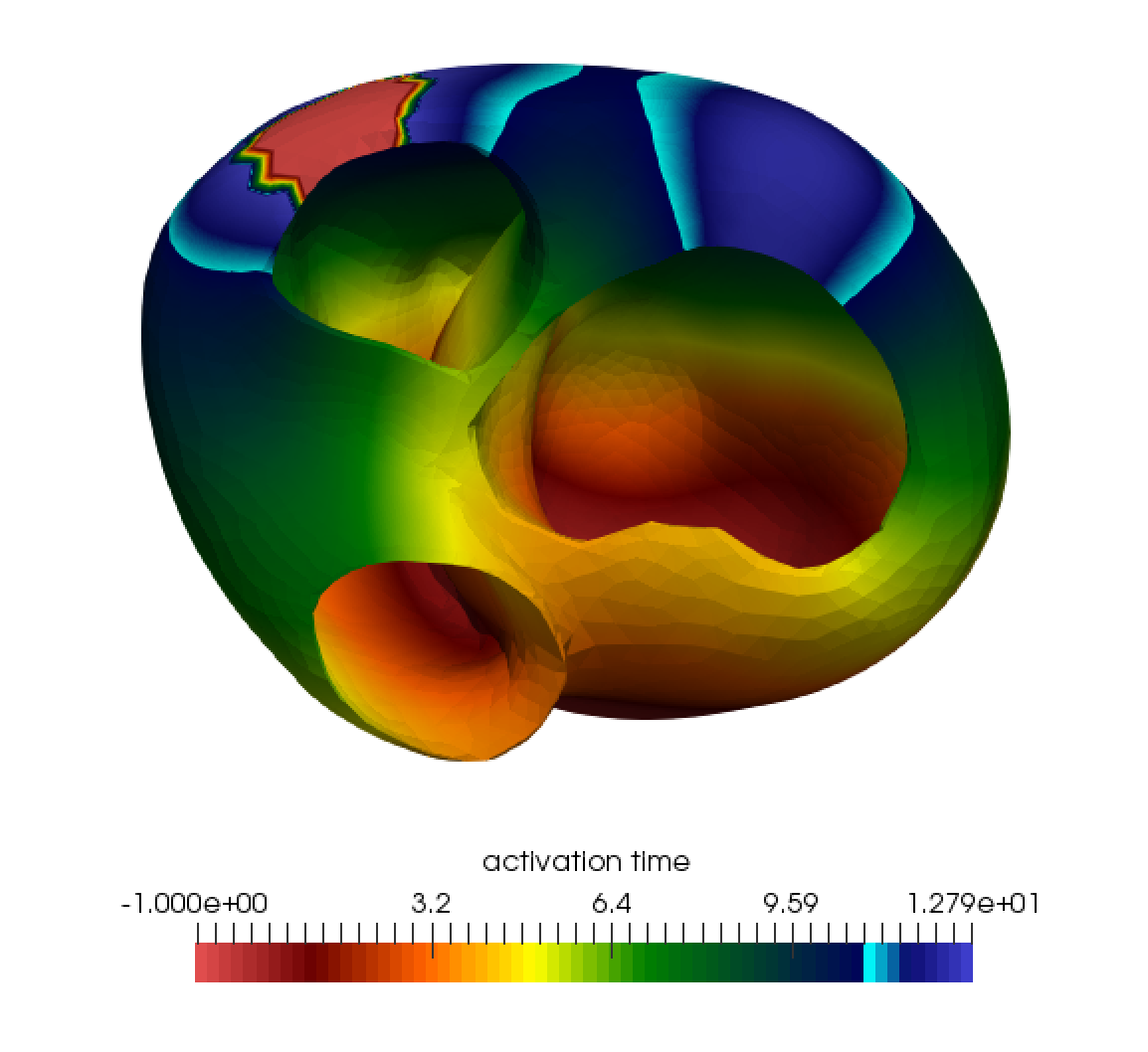}
\includegraphics[width=0.3\textwidth]{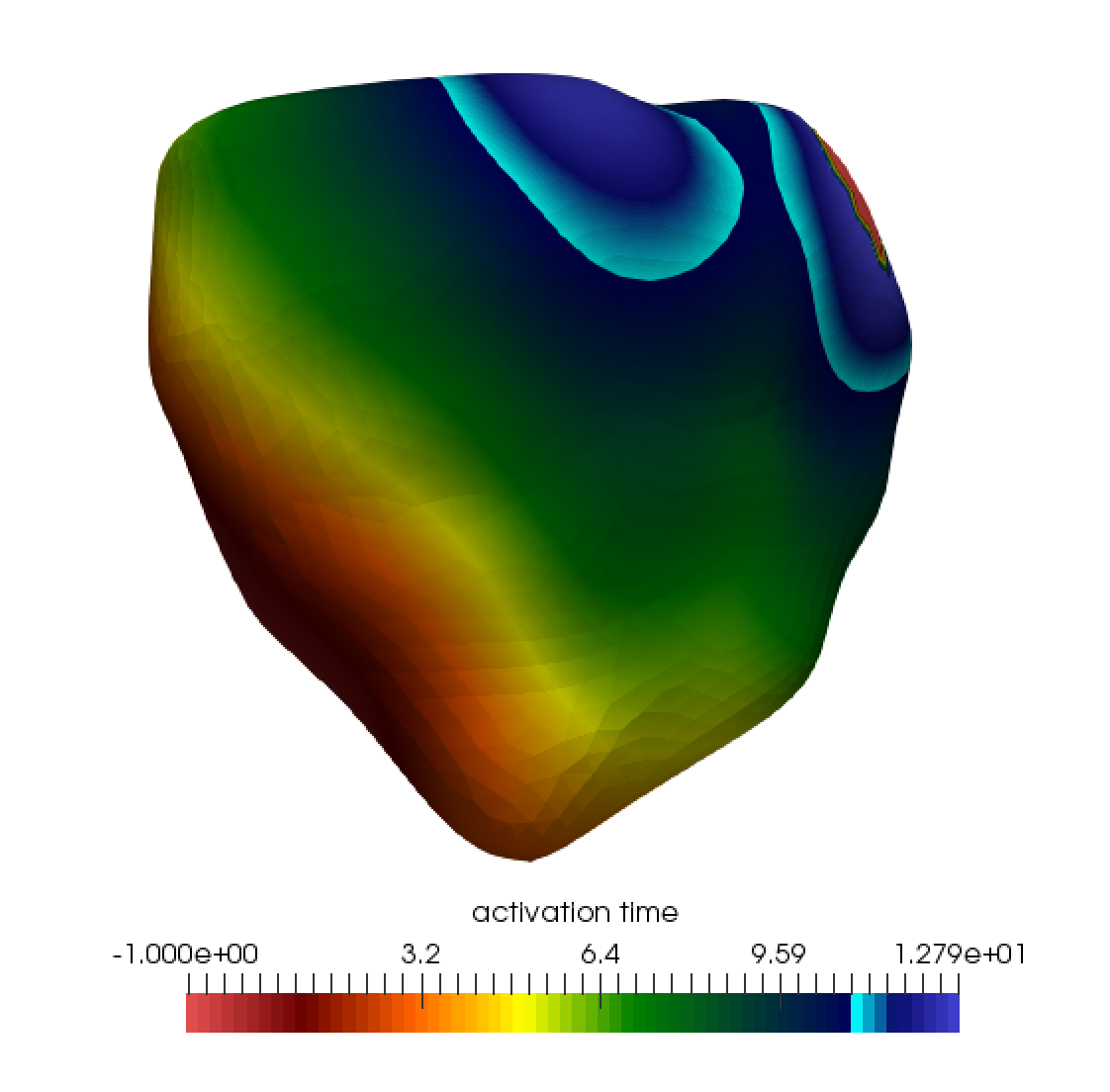}
\caption{Activation map of a heart. Negative value signifies non activated region.}
\label{fig::activation}
\end{figure} 

\paragraph{Action potential.}
Another relevant quantity of interest is the evaluation 
of the transmembrane time evolution at a given location ${\bs x}_0$. 
Mathematically speaking, this corresponds to the functional 
\[
\mathcal{F}\big(u(\cdot,{\bs\omega})\big) 
= u({\bs x}_0,t,{\bs\omega}).
\]

Regarding the approximation of the integral $\eqref{eq:qoi-integral}$ 
for a given functional $\mathcal{F}$, we rely on high-dimensional quadrature methods 
that require solving the monodomain equation in quadrature points represented
by different realizations of ${\bs\omega} \in [-1,1]^M.$ The resulting procedure
is a sampling method and requires a finite element solve for every sample. 

\section{Discretization of the monodomain equation}\label{sec:FEM}
The parametric monodomain equation \eqref{eq:stochastic-monodomain} can be solved numerically for all parameters ${\bs\omega}\in [-1,1]^M$ by means of finite elements in space and finite differences in time, i.e.\ by using a sequential time-stepping method. In view of employing multilevel quadrature methods for approximating the integral \eqref{eq:qoi-integral}, we set the stage for a similar refinement rate of the space and time grid resolutions. In particular, we employ an all-at-once approach in space and time, where we assemble a large space-time system that is solved in parallel \cite{mcdonald2016simple}. 

This approach allows for a similar error decay with respect to the space and time discretization steps. In addition, it enhances the parallel scalability of the numerical method, by allowing parallelization also in the time dimension. 
For a comprehensive review of parallel-in-time methods, see \cite{gander201550}.\\

\subsection{Space-time assembly of the heat equation } \label{disc}



%
Let us consider a nested sequence of shape regular 
tetrahedralizations $\{ \mathcal{T}_l \}_{l \geq 0}$ of the 
spatial domain $D$, where each $\mathcal{T}_l$ is of 
mesh size $h_l \thicksim 2^{-l}$. For all levels $l \geq 0$,  
we define the continuous, piecewise linear finite element spaces 
\[
    \mathcal{S}_l = \{ v_l \in C^0(D) : v_{l|T} \in \mathbb{P}_1(T),\ T \in  \mathcal{T}_l \}.
\]
We denote with $\{\phi_{l,i}\}_{i=1}^{n_l} \subset \mathbb{P}_1$ the 
sets of linear nodal basis functions for $\mathcal{S}_l$.
For each index $l$ we also partition the time interval $[0, T]$ into $m_l-1$ equisized
subintervals of length $\Delta t_l = T/(m_l-1)$, such that $\Delta t_l \sim h_l$.
This uniform partition is thus given by the nodes
$t_{l,k} = (k-1)\Delta t_l$ with $k=1,\ldots,m_l$.

We start by neglecting the non-linear term from \eqref{eq:stochastic-monodomain}, 
that is $I_{\text{ion}}$, to derive instead the space-time linear system arising from
the closely related heat equation. For the sake of readability, we also assume 
that a particular realization of parameter ${\bs\omega} \in [-1,1]^M$ is given, 
and therefore disregard it for the analysis that follows. Assuming that the 
solution $u({\bs x} , t)$ is sufficiently regular in $D$, we derive the weak formulation:
\begin{align*}
&\text{for all $ t \in (0,T] $, find $u(\cdot, t) \in H^1(D)$ such that}\\
&\qquad\int_D \dfrac{\partial u({\bs x} , t)}{\partial t} v({\bs x})\dd{\bs x}
+\int_D {\bs G}({\bs x}) \nabla u({\bs x},t) \nabla v({\bs x}) \dd{\bs x}
= \int_D I_{\text{app}}({\bs x} , t) v({\bs x}) \dd{\bs x}\\
&\hspace*{7cm}\text{for all } v \in H^1(D).
\end{align*}
Assosiated with this weak formulation, we have the Galerkin 
approximation on level $l$ given by:
\begin{align*}
&\text{for all $t \in (0,T] $, find $u_l(\cdot, t) \in \mathcal{S}_l$ such that}\\
&\qquad\int_D \dfrac{\partial u_l({\bs x} , t)}{\partial t} v_l({\bs x}) \dd{\bs x}
+\int_D {\bs G}({\bs x}) \nabla u_l({\bs x},t) \nabla v_l({\bs x}) \dd{\bs x}= \int_D I_{\text{app}}({\bs x} , t) v_l({\bs x})  \dd{\bs x}\\
&\hspace*{7cm}\text{for all } v_l \in S_l.
\end{align*}
As each function $u_l(\cdot, t) \in \mathcal{S}_l$ can be expressed 
as a linear combination of the corresponding basis elements, i.e.
\begin{equation}
    u_l({\bs x},t) = \sum_{i = 1}^{n_l} u_{l,i}(t) \phi_{l,i}({\bs x}),
\end{equation}
we can recover the semi-discrete formulation of the problem:
\begin{equation}
  {\bs M}_l  \dfrac{\partial {\bs u}_l(t)}{\partial t} +{\bs K}_l{\bs u}_l(t) = {\bs I}_{\text{app},l}(t), \quad {\bs u}_l(t) = [u_{l,1}(t),\ldots,u_{l,n_l}(t)]^\intercal.
  \label{semi-disc}
\end{equation}
 Here, ${\bs M}_l \in \mathbb{R}^{n_l \times n_l}$ and
 ${\bs K}_l \in \mathbb{R}^{n_l \times n_l}$ are the 
 mass and stiffness matrices on level \(l\) defined as
\[
{\bs M}_l\isdef \left[ \int_D \phi_{l,j}({\bs x}) \phi_{l,i}({\bs x})  d{\bs x} \right]_{i,j=1}^{n_l}, \quad
{\bs K}_l\isdef \left[ \int_D {\bs G}({\bs x})\nabla \phi_{l,j}({\bs x})\nabla \phi_{l,i}({\bs x})  d{\bs x} \right]_{i,j=1}^{n_l},
\]
and the right-hand side ${\bs I}_{\text{app},l}(t)\in\mathbb{R}^{n_l}$ is
\[
  {\bs I}_{\text{app},l}(t)\isdef \left[ \int_D I_{\text{app},l}({\bs x} , t) \phi_{l,i}({\bs x}) \right]_{i=1}^{n_l}.
\]

We next apply the second order Crank--Nicolson method 
for the time discretization of \eqref{semi-disc} and obtain 
for $k=1,\ldots,m_l-1$ the system of equations
\begin{equation}\label{st-system}
\begin{aligned}
 \left({\bs M}_l + \frac{\Delta t_l}{2}{\bs K}_l \right) {\bs u}_{l,k+1}\
 + \left( - {\bs M}_l + \frac{\Delta t_l}{2}{\bs K}_l \right){\bs u}_{l,k} = {\bs I}_{\text{app},l,k} \quad \text{and} \quad {\bs u}_{l,k}\isdef {\bs u}_l(t_{l, k}),\\
  \text{with} \quad  {\bs I}_{\text{app},l,k}\isdef \frac{\Delta t_l}{2} \big(  {\bs I}_{\text{app},l}(t_{l, k+1}) + {\bs I}_{\text{app},l}(t_{l, k})  \big).
\end{aligned}
\end{equation}
 If we define ${\bs A}_{l}\isdef{\bs M}_l + \frac{\Delta t_l}{2}{\bs K}_l $
 and ${\bs B}_{l}\isdef - {\bs M}_l + \frac{\Delta t_l}{2}{\bs K}_l$,
 the system of equations \eqref{st-system} can be summarized in 
 compact form according to
\begin{equation}
\begin{tikzpicture}[baseline=(current bounding box.center)]
\matrix (m) [matrix of math nodes,nodes in empty cells,right delimiter={]},left delimiter={[} ]{
{\bs A}_l  &  &   &    \\
{\bs B}_l  & {\bs A}_l & &  \\
 & & &     \\
  & & {\bs B}_l & {\bs A}_l  \\
} ;
\draw[loosely dotted] (m-2-1)-- (m-4-3);
\draw[loosely dotted] (m-2-2)-- (m-4-4);
\end{tikzpicture}\begin{bmatrix}
    {\bs u}_{l,1}\\{\bs u}_{l,2}\\\vdots\\{\bs u}_{l,m_l}
\end{bmatrix}=
\begin{bmatrix}
    {\bs I}_{\text{app},l,1}\\{\bs I}_{\text{app},l,2}\\\vdots\\{\bs I}_{\text{app},l,m_l}
\end{bmatrix} 
 \quad \Longleftrightarrow  \quad{\bs C}_l {\bs u}_l = {\bs I}_{\text{app},l},
\label{eq:system_C}
\end{equation}
where ${\bs C}_l\in \mathbb{R}^{n_l m_l \times n_l m_l}$ is a 
large space-time system that can be distributed and solved in parallel and 
\[ {\bs u}_l\isdef [{\bs u}_{l,1},{\bs u}_{l,2},\ldots,{\bs u}_{l,m_l}]^\intercal \quad \text{and} \quad {\bs I}_{\text{app},l}\isdef [{\bs I}_{\text{app},l,1},{\bs I}_{\text{app},l,2},\ldots,{\bs I}_{\text{app},l,m_l}]^\intercal.\] 

\subsection{Space--time assembly of the monodomain equation}

The discretization of \eqref{eq:stochastic-monodomain} is an extension of the assembly procedure described in Section~\ref{disc}.  In particular,
the linear system \eqref{eq:system_C} is modified to contain the discretization of the non-linear reaction term $I_{\text{ion}}$ 
\begin{equation}\label{mono_disc}
{\bs C}_l{\bs u}_l + {\bs r}({\bs u}_l) = {\bs I}_{\text{app},l},
\end{equation}
where ${\bs r}({\bs u}_l)\in \mathbb R^{n_l m_l}$ is given by 
$$ 
{\bs r}({\bs u}_l)\isdef(\Delta t_l I_{m_l} \otimes M_l) {\bs I}_{\text{ion}}({\bs u}_l) \quad \text{with } {\bs I}_{\text{ion}}({\bs u}_l)\isdef [I_{\text{ion}}(u_1),\ldots,I_{\text{ion}}(u_{n_l m_l})]^\intercal.$$
Here, $n$ and $m$ are respectively the space and time degrees of freedom. 

The non--linear equation \eqref{mono_disc} is solved by using Newton's method.
The Jacobian ${\bs J}(\mathbf{u}_l)\in\mathbb R^{n_l m_l\times n_l m_l}$ of the 
non-linear operator on the left-hand side of \eqref{mono_disc} is given by
$$  {\bs J}(\mathbf{u}_l)= {\bs C}_l + (\Delta t_l {\bs I}_{m_l} \otimes{\bs M}_l)
\cdot {\bs J}{\bs I}_{\text{ion}}({\bs u}_l)$$ 
with ${\bs J}{\bs I}_{\text{ion}}({\bs u}_l)\in\mathbb R^{n_l m_l\times n_l m_l}$ 
being the block diagonal matrix
$${\bs J}{\bs I}_{\text{ion}}(\mathbf{u}_l)\isdef
\begin{bmatrix}I_{\text{ion}}'(u_1) & & & \\ & I_{\text{ion}}'(u_2) & & \\ & & \ddots & \\ & & & I_{\text{ion}}'(u_{n_l m_l})\end{bmatrix}.$$

\subsection{Solution strategy}
We rely on a solution strategy that retakes the main 
features of the one used in~\cite{benbader2021space}.
Specifically, we approach the parallelization of the solver,
the preconditioning and the Newton initial guess strategy as follows.

\paragraph{Multiple time blocks strategy.} Combination of the Newton's 
method with the space--time all--at--once approach may face a problem 
of convergence when a large time interval is required. We resort to a 
multiple time blocks strategy, see also~\cite{benedusi2020parallel}, 
consisting of decomposing the original time interval into smaller chunks 
(the so--called time blocks). The problem is solved sequentially on each 
of the time blocks, provided that the initial condition of the current time 
block is set to the final state of the previous one. Considering a time 
interval $[0,T]$, uniformly partitioned in $m$ time steps, the latter is 
further divided in $K$ multiple blocks such that there exists $T' \in \RR$ 
and $m' \in \mathbb{N}$ for which we have $T = K T'$ and $m = K m'$.  
We denote the solution on the $k-$th time block with 
\[
    \textbf{u}_{[k]} = [ \textbf{u}_{[k],1} , \ldots, \textbf{u}_{[k],m'}  ]^\intercal\in \mathbb{R}^{n m'} ,
\]
where $n$ is the number of spatial degrees of freedom. The 
initial condition of the current time block to solve translates to,
\[
     \textbf{u}_{[k+1],0} =  \textbf{u}_{[k],m'}\quad \text{for all}\ k = 1, \ldots, K-1,
\]
with respect to that computed from the previous time block. The 
Newton initial guess is further set to be the final time step solution 
of the previous block, generalized to all the time steps, that is 
\[
    \textbf{u}_{[k+1]}^{(0)} = [ \textbf{u}_{[k],m'} , \ldots ,\textbf{u}_{[k],m'} ]^\intercal\in \mathbb{R}^{n m'} .
\]

\paragraph{Preconditioning.} 
Every linear problem in the form of  \eqref{eq:system_C} (arising 
at every Newton iteration) is solved by means of a space-time parallel 
GMRES with block Jacobi preconditioner. The spectral analysis of the 
space-time system in \eqref{eq:system_C} motivates this choice,
see~\cite{Benedusi2018} for all the details.

\paragraph{Newton initial guess.} 
We use a sample based Newton initial guess strategy. For each 
sample, the first Newton iterate is provided by an unperturbed 
reference solution. This is done in two ways, locally, according 
to the multiple time blocks strategy, and globally with respect to 
the original time interval. We compare their performances in 
Figure \ref{fig::Newton-cube} with the direct method and the 
multiple time blocks strategy described above, by solving the 
monodomain equation on six time blocks. The acronyms DM, 
MTB, LNIG and GNIG designate respectively the Direct Method, 
the Multiple Time Block, the Local Newton Initial Guess and the 
Global Newton Initial Guess.

\begin{figure}[H]
\centering
\includegraphics[width=0.49\textwidth]{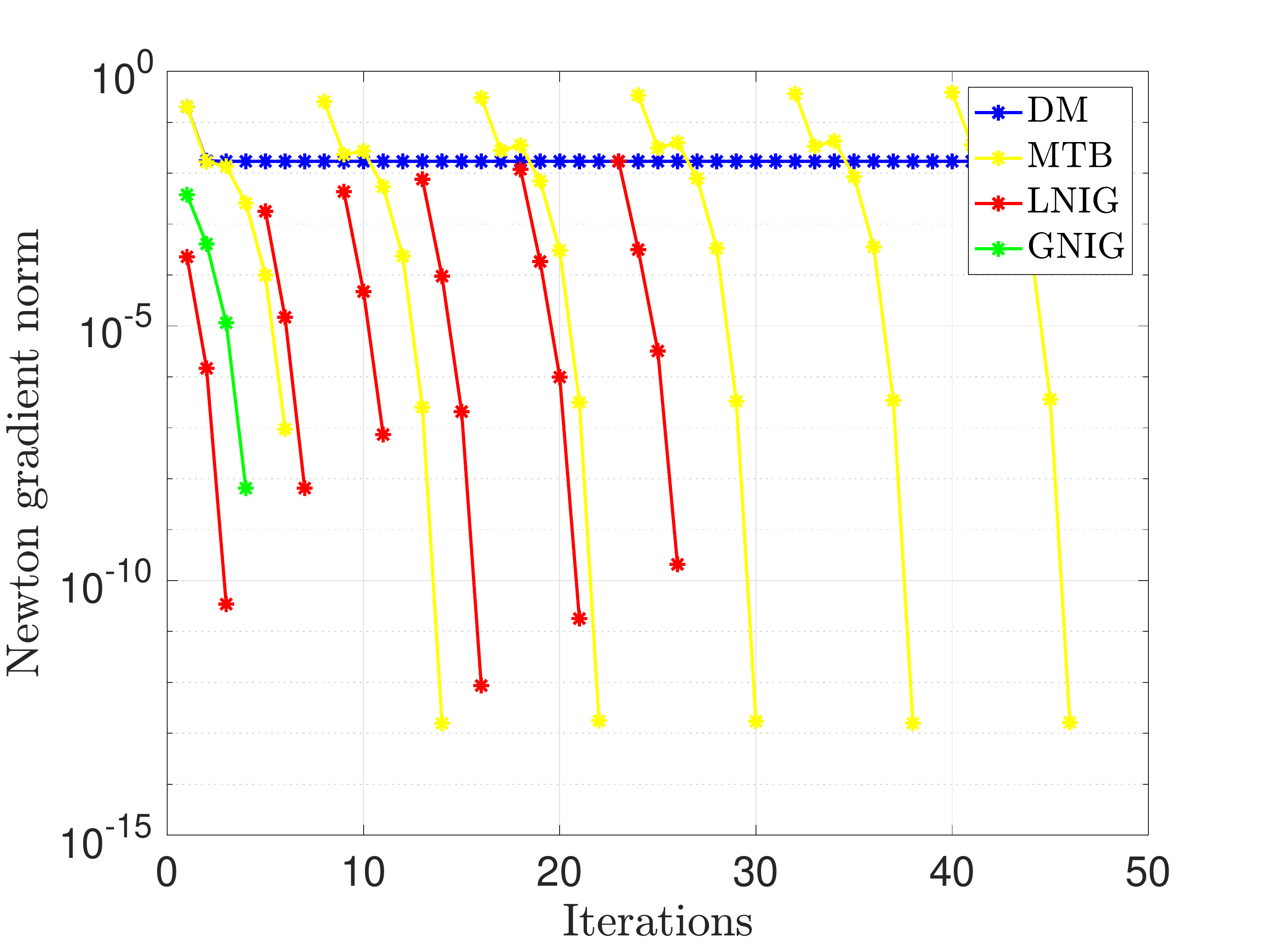}
\captionsetup{justification=centering}
\caption{Comparison of the different Newton initial guess strategies for a sample on cube.}
\label{fig::Newton-cube}
\end{figure} 

\section{Multilevel quadrature methods}  \label{section:methods} 
In order to compute the quantities of interest under consideration, i.e.\
\begin{equation*}
    \operatorname{QoI}[u] = \int_{[-1,1]^M} \mathcal{F}\big(u(\cdot,{\bs\omega})\big) \rho ({\bs\omega}) \dd{\bs\omega},
\end{equation*}
we employ multilevel quadrature methods. To this end, we introduce the
sequence \(\mathcal{F}_l[u]({\bs\omega})\isdef \mathcal{F}\big(u_l(\cdot,{\bs\omega})\big)\) that approximates \(\mathcal{F}[u]({\bs\omega})\isdef \mathcal{F}\big(u(\cdot,{\bs\omega})\big)\)
and, instead of the single level estimator
\begin{equation}\label{eq:SL}
\operatorname{QoI}_L^{\text{SL}}[u]
\isdef \mathcal{Q}_{L} \big(\mathcal{F}_{L}[u](\cdot)\big),
\end{equation}
consider the multilevel estimator
\begin{equation}\label{eq:StandardML}
\operatorname{QoI}_L^{\text{ML}}[u]
\isdef\sum_{l=0}^L
\mathcal{Q}_{L-l}\big(\mathcal{F}_l[u](\cdot)-
\mathcal{F}_{l-1}[u](\cdot)\big),
\end{equation}
where \(\{\mathcal{Q}_{l}\}_{l \geq 0}\) is a sequence of quadrature rules,
and $\mathcal{Q}_{-1} \equiv 0$.
This is the standard and widely used multilevel estimator, 
which has been introduced in \cite{H98,G08,barth2011multi}. 
It consists in defining the multilevel estimator as the sum of
quadratures applied to the difference of finite element solutions. 
The construction of this multilevel estimator has been shown 
to be equivalent to the sparse grid combination technique of 
the finite element space and the stochastic space, compare
\cite{GH12,harbrecht2012multilevel}. In particular, the roles of 
these spaces can be exchanged to present the multilevel estimator 
\eqref{eq:StandardML} in a different way. Namely, it can 
equivalently be written as
\begin{equation}\label{eq:PimpedML}
\operatorname{QoI}_L^{\text{ML}}[u]
\isdef\sum_{l=0}^L
(\mathcal{Q}_{l}-\mathcal{Q}_{l-1})\big(\mathcal{F}_{L-l}[u](\cdot)\big).
\end{equation}
This is especially favourable in case of non-nested meshes. 
In addition, the computational complexity is reduced when 
nested quadrature points are applied, see \cite{GHM20} for 
the details.

For the approximation error of the multilevel quadrature, there holds a
sparse tensor product-like error estimate. If \(\varepsilon_l\to 0\)
is a monotonically decreasing sequence with \(\varepsilon_l\cdot\varepsilon_{L-l}
=\varepsilon_L\) for every \(L\in\mathbb{N}\) and
\[
\|\mathcal{Q}_{L-l}\mathcal{F}[u]-\operatorname{QoI}[u]\|\leq c_1\varepsilon_{L-l}\quad\text{and}\quad
\|\mathcal{F}[u]-\mathcal{F}_l[u]\|\leq c_2\varepsilon_l
\]
for some suitable norms and constants \(c_1,c_2>0\), then
\[
\big\|\operatorname{QoI}_L^{\text{ML}}[u]-\operatorname{QoI}[u]\big\|\leq C L\varepsilon_L
\]
for a constant \(C>0\), provided that $u$ and $u_l$ are sufficiently regular.
We refer to \cite{HPS12} for details on the multilevel quadrature. 
Moreover, we remark that the presented
error estimate is based on error equilibration. It is, however, 
also possible to equilibrate the computational work or the degrees of
freedom, see \cite{gilesmultilevel,griebel2013note}.

An important component for the multilevel quadrature is the
intergrid transfer of the data. The transfer of data from 
the coarse level to the fine level is not needed if
only real-valued quantities of interest, based on point evaluations,
are considered. Otherwise, the estimator \eqref{eq:PimpedML} can
efficiently be computed
by transferring the data only after accumulating it for each level.
The transfer of data from 
the fine level to the coarse level however is mandatory for each sample,
as the Karhunen-Lo\`eve expansion has to be computed on 
the fine grid. 

We remark that has been shown in \cite{siebenmorgen2015quadrature}
that the computation 
of the stiffness matrix with respect to the random diffusion field is
consistent with a piecewise linear finite element discretization
if the midpoint rule with respect to the current grid is applied. 
Therefore, for the transfer of the random fields from the finest 
level to the coarser levels, we perform an element--wise transfer 
based on the midpoint rule; that is, for every element of the coarse 
level, we assign an constant diffusion value corresponding to the 
fine element containing its center. Therefore, the assembly of the
stiffness matrix on the coarser levels can be performed with linear 
cost relative to the particular level of discretization.


\section{Numerical experiments}\label{sct:numerix}
\subsection{Setup}
The numerical experiments have been conducted for three
test-case geometries: a cube, an idealized ventricle and
a heart geometry (atria excluded) that was acquired through real
patient CCT data. We will refer to the latter geometry with
the term ``realistic heart''. The simulations have been realized
using \texttt{SLOTH}, see \cite{quaglino2017sloth}, a UQ 
Python library developed at the Institute of Computational 
Science (ICS) in Lugano. For this work, we extended it to 
the monodomain equation (and in general to all types of 
$3+1$ dimensional PDEs) by employing \texttt{Utopia}, 
see \cite{Zulian2017d}, for the finite element formulation. 

\paragraph{Parameters for the monodomain equation.} 
Regarding the models \eqref{monodomain} and \eqref{ionic_term}, 
we will always rely on the following parameters:    
\begin{itemize}
    \item The values for the ionic channel model $I_\text{ion}(u)$ in \eqref{ionic_term}
    are set as $\alpha = 1.4\cdot10^{-3}$ mV$^{-2}$ms$^{-1}$, 
    $u_\text{rest}=0$ mV, $u_\text{th}=28$ mV, and $u_\text{peak} = 115$ mV. 
    \item We choose
    \[
    I_{\text{app}}({\bs x},t) = \left( u_\text{rest} + u_\text{peak} \exp{ \left( - \dfrac{ ( {\bs x} - {\bs x}_0 )^2 }{ \sigma^2 } \right)} \right) \chi_{[0,t_1)}(t),
    \]
    where $t_1 = \Delta t = 0.005$ ms is the function we rely on for the 
    applied stimulus. Parameters $\sigma$ and ${\bs x}_0$ represent 
    respectively the power and the location of the stimulus. They are 
    geometry dependent.
\end{itemize}

\paragraph{Parameters for Karhunen-Lo\`eve expansion.} 
For the numerical experiments,
we introduce a scaling factor $\theta\in\RR$ into the covariance kernel
$\Cov[{\bs V}]$ to be able to easily scale the applied perturbation size.
The stochastic dimension $M$ in the computed, parametric Karhunen-Lo\`eve expansion
\eqref{eq:KLlowrank}
arises from prescribing the truncation error $\epsilon = 10^{-2}$ in the
pivoted Cholesky decomposition. The other parameters used in 
the Karhunen-Lo\`eve expansion differ from one experiment to 
another and are listed below:
\begin{itemize}
\item \textbf{Cube.} 
We consider isotropic diffusion and a scaling factor $\theta=0.3$ on 
\(D=[-0.5,0.5]^3\). 
The covariance matrix is induced by the scalar covariance kernel  
$\Cov[{\bs V}]({\bs x},{\bs x}') = \theta^2 e^{\frac{-\|{\bs x}-{\bs x}'\|_2^2}{\sigma_{\text{KL}}}}$
with $\sigma_{\text{KL}} =0.25$. The low-rank 
Cholesky approximation of the covariance matrix yielded the stochastic 
dimension $M = 66$. The mean diffusion is set to
$\Ebb[{\bs V}]({\bs x}) = 3.325\cdot 10^{-3}$mm$^2$ ms$^{-1}$ for all ${\bs x} \in D$. 
\item \textbf{Idealized ventricle.}  We consider isotropic diffusion 
and the scaling factor $\theta=0.3$. The covariance kernel is given 
by $ \Cov[{\bs V}]({\bs x},{\bs x}') = \theta^2 e^{\frac{-\|{\bs x}-{\bs x}'\|_2^2}
{\sigma_{\text{KL}}}}$ with $\sigma_{\text{KL}} =0.5$. The stochastic 
dimension is given by $M = 87$. The mean diffusion is set to 
$\Ebb[{\bs V}]({\bs x}) = 3.325\cdot 10^{-3}$mm$^2$ ms$^{-1}$
for all ${\bs x} \in D$. The domain's bounding box is given by
\([0.80,2.8]\times[1.01,3.5]\times[0.60,2.6]\).
\item \textbf{Heart geometry.} We consider anisotropic diffusion
with a block-diagonal covariance matrix given by 
$\Cov_{i,j}[{\bs V}]({\bs x},{\bs x}') = \delta_{i,j} \theta^2 e^{\frac{-\|{\bs x}-{\bs x}'\|_2^2} {\sigma_{\text{KL}}}}$
for $1 \leq i,j \leq 3$, where $\delta_{i,j}$ is the 
Kronecker delta. We set $\sigma_{\text{KL}} =0.16$. The scaling 
factor is set to $\theta=0.3$. The stochastic dimension is $M=135$. 
Furthermore, the perpendicular diffusion in \eqref{eq:AdmV} is 
chosen as $g=1.625\cdot 10^{-3}$mm$^2$ ms$^{-1}$
while $\Ebb[{\bs V}]({\bs x})$ is specified later.
The domain's bounding box is given by
\([-0.41,0.66]\times[0.21,1.03]\times[0.56,1.61]\).
\end{itemize}

\paragraph{Reference solution and error metrics.} 
In all the convergence and work comparison graphs that 
follow, the referenced root mean square error
in the $H^q$ norm ($L^2=H^0$ and $H^1$ for 
respectively $q=0$ and $q=1$) for
$\operatorname{QoI}_l = \operatorname{QoI}_l^{\text{SL}}$
or $\operatorname{QoI}_l = \operatorname{QoI}_l^{\text{ML}}$
is given by 
\begin{equation}\label{eq:space-time-error}
e_l = \Big(\mathbb{E}\Big[\big\|\operatorname{QoI}_l[u] - \operatorname{QoI}_{\text{ref}}[u]\big\|_{L^2((0,T);H^q(D))}^2\Big]\Big)^{1/2}
\end{equation}
in case of a space--time quantity of interest,
such as the transmembrane potential,
\begin{equation}\label{eq:point-error}
e_l = \Big(\mathbb{E}\Big[\big\|\operatorname{QoI}_l[u] - \operatorname{QoI}_{\text{ref}}[u]\big\|_{H^q(0,T)}^2\Big]\Big)^{1/2}
\end{equation}
in case of a time quantity of interest,
such as the action potential at a given location ${\bs x}_0$ in space,
and
\begin{equation}\label{eq:activation-time-error}
e_l = \Big(\mathbb{E}\Big[\big|\operatorname{QoI}_l[u] - \operatorname{QoI}_{\text{ref}}[u]\big|^2\Big]\Big)^{1/2}
\end{equation}
in the case of a scalar quantity of interest,
such as the activation time at a given point ${\bs x}_0$ in space.
Note that the mean in the above expressions is taken over the realisations
of the possibly non-deterministic quadrature formulas.
Specifically, for the Monte Carlo quadrature, the expectation for 
both, the single-level and multilevel runs, are approximated 
by averaging over 10 simulations at each level of precision 
for the nested case study, and 5 simulations for the 
non--nested example.
The reference quantity of interest $\operatorname{QoI}_{\text{ref}}[u]$
is computed by using $N=\text{10'000}$ samples drawn from the Halton sequence. 

The intergrid transfer of a space--time quantity of interest
from a given coarse level $l$ to the fine level $L$, required 
to evaluate the error \eqref{eq:space-time-error}, is 
performed by means of the tensor product of the space and 
time interpolation matrices.
Obviously, the intergrid transfer of a time quantity of interest
from a given coarse level $l$ to the fine level $L$, required 
to evaluate the error \eqref{eq:point-error}, is 
performed analogously by means of the time interpolation matrices.

\paragraph{Quadrature methods.} 
In our experiments, we will consider the Monte Carlo
(MC) and quasi-Monte Carlo (QMC) quadrature method and
their multilevel pendants MLMC and MLQMC. Let us recall that 
the error is of order $2^{-2l}$ in the $L^2$ norm ($2^{-l}$ in the $H^1$ norm) when using 
linear finite elements of mesh size $h_l = 2^{-l}$. Therefore, in view
of the convergence rates for MC and QMC, the number of samples 
to be executed by these methods on a level $l$ to get the
same order of error $2^{-2l}$ is respectively given by 
\begin{equation}\label{eq:SampleNumbers}
N_{\text{MC},l} = 2^{4l}\quad\text{and}
\quad N_{\text{QMC},l} = 2^{2l}.
\end{equation}
Regarding the $H^1$--error, the number of samples 
to be executed on a level $l$ to get the
same order of error $2^{-l}$ is respectively given by
\begin{equation}\label{eq:SampleNumbersH1}
N_{\text{MC},l} = 2^{2l}\quad\text{and}
\quad N_{\text{QMC},l} = 2^{l}.
\end{equation}

\subsection{Scalar random diffusion for simple geometries and nested meshes}
In these first experiments, we consider a scalar, thus isotropic, 
random diffusion for the sake of simplicity. The experiments 
are conducted on the cube and the idealized ventricle geometry.
Note that the intergrid mesh transfer in case of the space--time 
dependent quantity of interest is straightforward as the meshes 
are nested.

\subsubsection{Cube geometry} 
We use a hierarchy of $L=6$ nested mesh 
levels. Starting from the finest level $l=L-1$, the coarser levels 
$l=0,1,\dots,L-2$ are successively obtained from the prior finer 
levels $l+1=1,2,\dots,L-1$ by uniformly coarsening in space and time.
The number of space-time degrees of freedom (DOF), the space 
and time discretization steps of all the different levels are 
reported in Table \ref{table:cube-levels}.

\begin{table}[h!]
\centering
\begin{tabular}{|c||l|l|l|l|l|l|} 
\hline $l$ & 0 &  1 & 2 & 3 & 4 & 5  \\
\hline
\hline DOF  & 16 & 256 & 4'096 & 65'536 & 1'048'576 & 16'777'216   \\
\hline
\hline $ h $  & 0.5 & 0.25 & 0.125 & 0.0625 & 0.03125 & 0.015625   \\
\hline $ \Delta t $  & 0.16 & 0.08 & 0.04 & 0.02 & 0.01 & 0.005   \\
\hline 
\end{tabular}
\caption{Details about the considered mesh hierarchy for the cube geometry.}
\label{table:cube-levels}
\end{table}

\paragraph{Controlled convergence of the over-all error.}
We intend to estimate and verify the convergence rate for 
the quadrature methods under consideration. The number of 
samples on each level is determined by the sampling strategy 
for controlling the error, cf.\ Section \ref{section:methods},
by using the sample numbers \eqref{eq:SampleNumbers} 
and \eqref{eq:SampleNumbersH1}. We  report in Figure 
\ref{fig::controlled-convergence-cube} the convergence of 
the error in $L^2$ and $H^1$ norms. 

The plots show that we recover the expected convergence 
rates of the general error for all quadrature methods tested. Note that this
does not imply that these quadrature methods are all equally efficient,
but rather that they yield the same precision with a vastly different balancing 
of samples on every level. This is demonstrated quite clearly in the corresponding
work comparison plot found in Figure \ref{fig::asymptotical-work-cube}.

\begin{figure}[H] 
\centering
\includegraphics[width=0.45\textwidth]{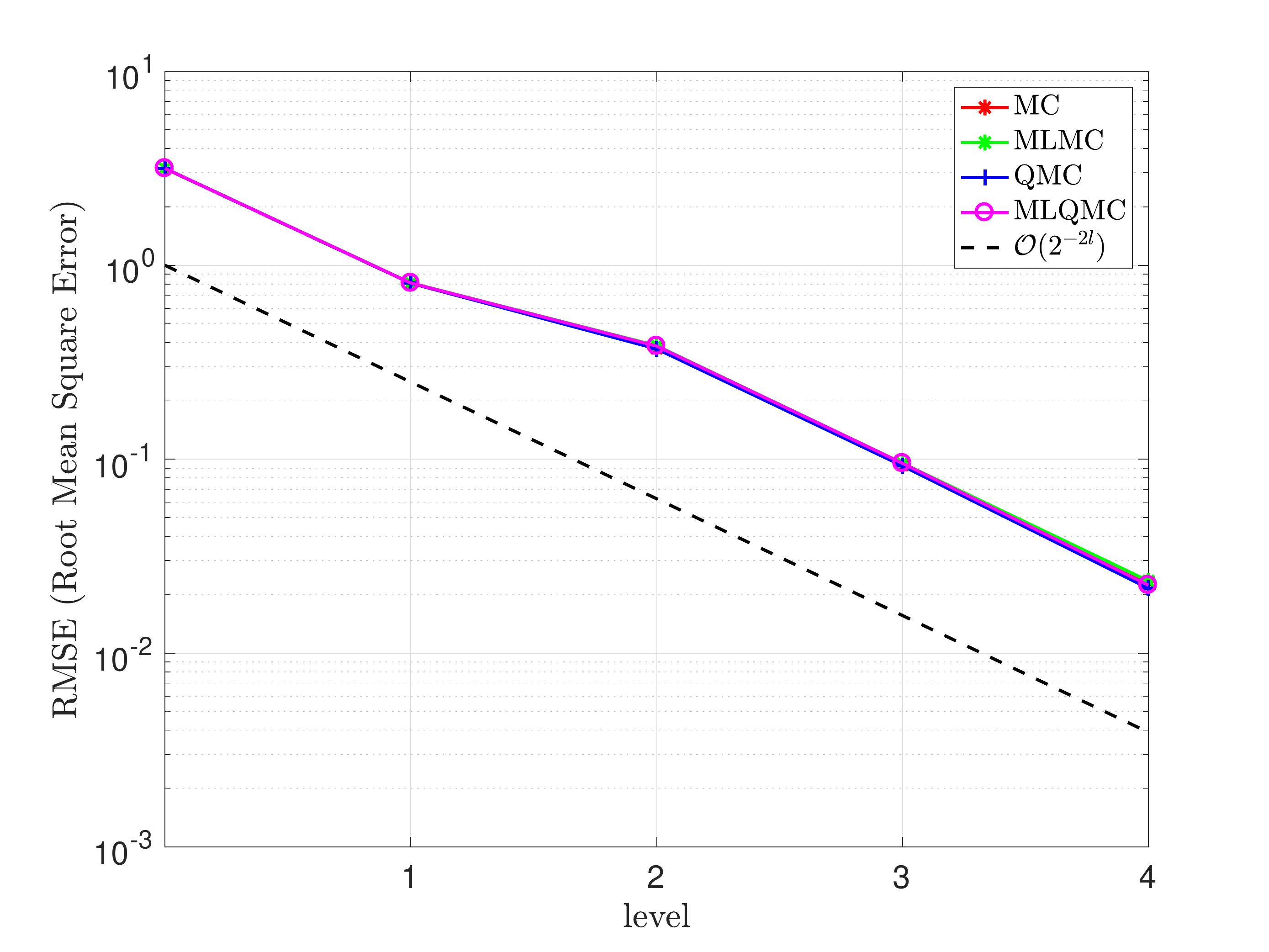}
\includegraphics[width=0.45\textwidth]{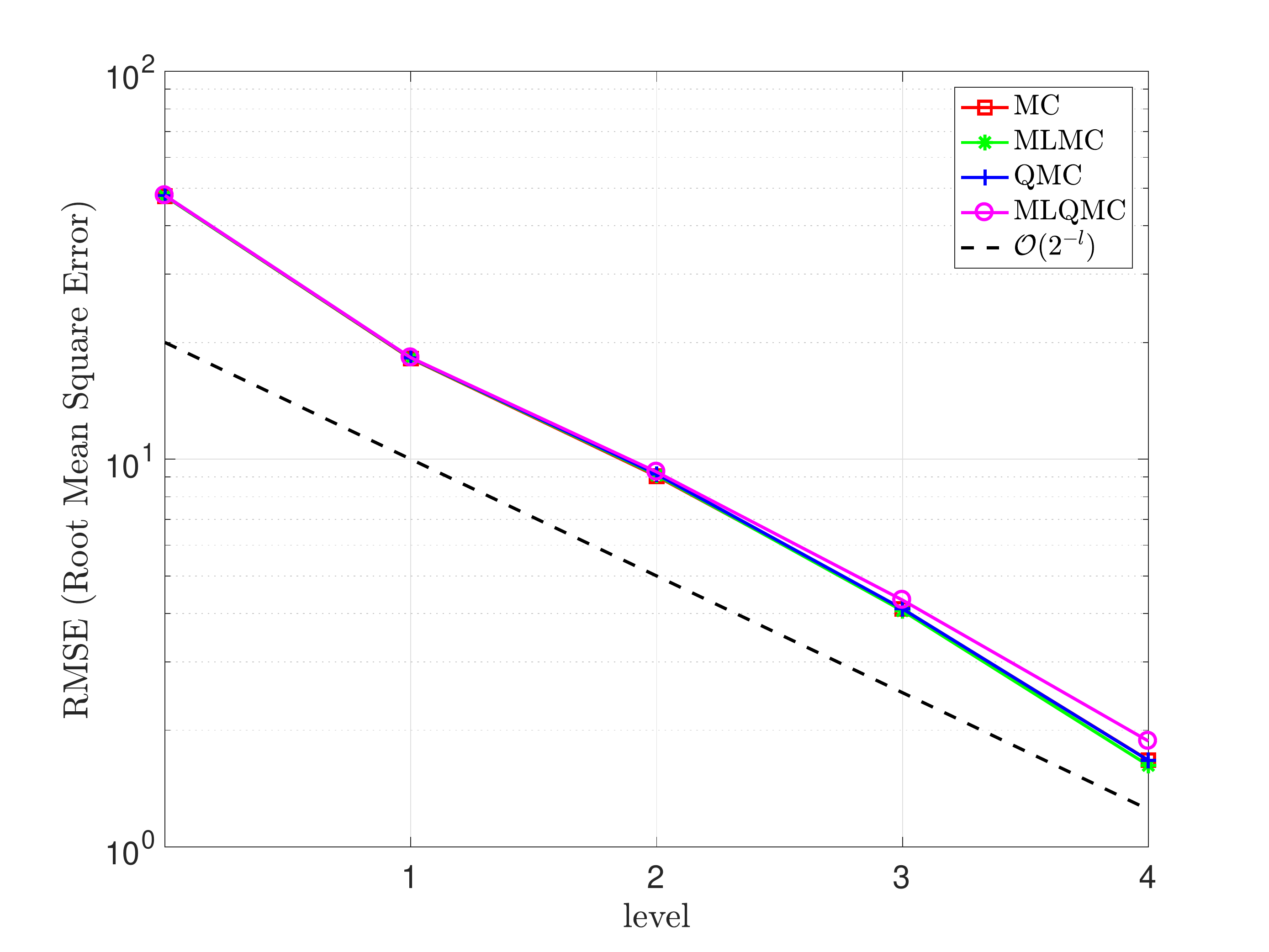}
\caption{Convergence rate for the cube in $L^2$ (left) and $H^1$ (right) norms.}
\label{fig::controlled-convergence-cube}
\end{figure}

\paragraph{Asymptotical work behaviour.}
Equivalently to the controlled convergence concept with the previously 
introduced sampling strategy, we would like now to study the work in 
the context of the controlled error. To this end, we assume that the
cost per solve on level $l$ is given by $C_{\text{FE},l} = 2^{\gamma d l}$, 
where $\gamma$ is the complexity of the finite element solver used 
and $d$ is the dimension of the physical problem considered (here $d=4$). 
In view of \eqref{eq:SampleNumbers}, we can hence recover the 
total amount of work required by MC and QMC given a 
discretization level $L$ with \eqref{eq:SL} by
\[
W_{\text{MC},L} = C_{\text{FE},L} N_{\text{MC},L}
= 2^{\gamma d L} 2^{4L} = 2^{ (\gamma d + 4) L }
\]
and 
\[
W_{\text{QMC},L} = C_{\text{FE},L} N_{\text{QMC},L}
= 2^{\gamma d L} 2^{2L} = 2^{ (\gamma d + 2) L } . 
\]

The total work for the multilevel pendants with 
$L$ discretization levels can also be deduced from 
\eqref{eq:SampleNumbers} with \eqref{eq:StandardML}, and we can write 
\[
    W_{\text{MLMC},L} = \sum_{l=1}^{L} C_{\text{FE},L} N_{\text{MC},L-l} 
    = \sum_{l=1}^{L} 2^{\gamma d l} 2^{4(L-l)} =  2^{4L} \sum_{l=1}^{L} 2^{ (\gamma d - 4) l}
\]
and 
\[
    W_{\text{MLQMC},L} = \sum_{l=1}^{L} C_{\text{FE},L} N_{\text{QMC},L-l} 
    = \sum_{l=1}^{L} 2^{\gamma d l} 2^{2(L-l)} =  2^{2L} \sum_{l=1}^{L} 2^{ (\gamma d - 2) l}.
\]
These can further be reformulated as
\[
     W_{\text{MLMC},L} = 
    \begin{cases}
      L 2^{4L}  \text{  if  }  \gamma d = 4 , \\
      \dfrac{ 2^{\gamma d L} - 2^{4L}  }{2^{\gamma d - 4} - 1} \text{if  } \gamma d \neq  4 ,
    \end{cases}
\]
and
\[
     W_{\text{MLQMC},L} = 
    \begin{cases}
      L 2^{2L}  \text{  if  }  \gamma d = 2 , \\
      \dfrac{ 2^{\gamma d L} - 2^{2L}  }{2^{\gamma d - 2} - 1} \text{if  } \gamma d \neq  2 .
    \end{cases}
\]
Therefore, the asymptotical work behaviour for MLMC is bounded by 
$\mathcal{O}(2^{4L})$ when $\gamma d < 4$ (with an additional log-factor 
if $\gamma d = 4$), and by $ \mathcal{O}(2^{\gamma d L }) $ if $ \gamma d > 4$. 
Likewise, for MLQMC, the asymptotical work is bounded by $\mathcal{O}(2^{2L})$ 
when $\gamma d < 2$ (with an additional log-factor if $\gamma d = 2$), and 
by $ \mathcal{O}(2^{\gamma d L }) $ if $ \gamma d > 2$.

The complexity parameter $\gamma$ is therefore of major importance 
in the asymptotical work behaviour of the considered quadrature methods. 
In our case, given the Newton initial guess strategy, the solver preconditioning 
and the difference in parallel resources used from one level to another do not 
allow to give this parameter a concise value over all levels (compare Section 
\ref{sec:FEM}). We suggest however to evaluate the work in terms of total 
execution time, in which the cost $C_{\text{FE},l}$ for solving a sample at level 
$l$ is given by the time to solution (averaged over 100 samples). The 
resulting plot of work comparison between the different methods is 
reported in Figure \ref{fig::asymptotical-work-cube}.
On the on hand it is clearly visible that both MLMC and MLQMC show a significantly
improved asymptotic efficiency compared to their single level counterpart.
On the other hand comparing MLMC with MLQMC shows that MLQMC seems to be able to
use the higher convergence order of QMC versus MC to achieve
an improved asymptotic efficiency over MLMC.

\begin{figure}[H] 
\centering
\includegraphics[width=0.6\textwidth]{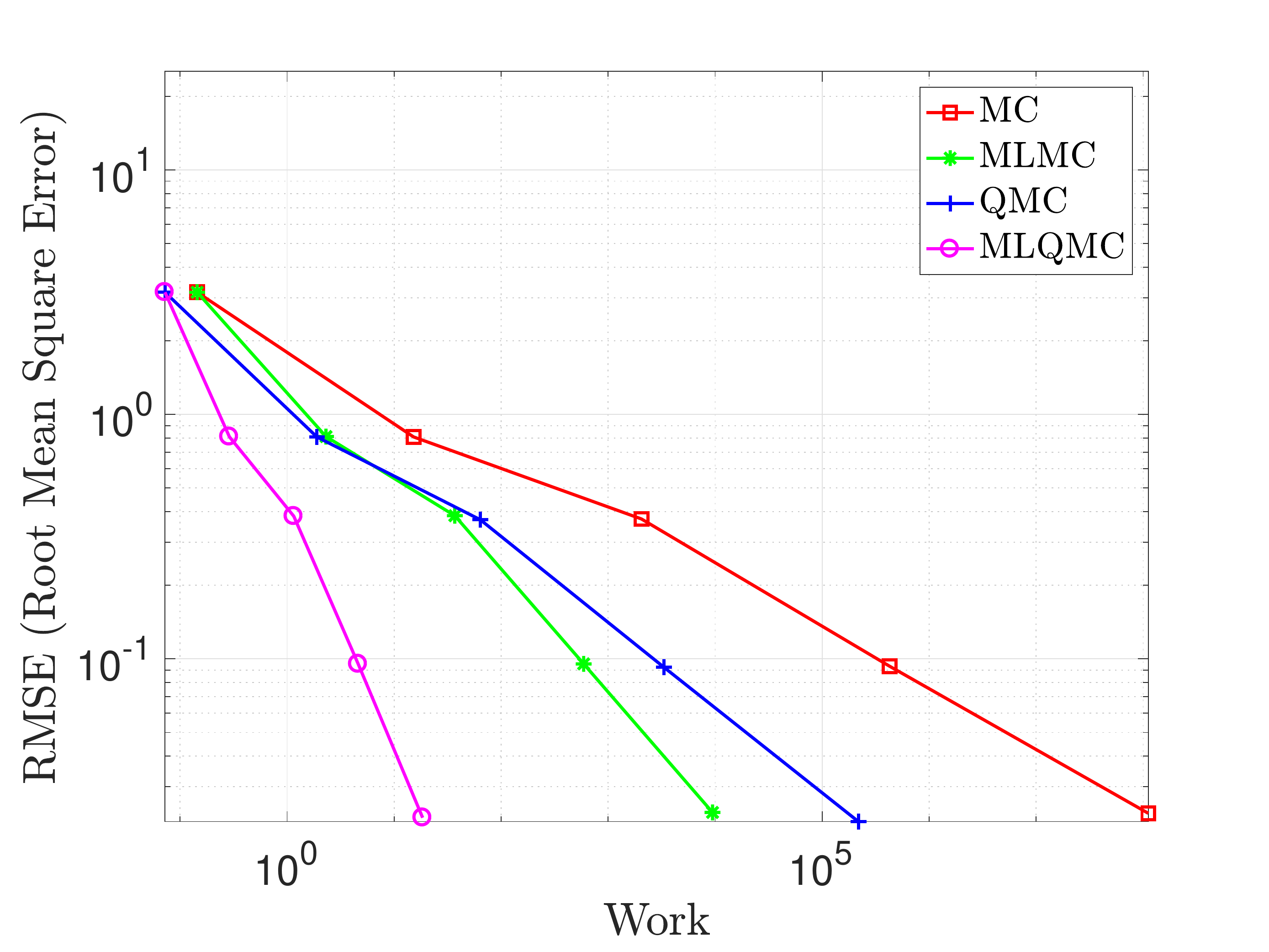}
\captionsetup{justification=centering}
\caption{Work comparison for MC/MLMC/QMC/MLQMC. 
Work is computed on the basis of execution time on a single thread for all levels.}
\label{fig::asymptotical-work-cube}
\end{figure} 

\subsubsection{Idealized ventricle}
In the second test case, we rely on a mesh hierarchy of $L=3$ 
levels. The main reason for the limitation of levels number for this 
geometry is essentially due to the nestedness condition. Indeed, 
for this geometry as opposed to the simple cube one, we proceed 
in an inverted way, i.e.\ refining a given initial mesh. This procedure 
becomes demanding at the memory level very quickly since the 
refinement step increases the degrees of freedom by the factor 
$2^4 = 16$ due to the space--time discretization. In general, 
this limitation can very often be encountered when dealing with 
nested meshes for realistic geometries. This is the main motivation 
for relying on non-nested meshes for the last test case,
see Subsection \ref{subsection:anisotropic}.

The number of space-time degrees of freedom (DOF), the space 
and time discretization steps of the three different levels are 
reported in Table \ref{table:ventricle-levels}. The meshes 
are visualized in Figure \ref{fig::mesh-hierarchy-ventricle}.
The rates of convergence and the work of the different quadrature
methods are found in Figure \ref{fig::controlled-convergence-ventricle}.
As we have only three levels, the meaningfulness of the results is 
limited as it is impossible to conclude the asymptotic behaviour. 
Nevertheless, it is clearly seen that MLQMC is superior over the 
other methods.

\begin{table}[h!]
\centering
\begin{tabular}{|c||l|l|l|} 
\hline $l$ & 0 &  1 & 2   \\
\hline
\hline DOF  & 154'546 & 2'120'420 & 31'184'747   \\
\hline 
\hline 
$h$  & 0.1 & 0.05 & 0.025   \\
\hline
\hline
$\Delta t$  & 0.02 & 0.01 & 0.005   \\
\hline
\end{tabular}
\caption{Details about the mesh hierarchy for the idealized ventricle geometry.}
\label{table:ventricle-levels}
\end{table}

\begin{figure}[H] 
\centering
\includegraphics[width=0.3\textwidth]{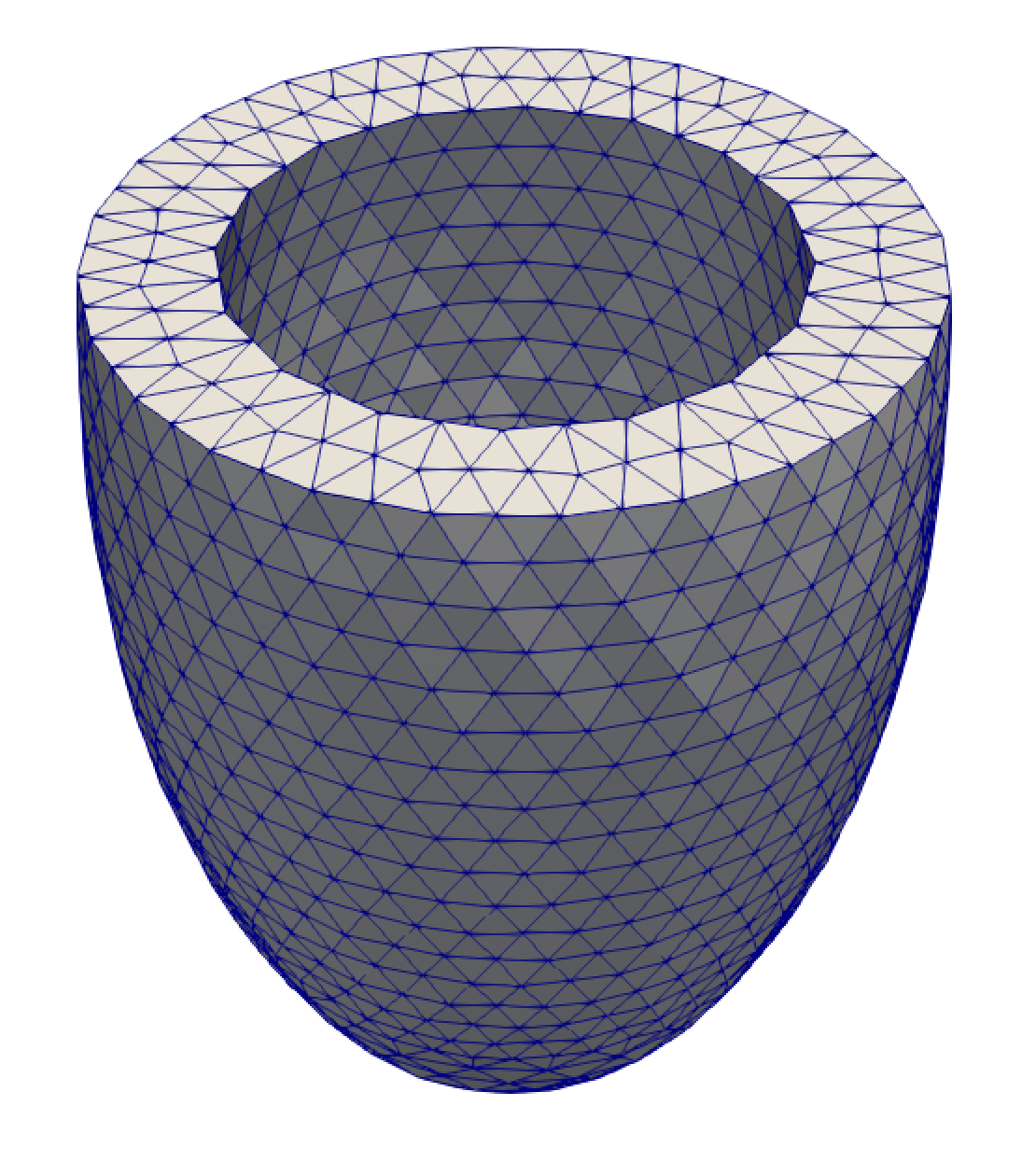}
\includegraphics[width=0.3\textwidth]{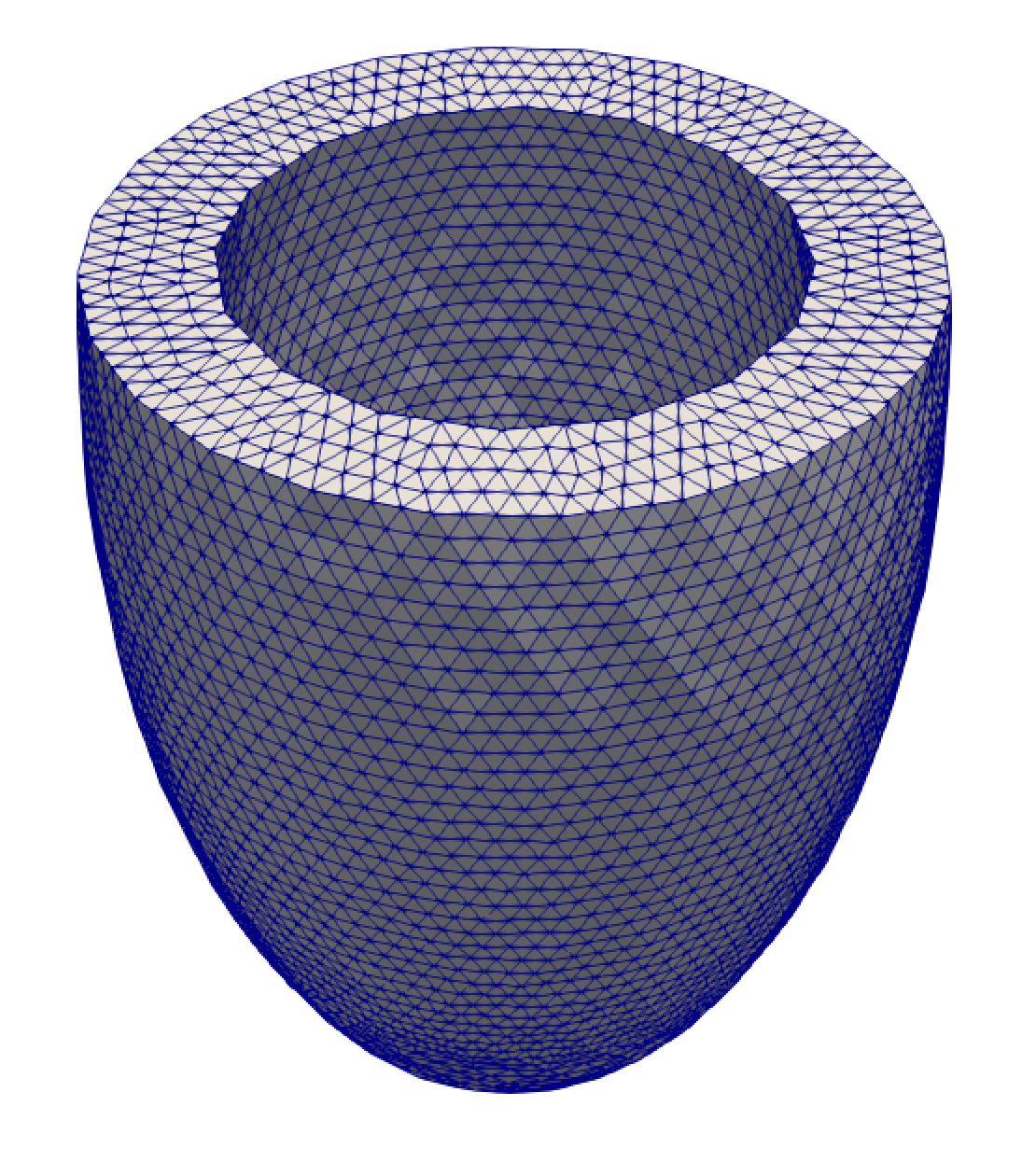}
\includegraphics[width=0.3\textwidth]{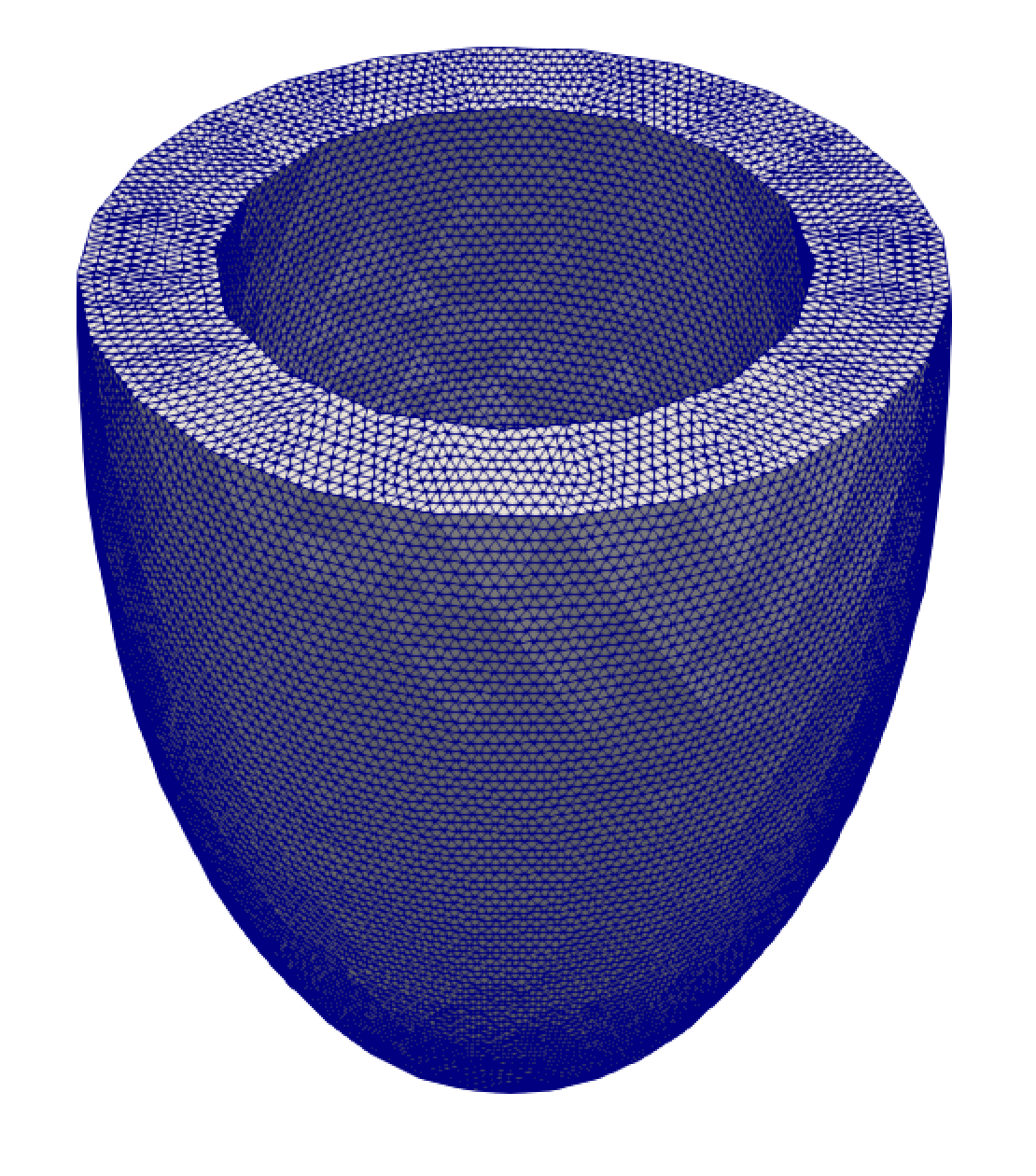}
\caption{Nested mesh hierarchy for the idealized ventricle.}
\label{fig::mesh-hierarchy-ventricle}
\end{figure} 

\begin{figure}[H] 
\centering
\includegraphics[width=0.45\textwidth]{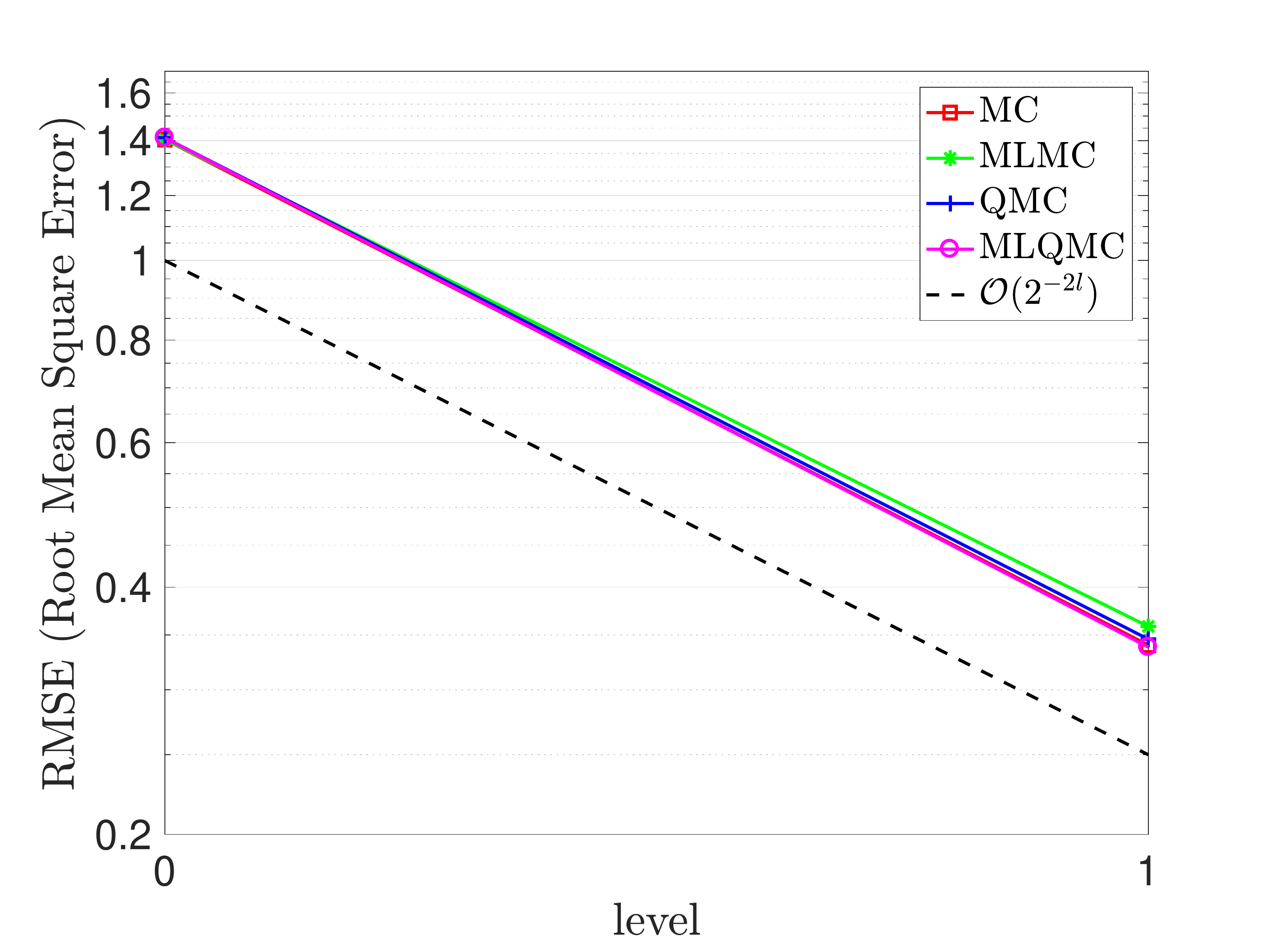}
\includegraphics[width=0.45\textwidth]{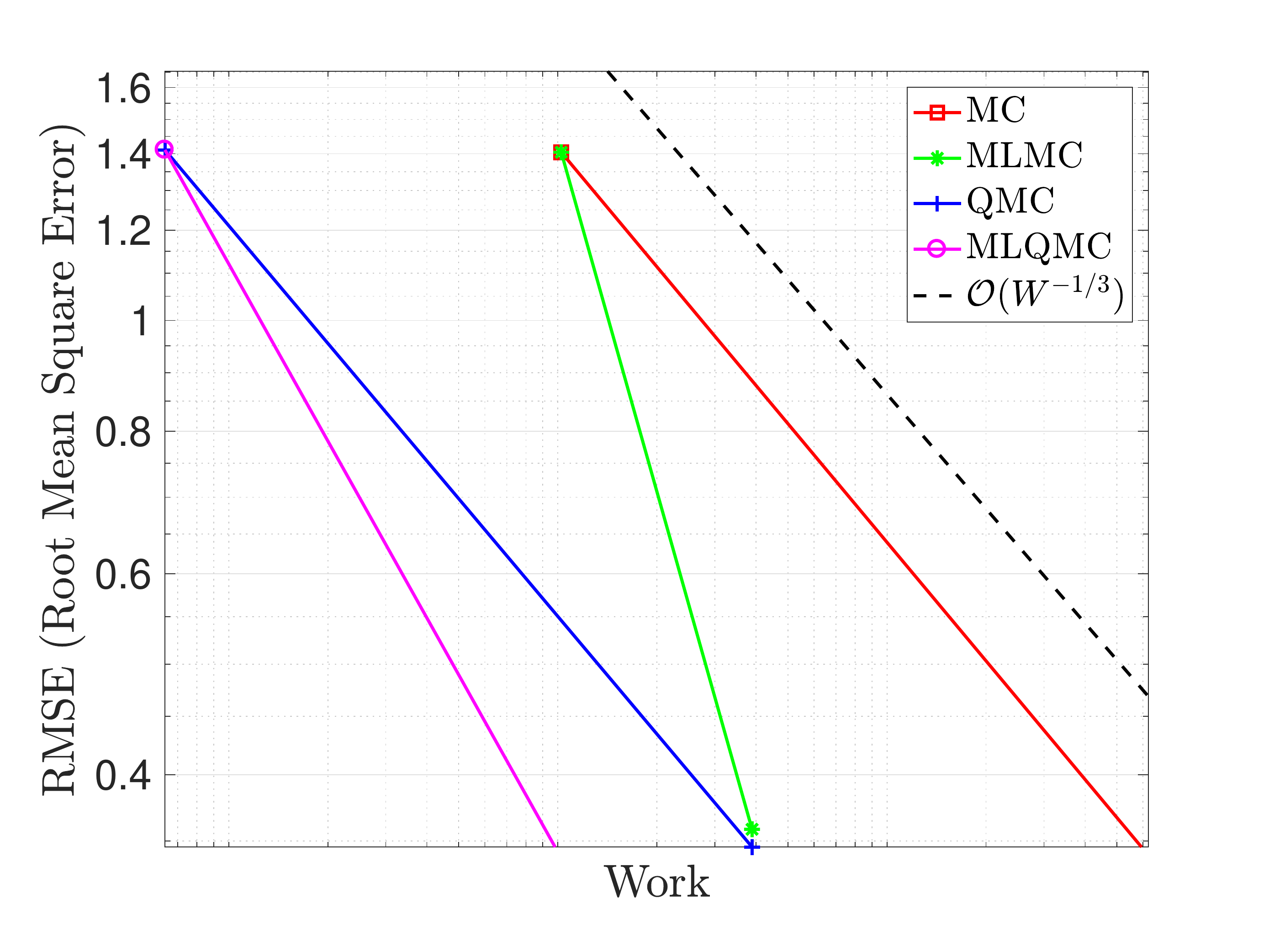}
\captionsetup{justification=centering}
\caption{Convergence rate for the idealized ventricle in the $L^2$ norm (left)
and work comparison (right).}
\label{fig::controlled-convergence-ventricle}
\end{figure} 

\subsection{Random fibers in a complex geometry with non-nested meshes} 
\label{subsection:anisotropic}
The last test case concerns a realistic heart geometry with 
data acquired from clinical measurements. As this is meant to be 
the synthesis of this work, we also account for anisotropic 
diffusion defined in \eqref{eq:AdmV}. The associated expected 
fiber field $\mathbb{E}[{\bs V}] ({\bs x})$ is shown in Figure 
\ref{fig::initial-state-fibers}. It is obtained from a mathematical 
reconstruction using transmural coordinates~\cite{potse2006comparison}. 
The transmural coordinates are derived by initially solving a diffusion 
problem with adapted boundary conditions at the contour of the left 
and right ventricles~\cite{bayer2012novel}.

\begin{figure}[htb]
\centering
\includegraphics[width=0.45\textwidth]{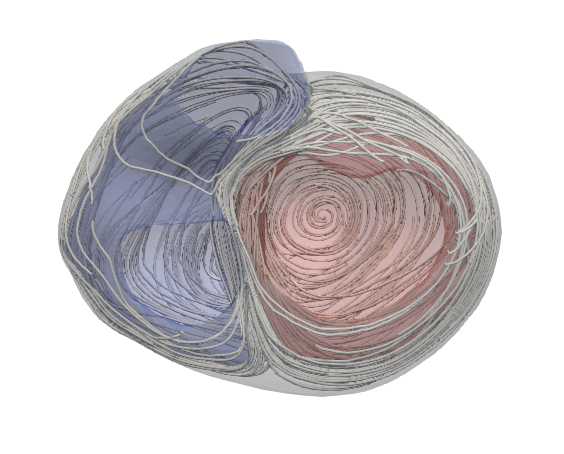}
\includegraphics[width=0.45\textwidth]{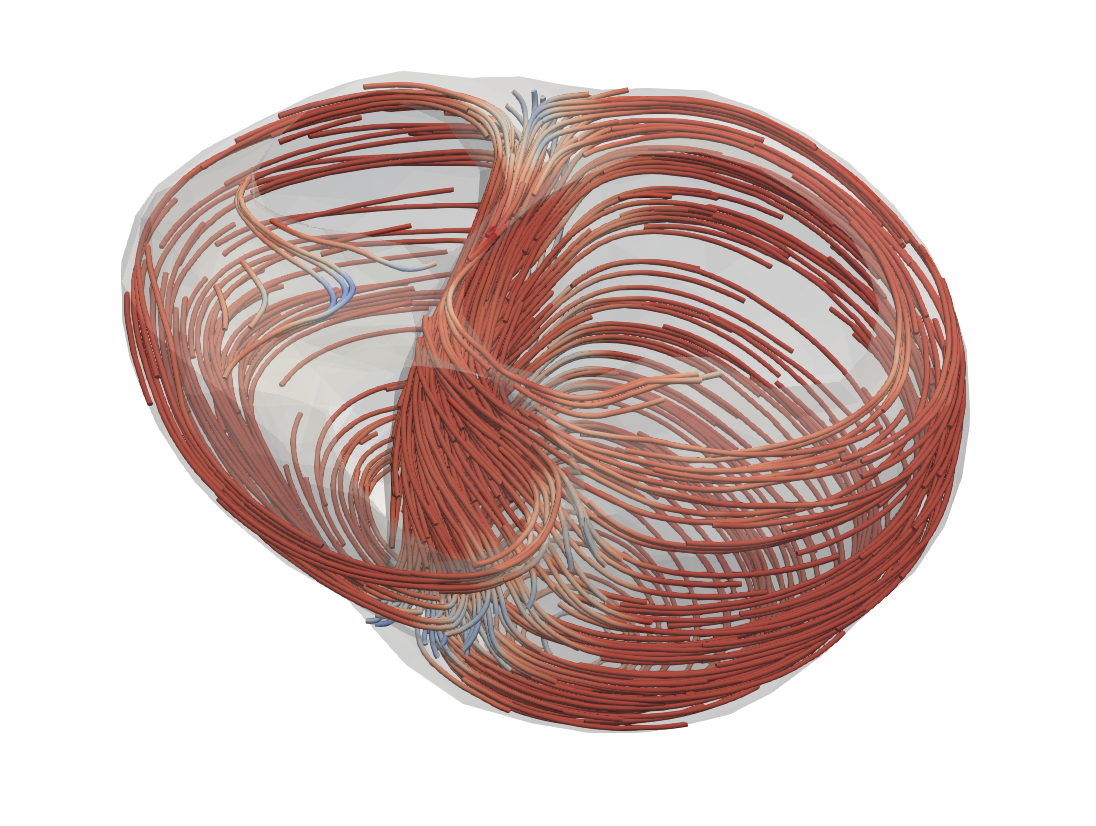}
\caption{Initial state for fibers $\mathbb{E}[{\bs V}] ({\bs x})$.}
\label{fig::initial-state-fibers}
\end{figure} 
We also relax the nestedness condition by considering a hierarchy of non-nested 
meshes. As we have previously argued, the nestedness condition 
very quickly becomes a burden in considering a large number of levels. 
We rely on a mesh hierarchy with 6 levels in this example. They are shown in Figure 
\ref{fig::mesh-hierarchy-heart}. The details on the space-time DOF 
and discretization steps are reported in Table \ref{table:heart-levels}.

\begin{figure}[H] 
\centering
\includegraphics[width=0.3\textwidth]{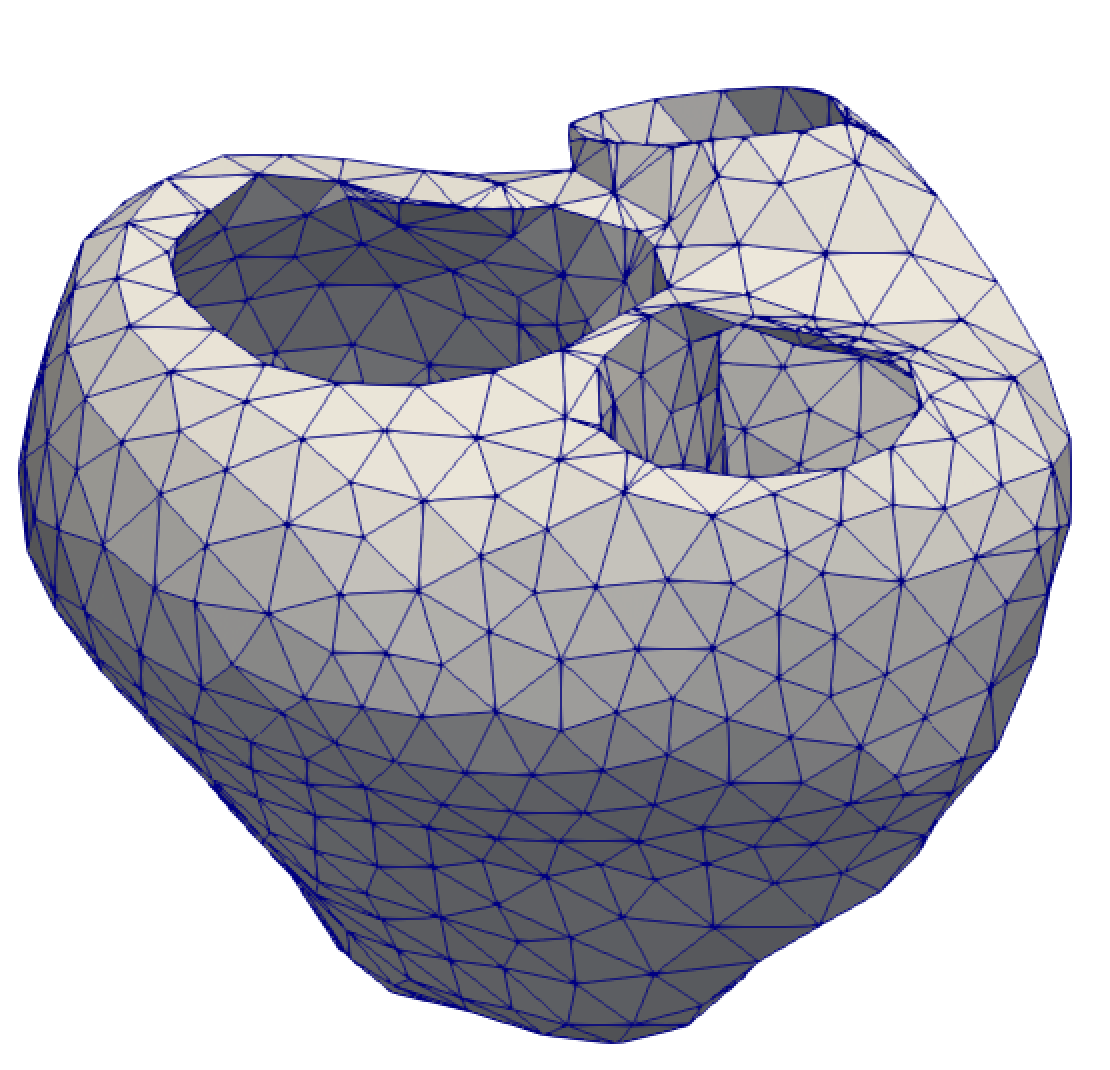}
\includegraphics[width=0.3\textwidth]{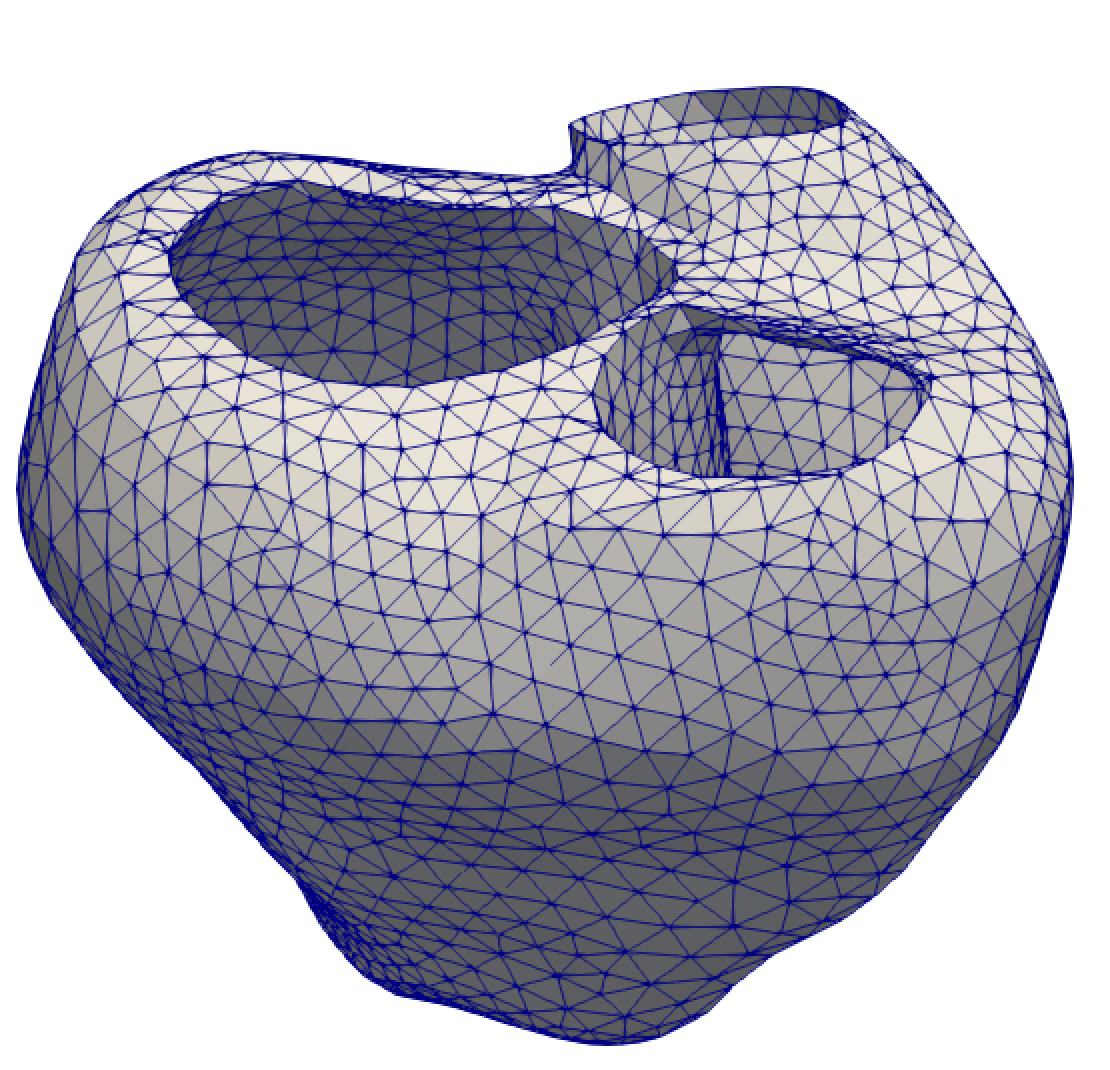}
\includegraphics[width=0.3\textwidth]{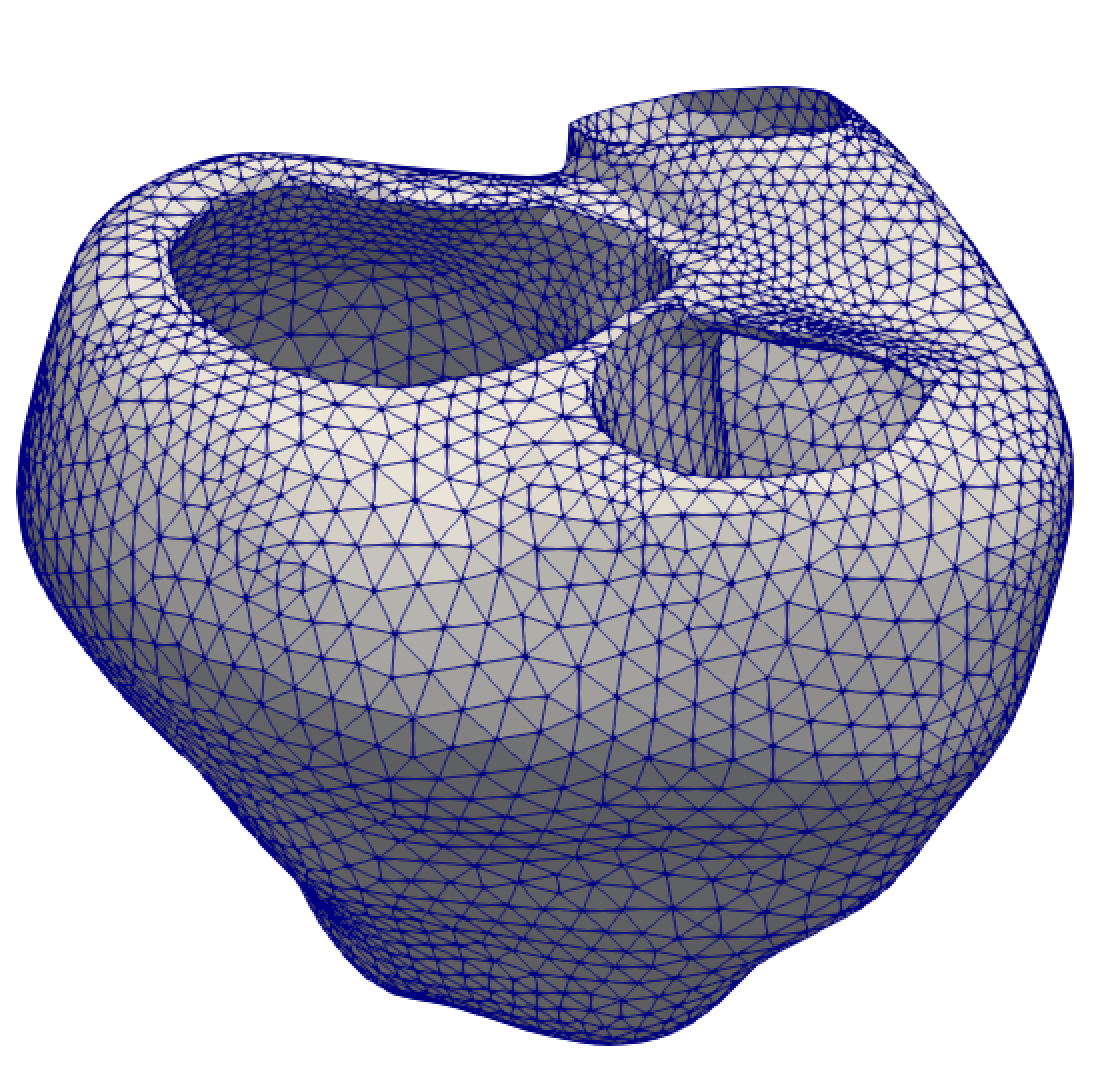}
\includegraphics[width=0.3\textwidth]{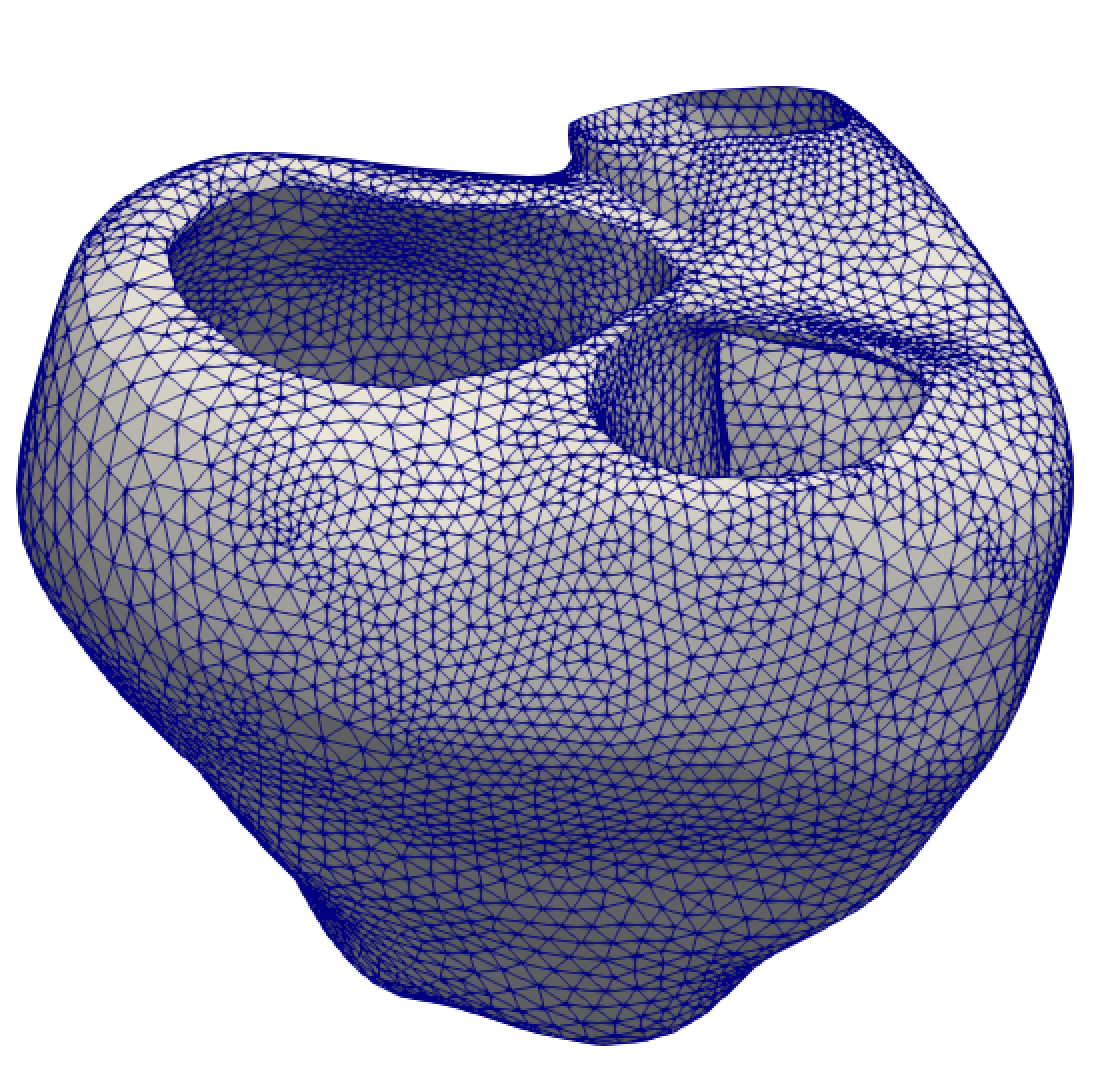}
\includegraphics[width=0.3\textwidth]{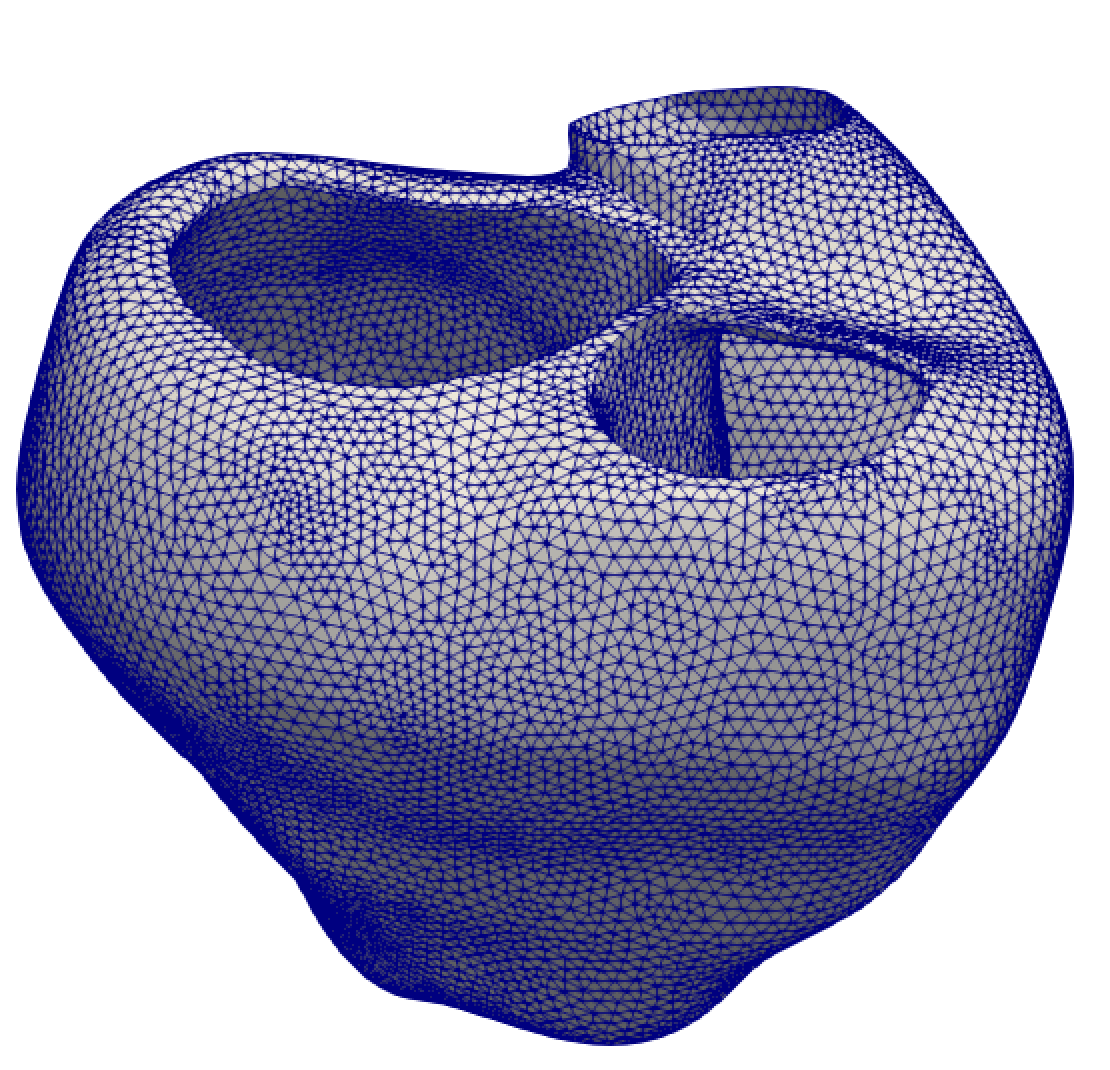}
\includegraphics[width=0.3\textwidth]{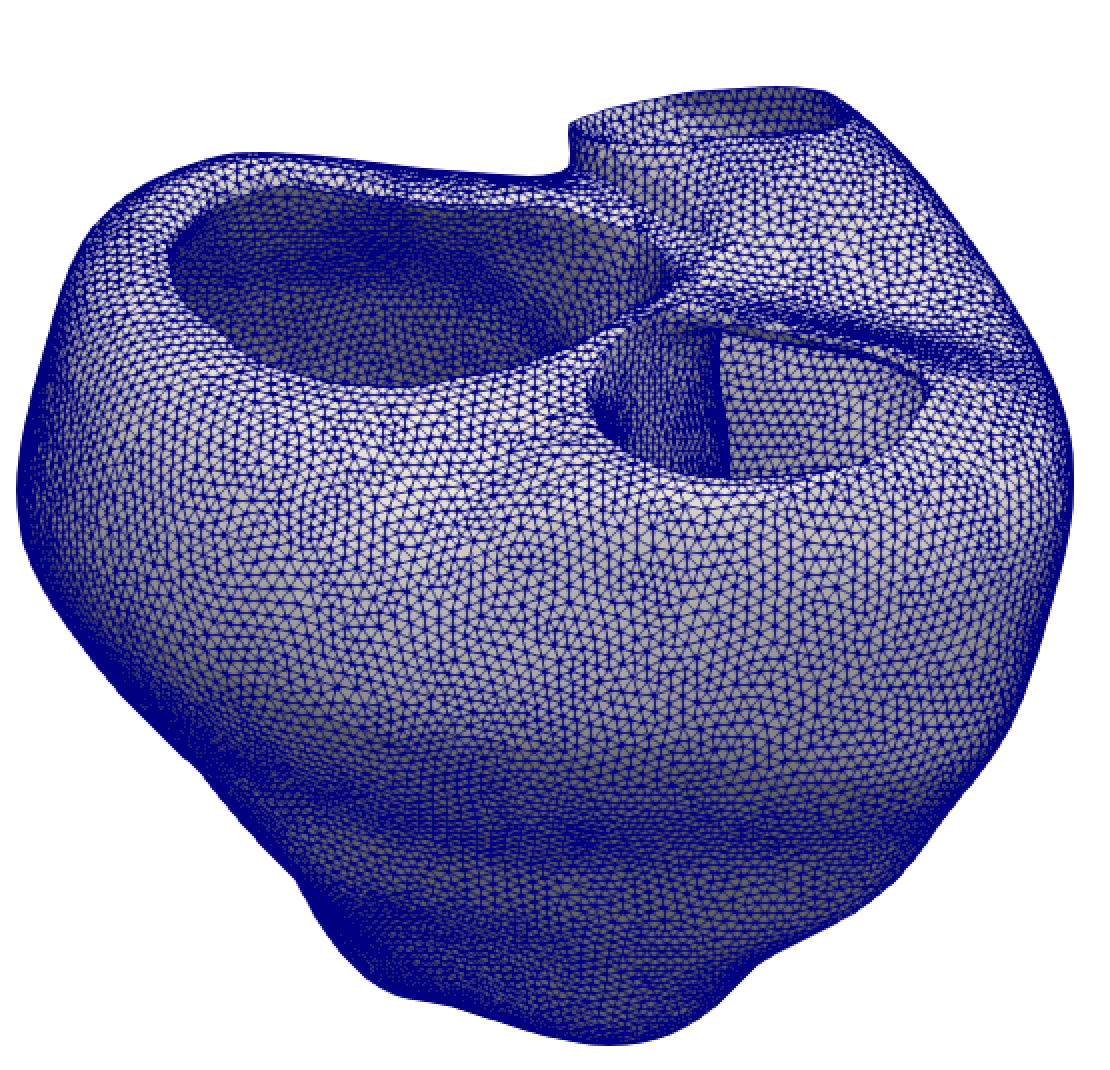}
\caption{Non-nested mesh hierarchy for the realistic heart geometry.}
\label{fig::mesh-hierarchy-heart}
\end{figure} 

\begin{table}[h!]
\centering
\begin{tabular}{|c||l|l|l|l|l|l|} 
\hline $l$ & 0 &  1 & 2 & 3 & 4 & 5  \\
\hline
\hline DOF  & 18'480 & 113'312 & 583'104 & 1'740'800 & 8'777'728 & 34'894'848   \\
\hline
\hline $ h $  & 0.16 & 0.08 & 0.04 & 0.03 & 0.02 & 0.01   \\
\hline $ \Delta t $  & 0.16 & 0.08 & 0.04 & 0.02 & 0.01 & 0.005   \\
\hline 
\end{tabular}
\caption{Details about the mesh hierarchy for the realistic heart geometry.}
\label{table:heart-levels}
\end{table}

As we do not have nested finite element spaces, we rely here
on the multilevel estimator \eqref{eq:PimpedML}. Moreover, 
we evaluate the convergence for the two quantities of interest, 
namely the action potential and the activation times for given 
locations in the domain.


\subsubsection{Action potential}
We evaluate the evolution of the action potential in several 
locations of the heart domain. The first example considers a 
set of points that are placed along the wall separating the left 
and right ventricles. These points are shown in Figure \ref{fig::wall-points}. 

\begin{figure}[h!]
\centering
\begin{tikzpicture}
\draw(0,0)node{\includegraphics[width=0.49\textwidth]{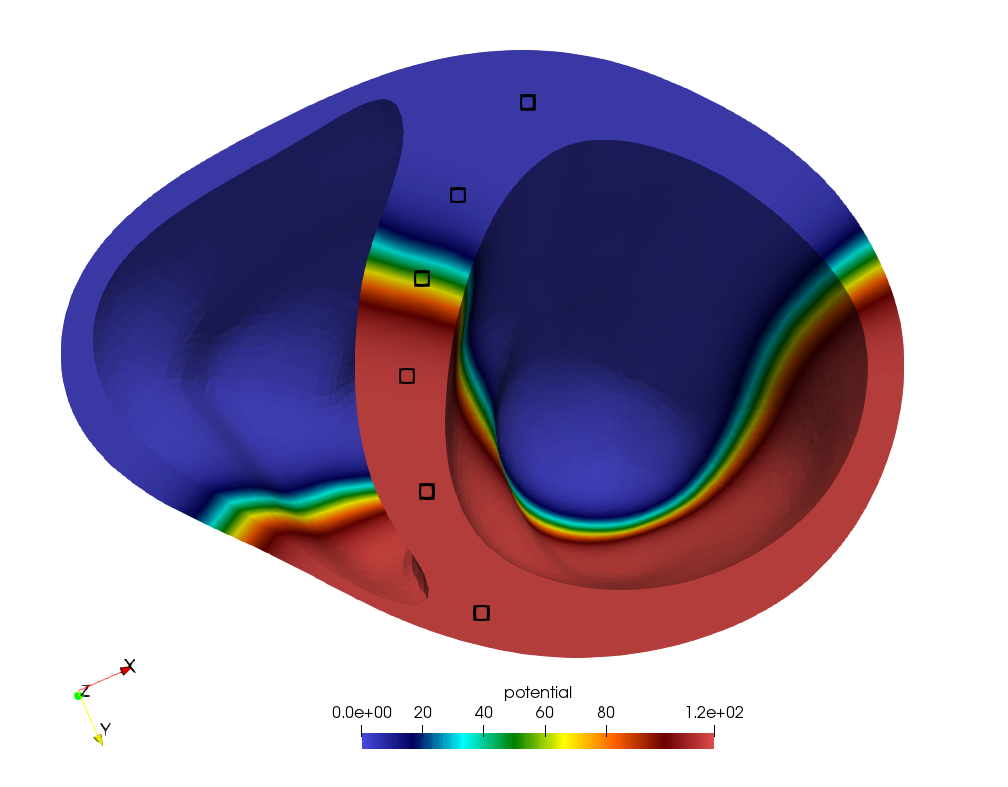}};
\draw[line width=0.8pt] (-0.12,-1.53)--(3.6,-1.53);
\draw[line width=0.8pt] (-0.52,-0.66)--(3.6,-0.66);
\draw[line width=0.8pt] (-0.64,0.16)--(3.6,0.16);
\draw[line width=0.8pt] (-0.55,0.86)--(3.6,0.86);
\draw[line width=0.8pt] (-0.27,1.46)--(3.6,1.46);
\draw[line width=0.8pt] (0.14,2.12)--(3.6,2.12);
\draw (3.8,-1.53)node{P1};
\draw (3.8,-0.66)node{P2};
\draw (3.8,0.16)node{P3};
\draw (3.8,0.86)node{P4};
\draw (3.8,1.46)node{P5};
\draw (3.8,2.12)node{P6};
\end{tikzpicture}

\caption{Locations selected along the wall separating the left and right ventricles.}
\label{fig::wall-points}
\end{figure} 

As one can see, these points have been selected such that they 
trace the behaviour of locations at different distance from the stimulus 
center, starting from very close (the very below point) to relatively 
far (the very top point). We report in Figure~\ref{fig::AP-wall} the 
action potential obtained by the MC quadrature method for different 
discretization levels.

\begin{figure}[H] 
\begin{tikzpicture}
\draw(0,0)node{
\includegraphics[width=0.3\textwidth]{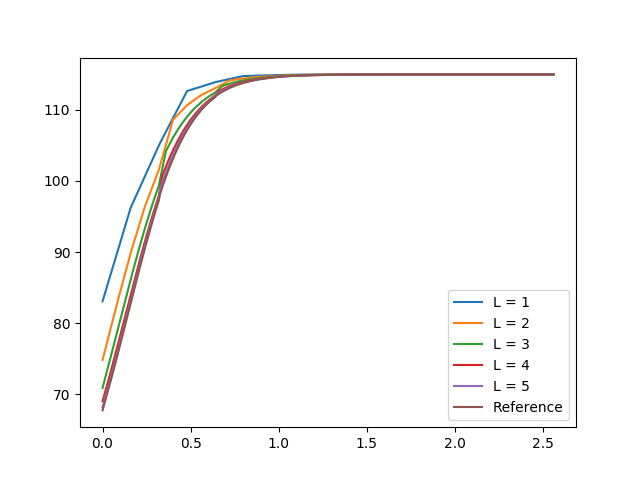}};
\draw(5,0)node{
\includegraphics[width=0.3\textwidth]{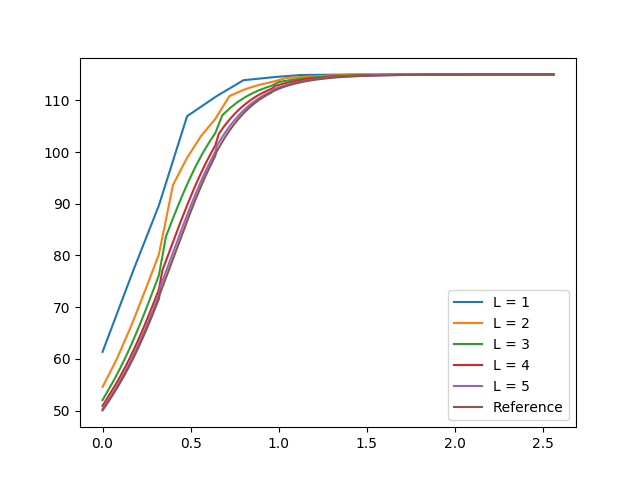}};
\draw(10,0)node{
\includegraphics[width=0.3\textwidth]{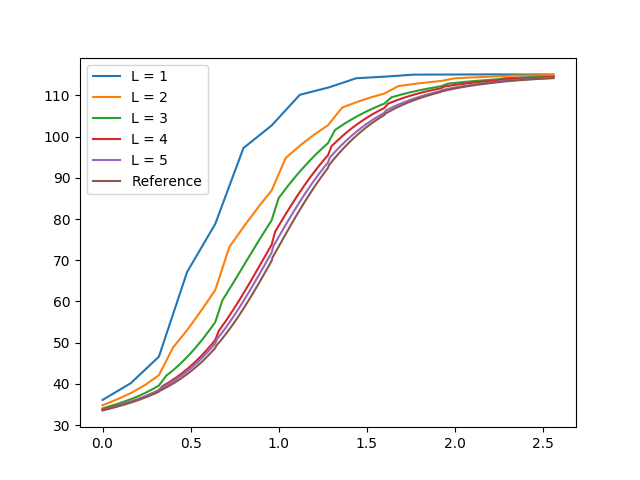}};
\draw(0,-3.5)node{
\includegraphics[width=0.3\textwidth]{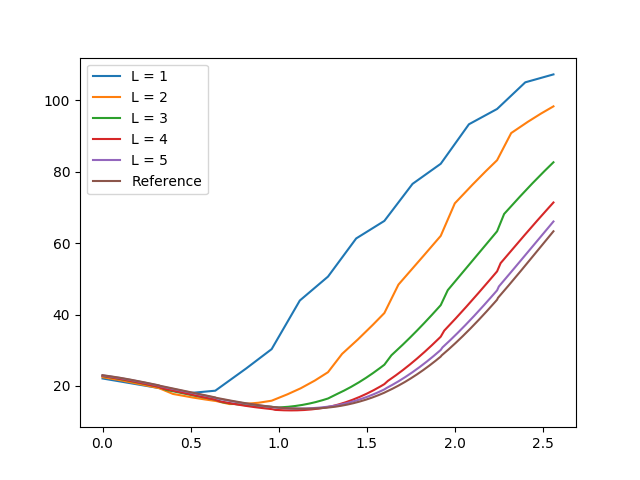}};
\draw(5,-3.5)node{
\includegraphics[width=0.3\textwidth]{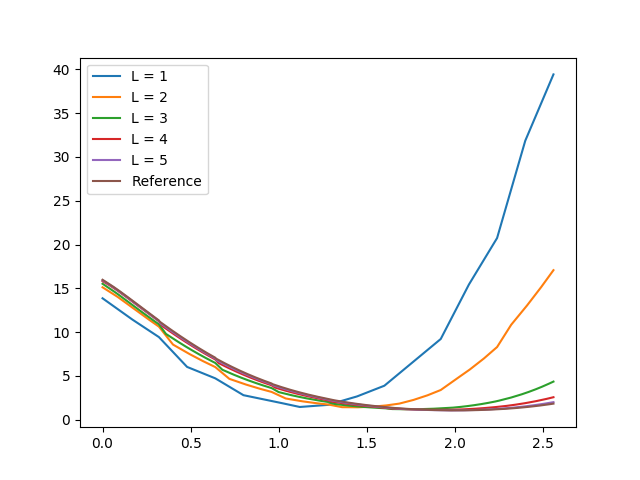}};
\draw(10,-3.5)node{
\includegraphics[width=0.3\textwidth]{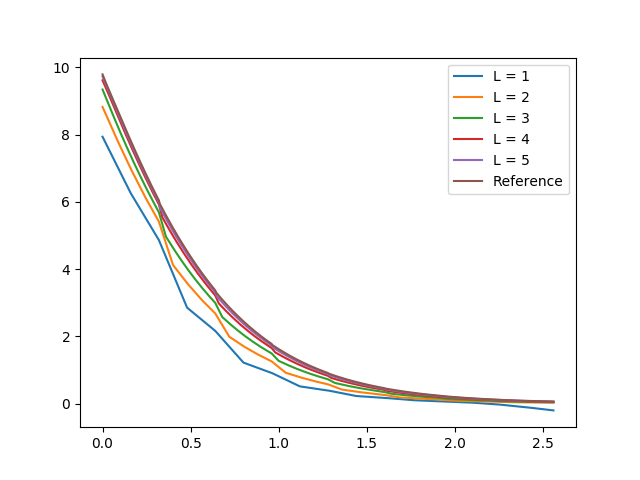}};
\draw(0,1.5)node{P1};
\draw(5,1.5)node{P2};
\draw(10,1.5)node{P3};
\draw(0,-2)node{P4};
\draw(5,-2)node{P5};
\draw(10,-2)node{P6};
\end{tikzpicture}
\captionsetup{justification=centering}
\caption{Action potential behaviour given different mesh level discretizations 
for the points specified in Figure \ref{fig::wall-points} following the order going from the bottom to the top.}
\label{fig::AP-wall}
\end{figure} 

In Figure \ref{fig::AP-convergence-1}, we report the convergence graphs 
of the (pointwise) error \eqref{eq:point-error} for the action potential 
at the locations introduced in Figure \ref{fig::wall-points} and $q=1$.
Notice that the graphs report the root mean square errors.
The expected convergence rate is achieved for all quadrature methods tested.

\begin{figure}[H] 
\centering
\includegraphics[width=0.3\textwidth]{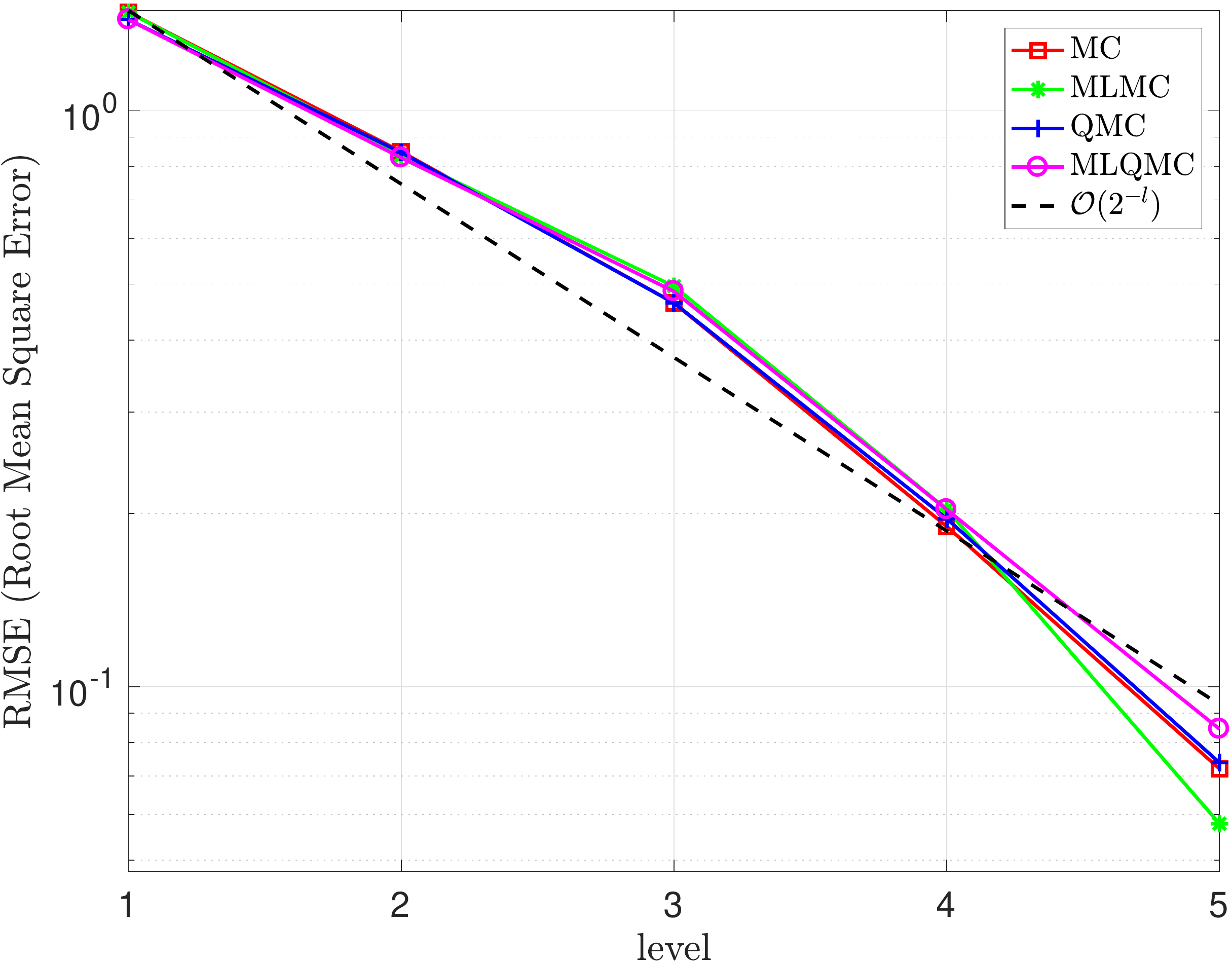}
\includegraphics[width=0.3\textwidth]{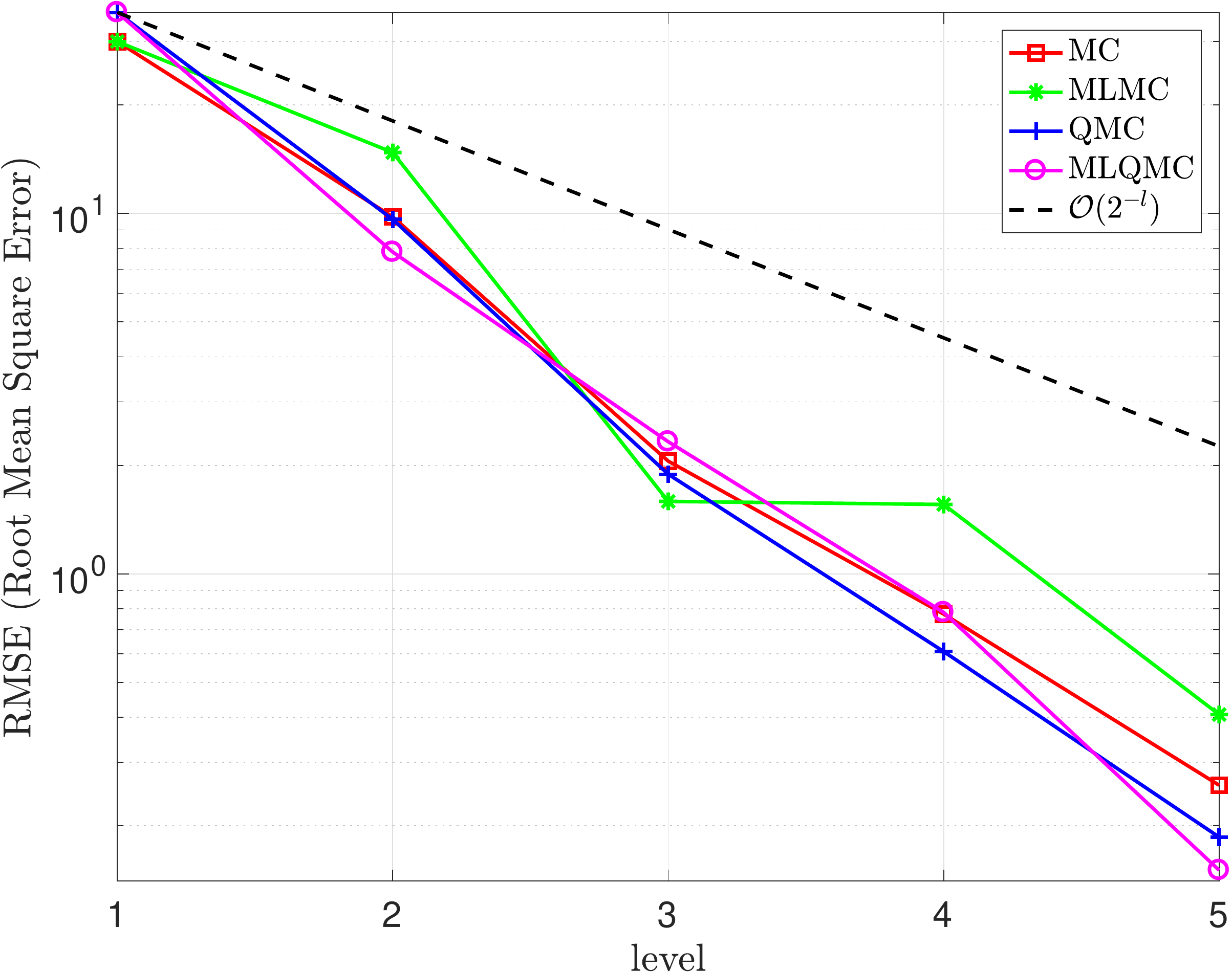}
\includegraphics[width=0.3\textwidth]{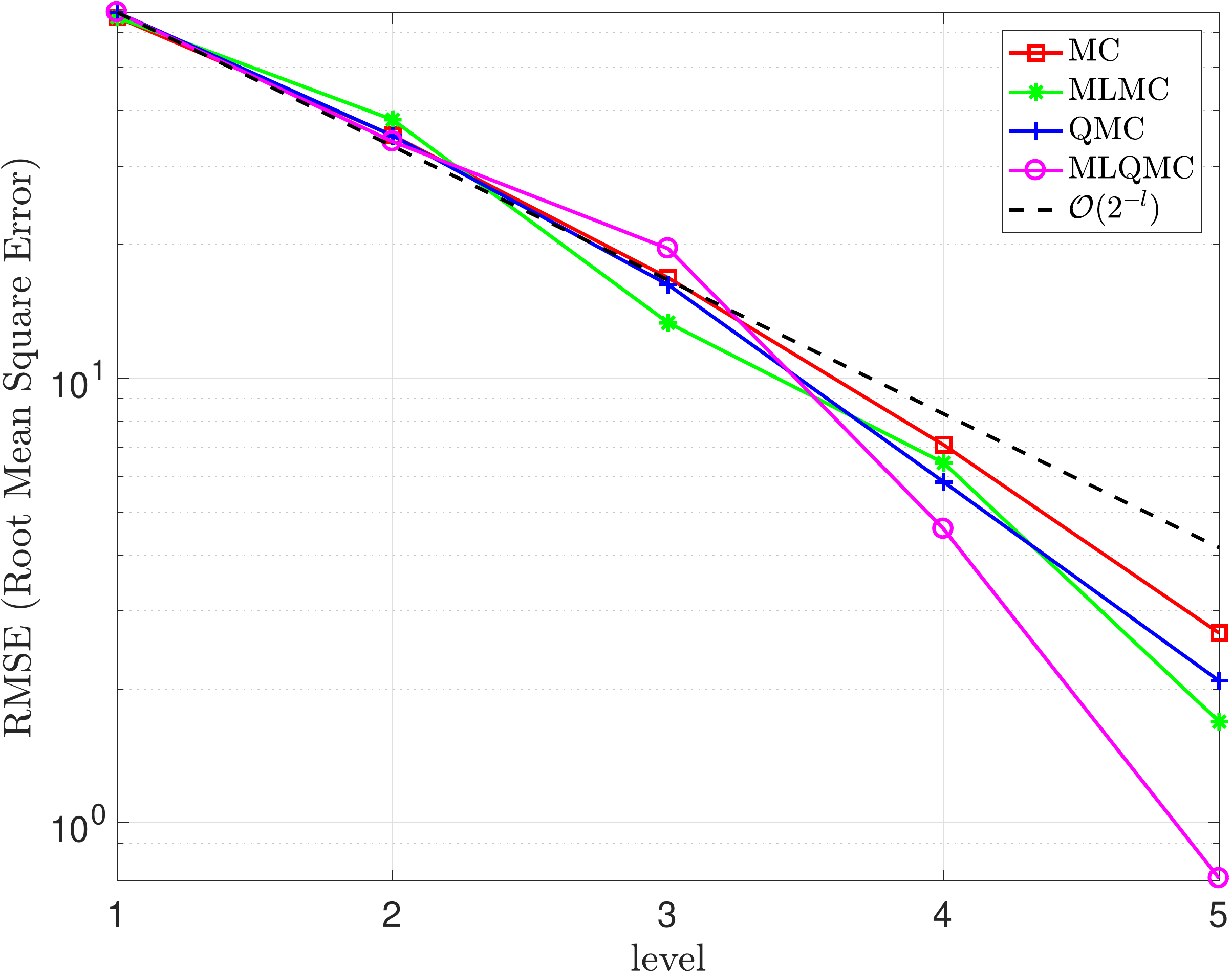}
\includegraphics[width=0.3\textwidth]{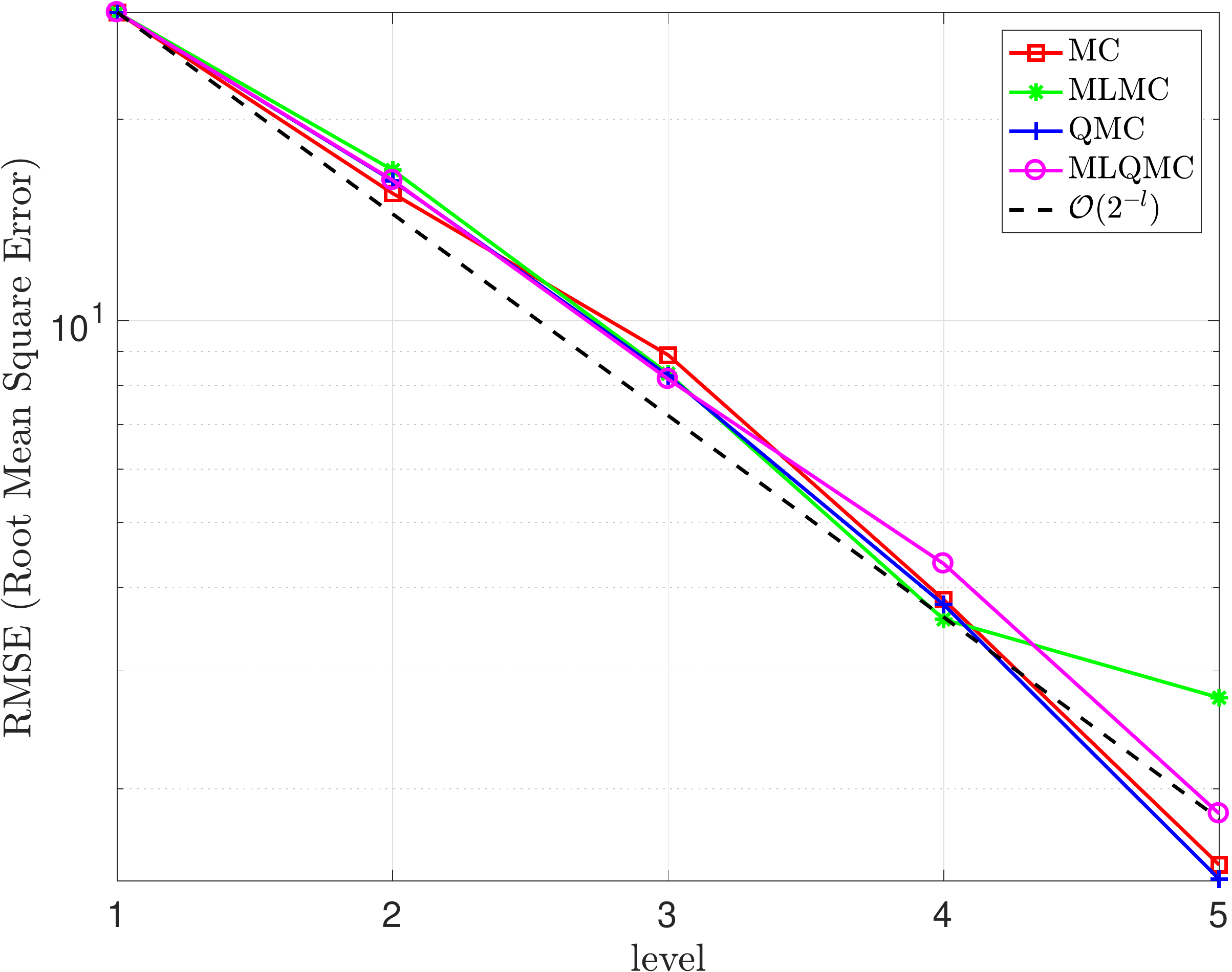}
\includegraphics[width=0.3\textwidth]{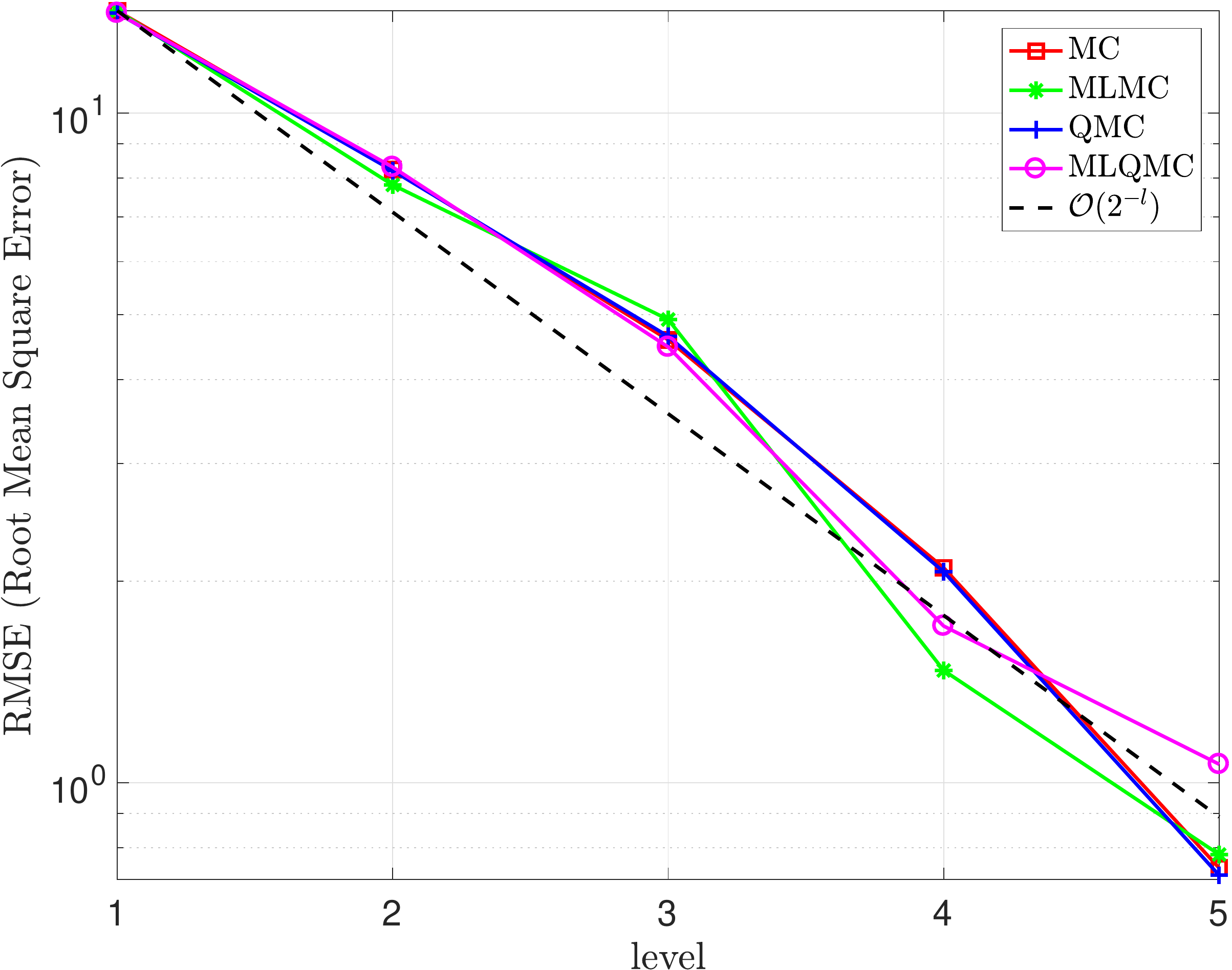}
\includegraphics[width=0.3\textwidth]{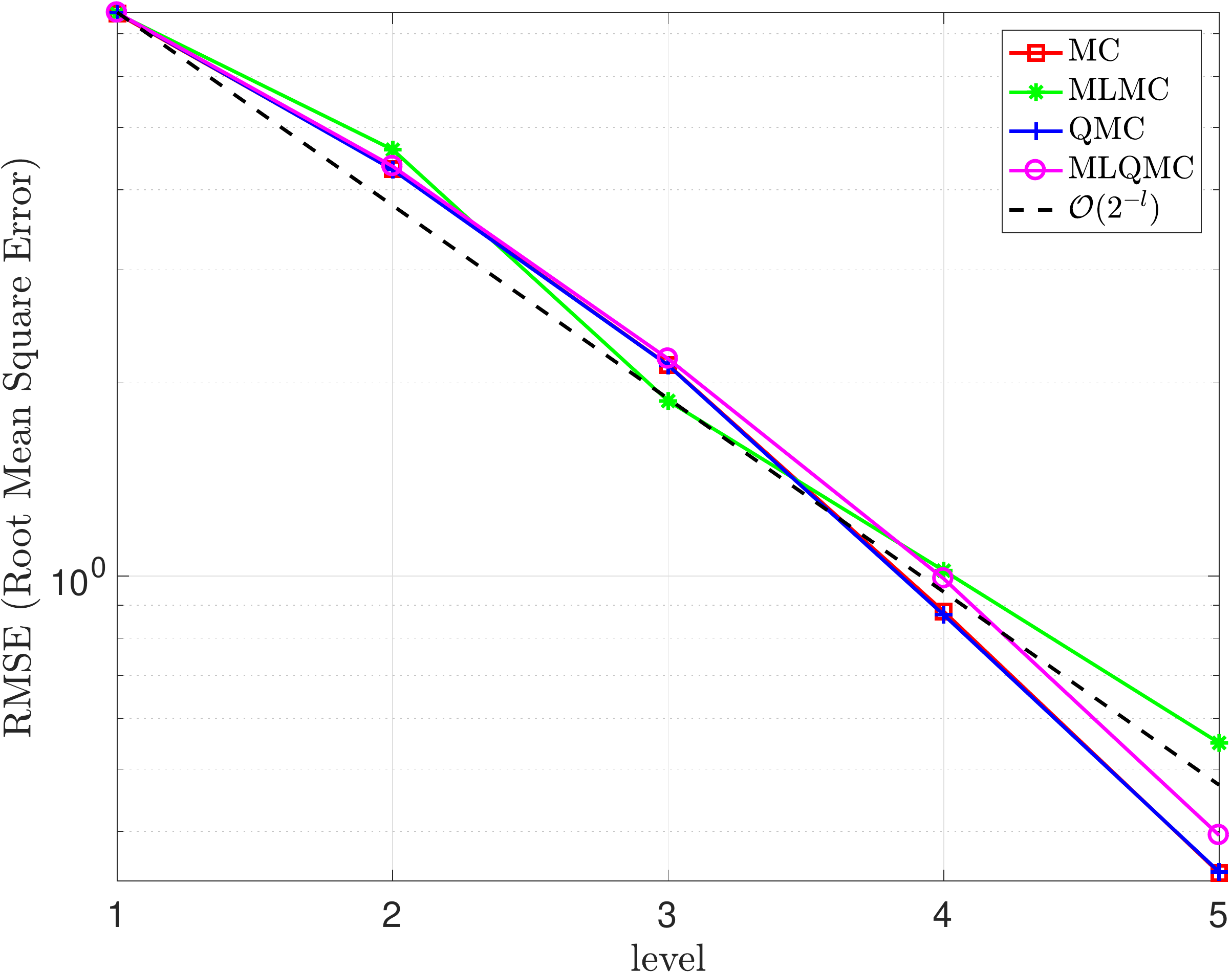}
\captionsetup{justification=centering}
\caption{Convergence in the $H^1$ norm of the action potential at the 
locations specified in Figure \ref{fig::wall-points} following the order going from the bottom to the top.}
\label{fig::AP-convergence-1}
\end{figure} 

The second test is concerned with points located at the circumference 
of a horizontal cut of the heart surface. These are shown in Figure 
\ref{fig::circonference-points}.
\begin{figure}[h!]
\centering
\includegraphics[width=0.49\textwidth]{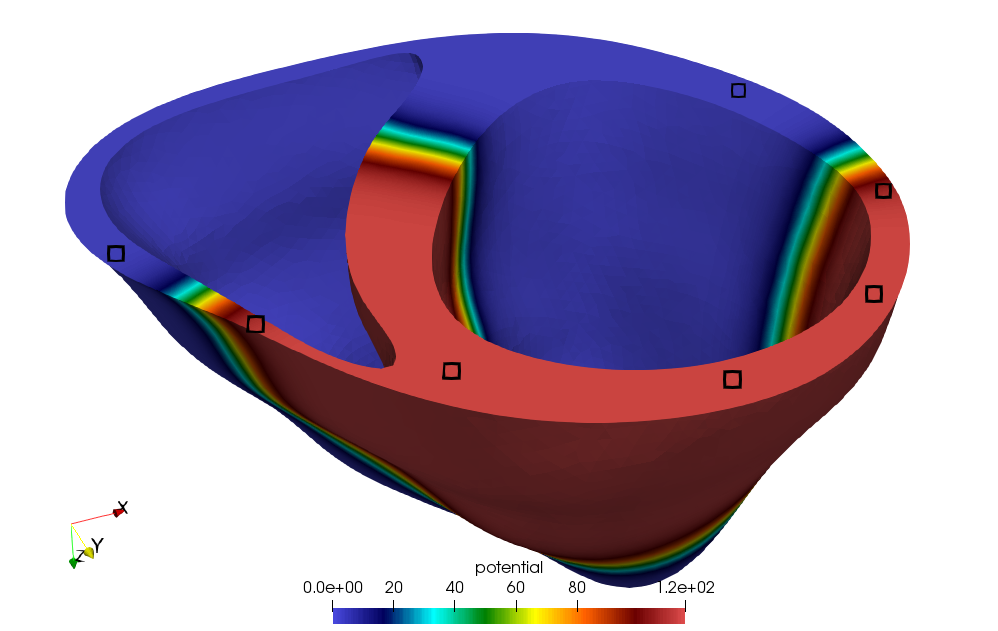}
\captionsetup{justification=centering}
\caption{Locations selected at the circumference of a horizontal cut of the heart surface.}
\label{fig::circonference-points}
\end{figure} 

Since the behaviour of the action potential at these points follows
a similar pattern to that of the previously shown ones, cf.\ Figure 
\ref{fig::AP-wall}, we directly show the graph regarding the convergence of 
the action potential at these points in Figure \ref{fig::AP-convergence-2}.
Again, we see the expected convergence rate for all quadrature methods tested.

\begin{figure}[H] 
\centering
\includegraphics[width=0.3\textwidth]{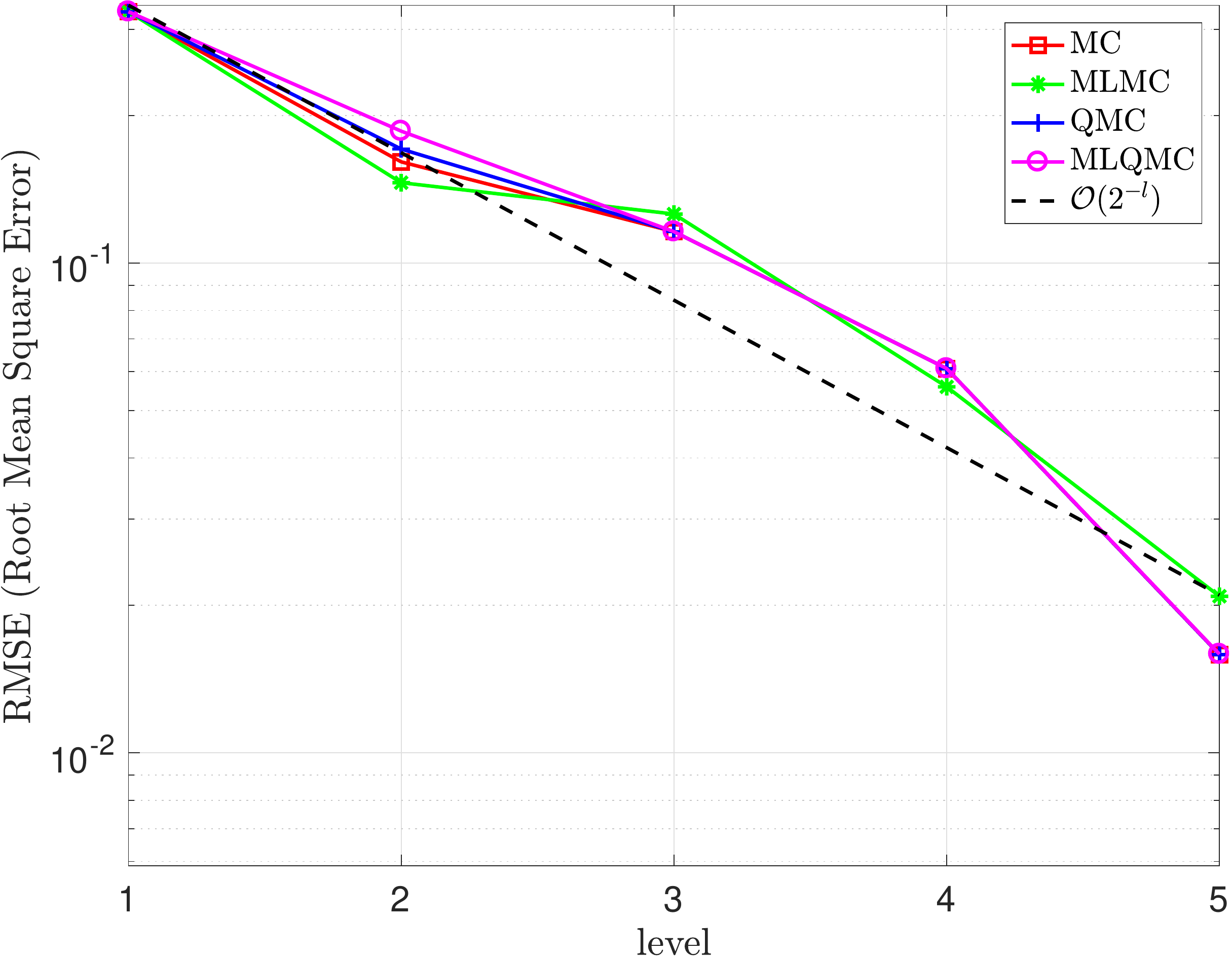}
\includegraphics[width=0.3\textwidth]{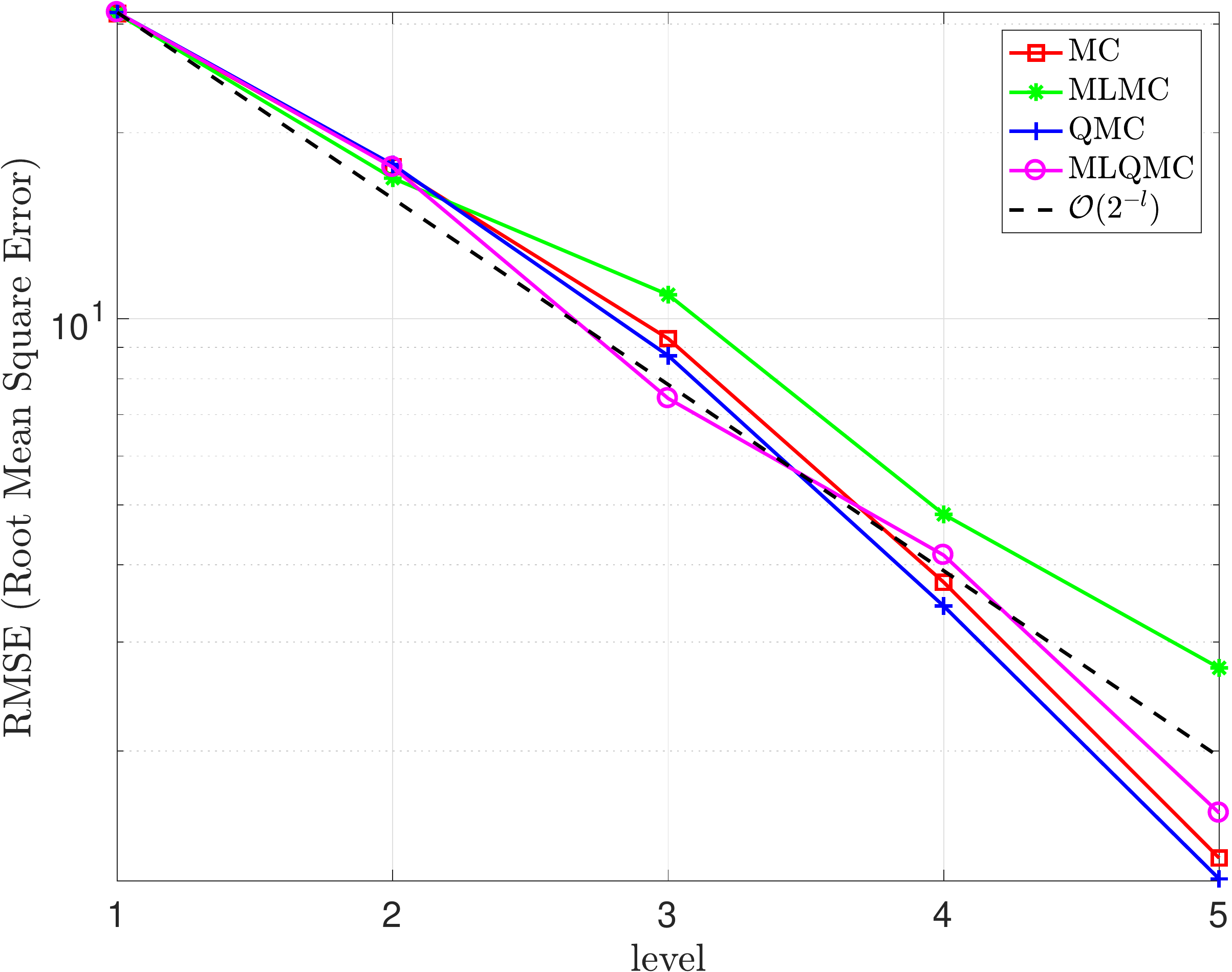}
\includegraphics[width=0.3\textwidth]{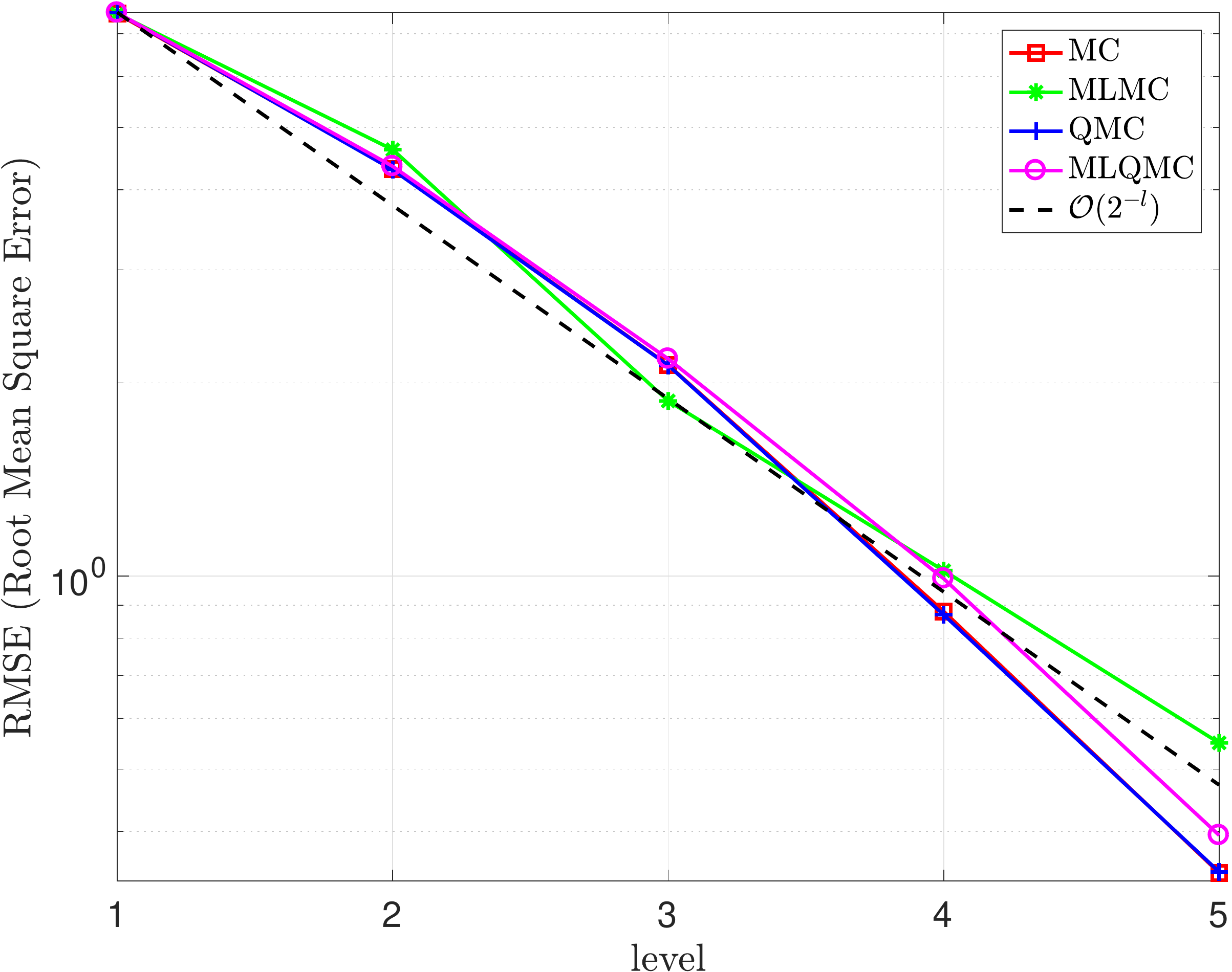}
\includegraphics[width=0.3\textwidth]{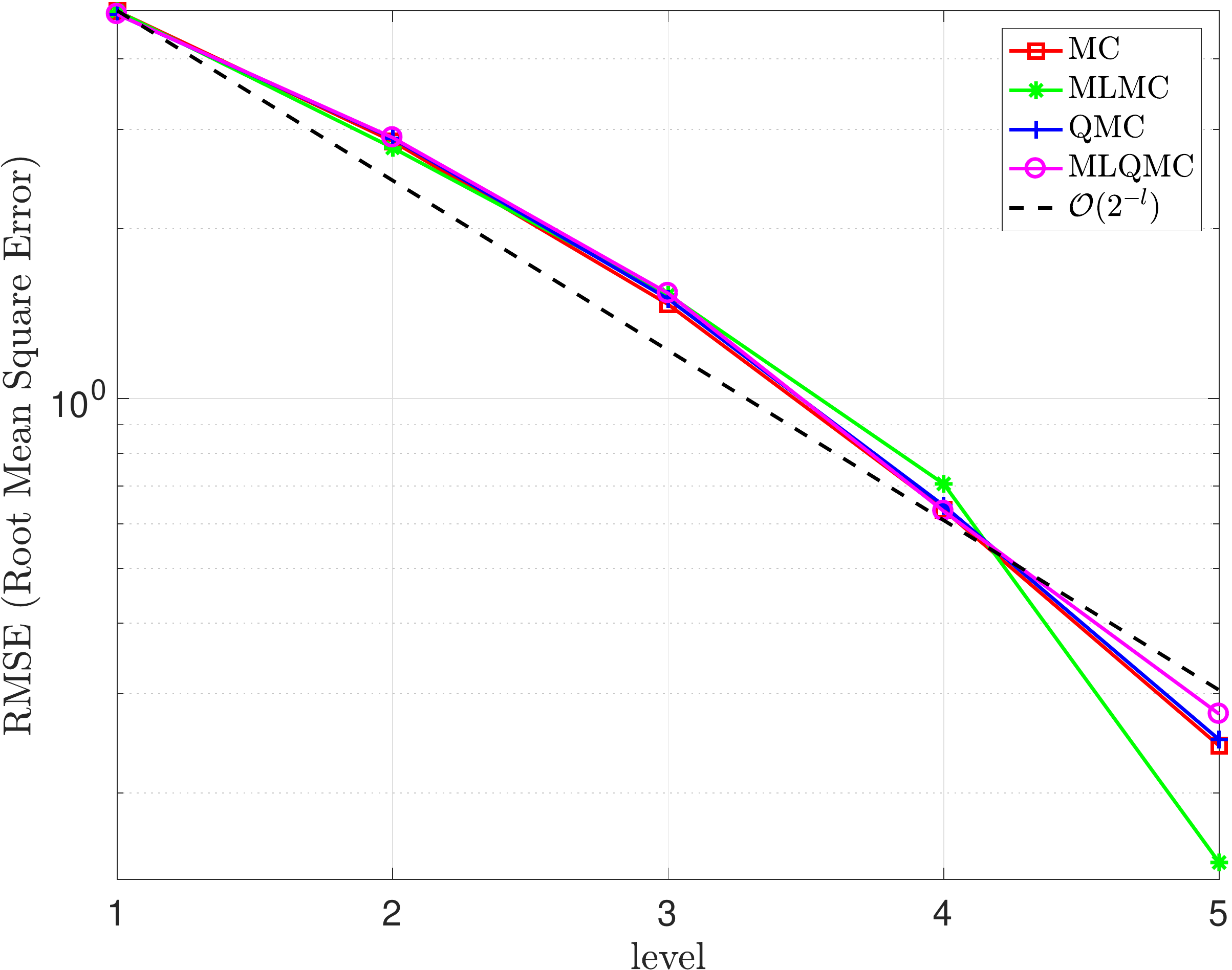}
\includegraphics[width=0.3\textwidth]{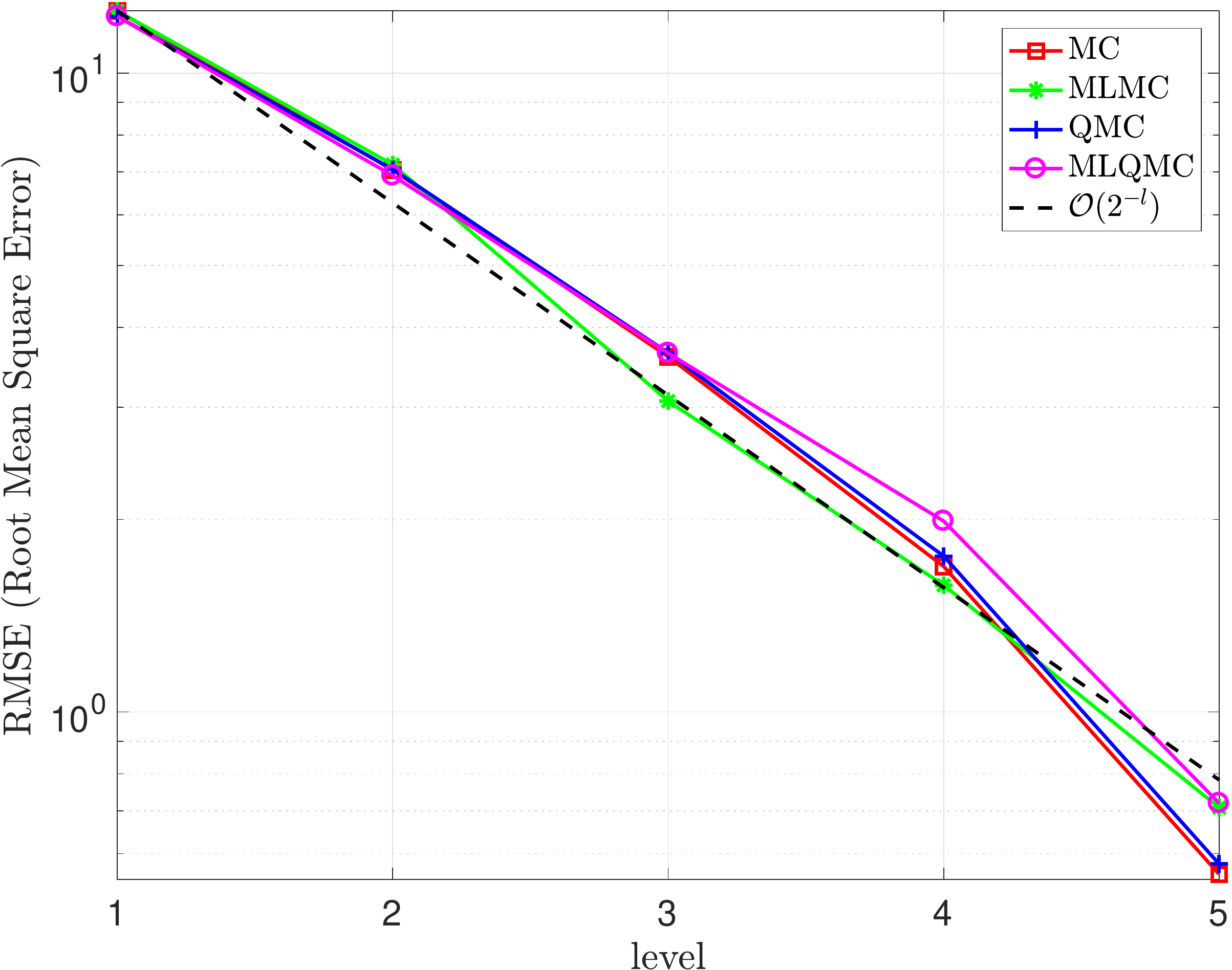}
\includegraphics[width=0.3\textwidth]{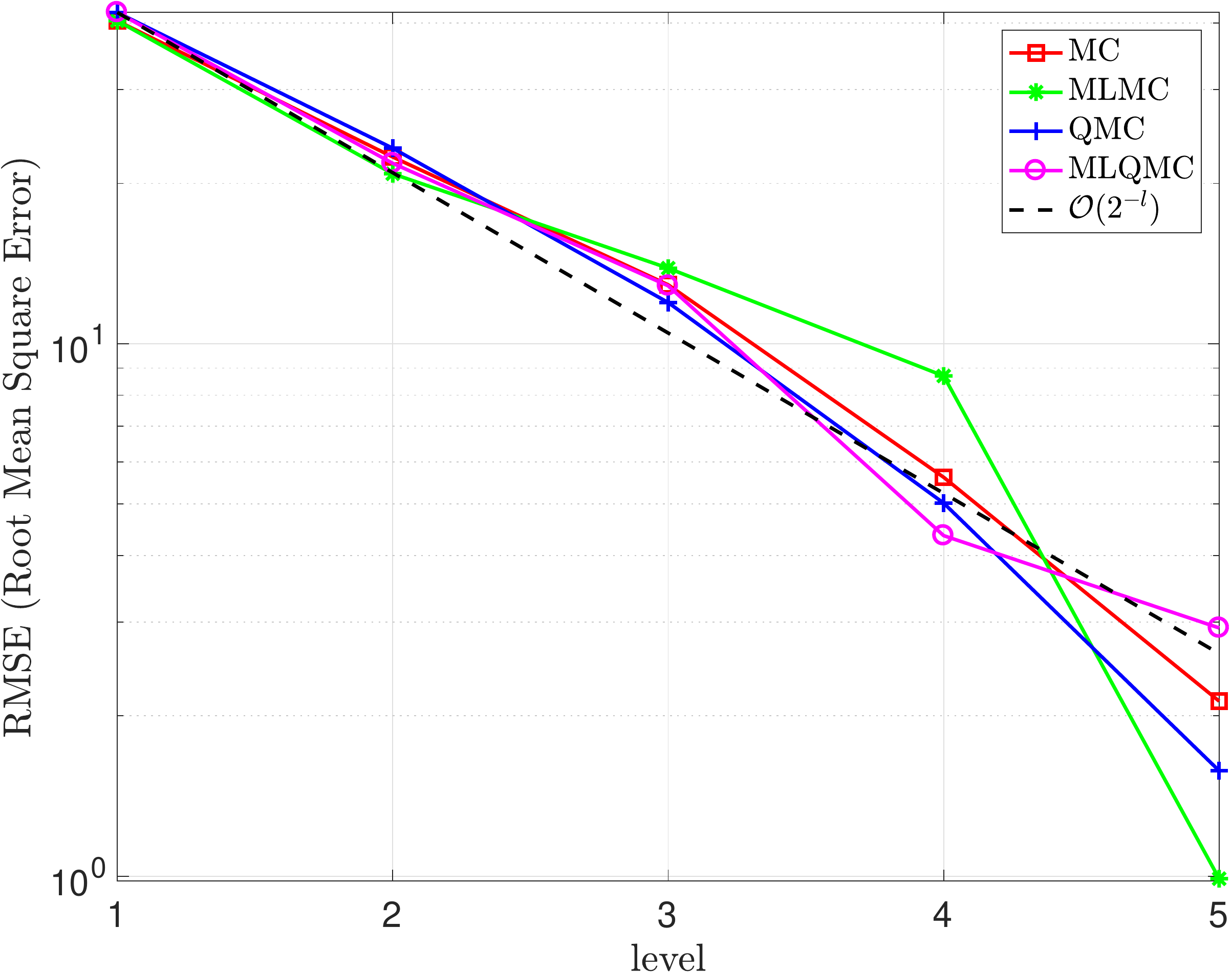}
\includegraphics[width=0.3\textwidth]{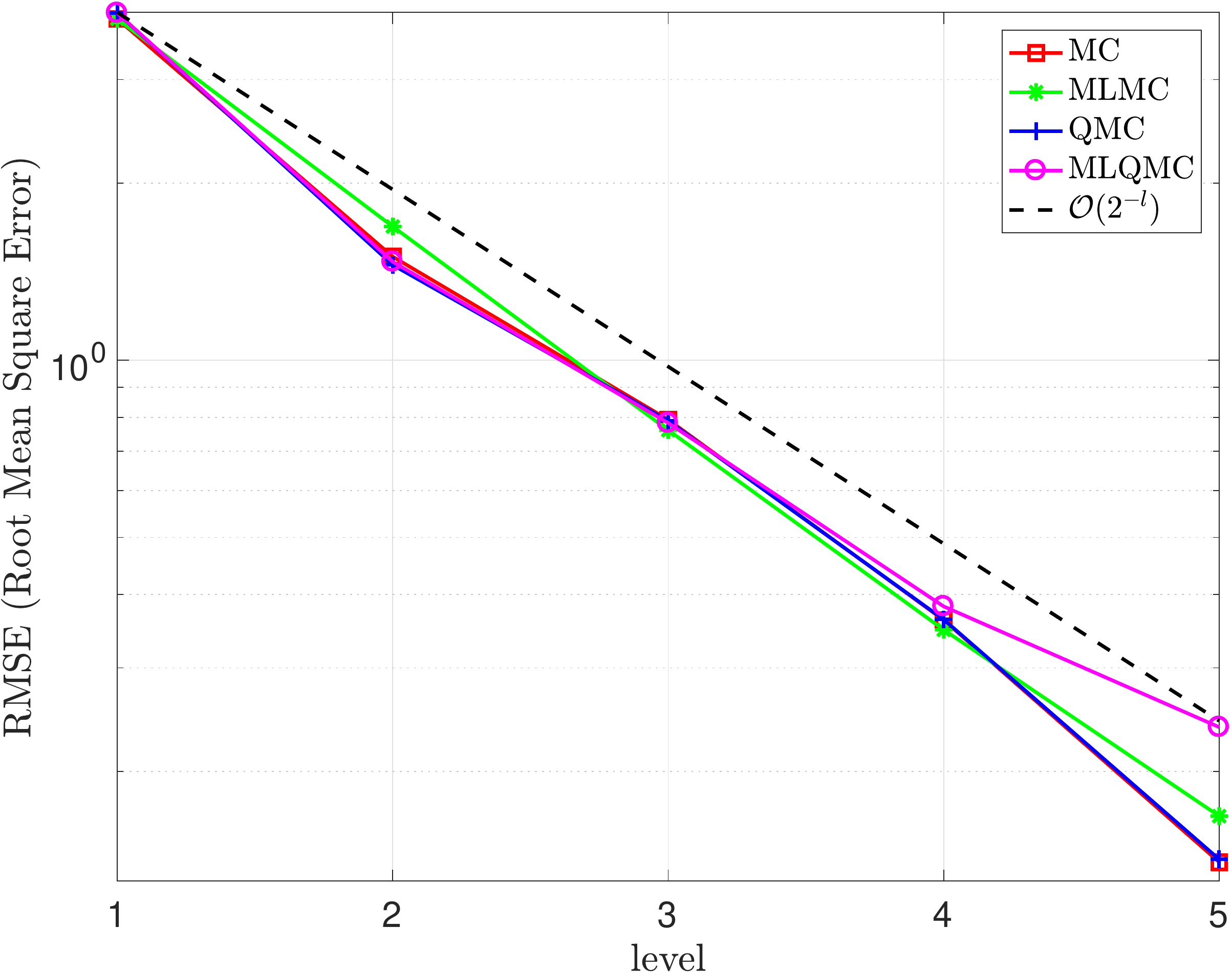}
\captionsetup{justification=centering}
\caption{Convergence of the action potential at the locations 
specified in Figure \ref{fig::circonference-points}
following the order going from left to right.}
\label{fig::AP-convergence-2}
\end{figure} 


\subsubsection{Activation time}
We start by selecting points at equivalent geodesic 
distance from the stimulus location. These points are 
shown in Figure \ref{fig::activation-contour}. The geodesic 
distance is calculated by solving an eikonal problem with a 
zero initial condition on the originating point~\cite{pezzuto2019sampling}, 
i.e.\ the stimulus in our case. The graphs showing the 
convergence of the activation times for these locations
are reported in Figure \ref{fig::activation-graph-1},
with all of them showing the expected rate of convergence for all quadrature methods tested.

\begin{figure}[h!] 
\centering
\includegraphics[width=0.45\textwidth]{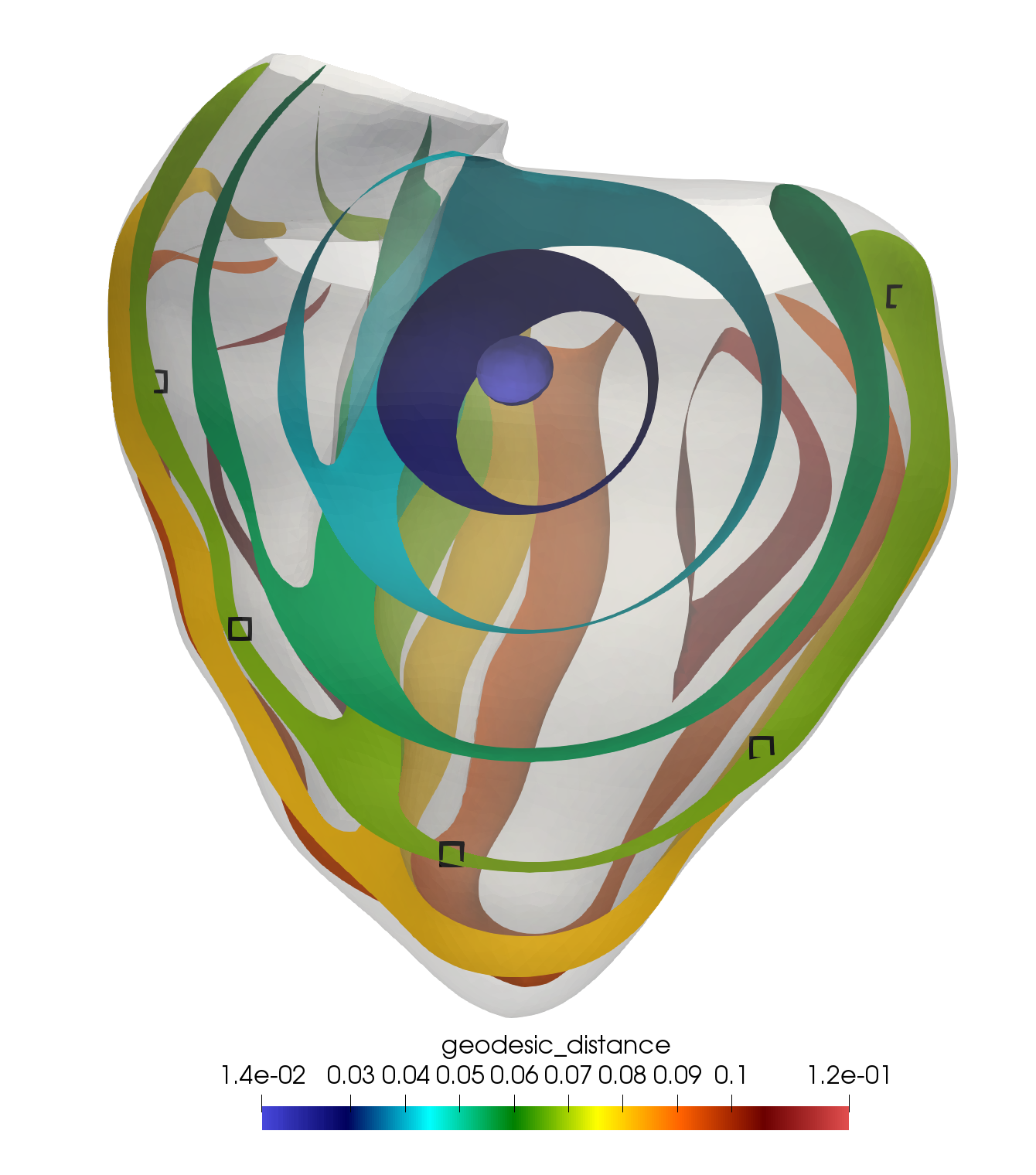}
\includegraphics[width=0.45\textwidth]{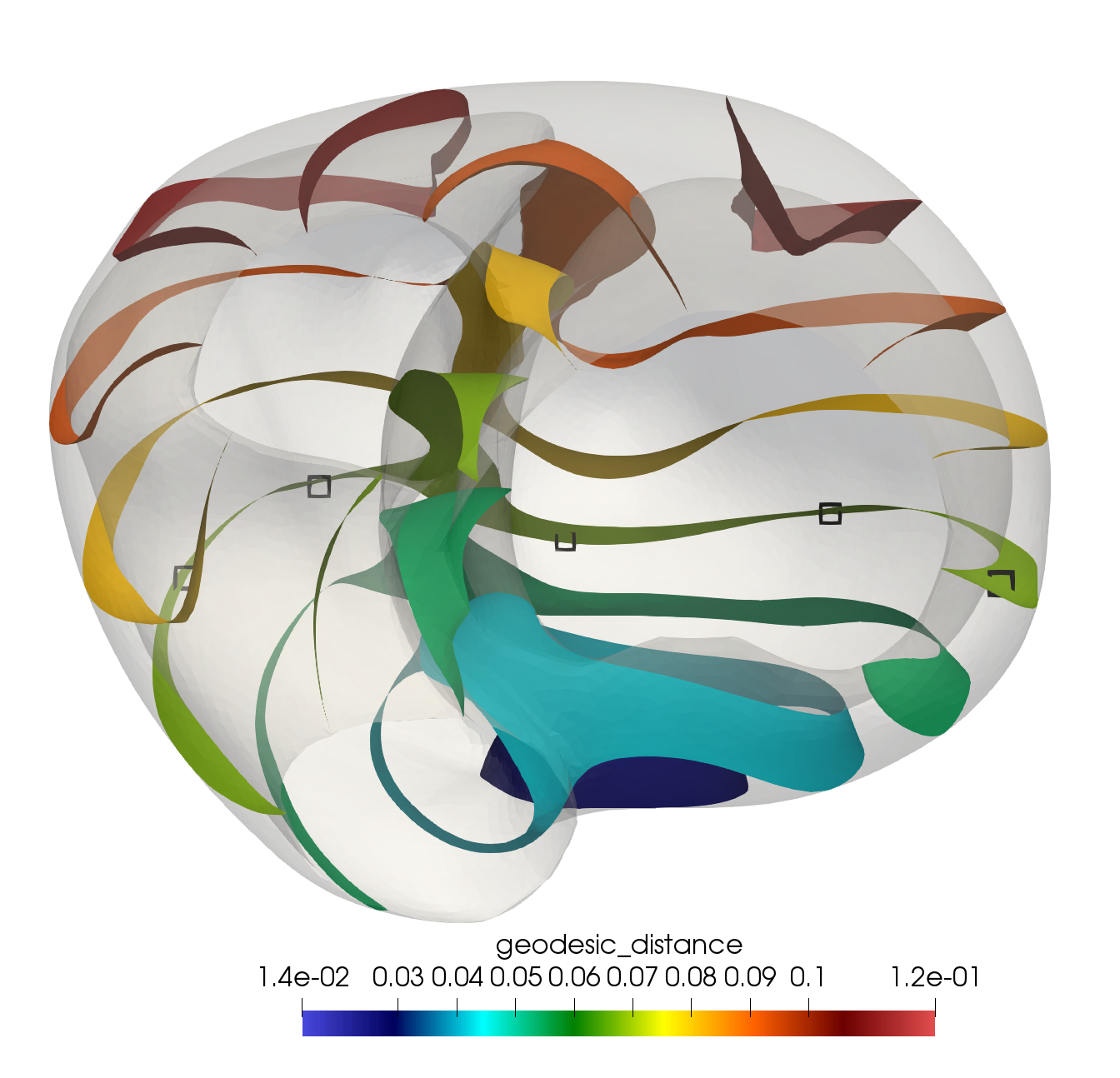}
\caption{Locations selected at equivalent geodesic distance from the stimulus.}
\label{fig::activation-contour}
\end{figure} 

\begin{figure}[H] 
\centering
\includegraphics[width=0.3\textwidth]{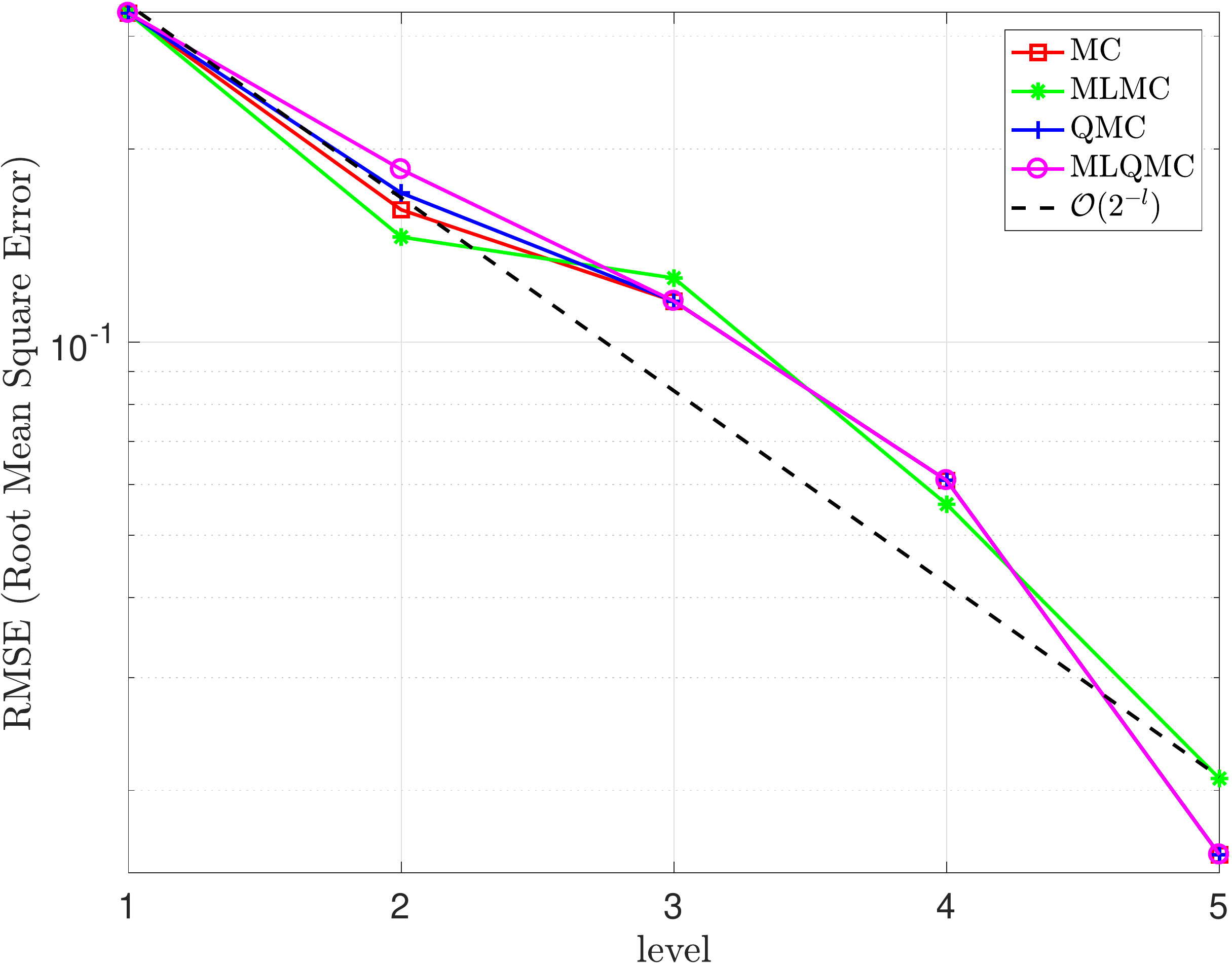}
\includegraphics[width=0.3\textwidth]{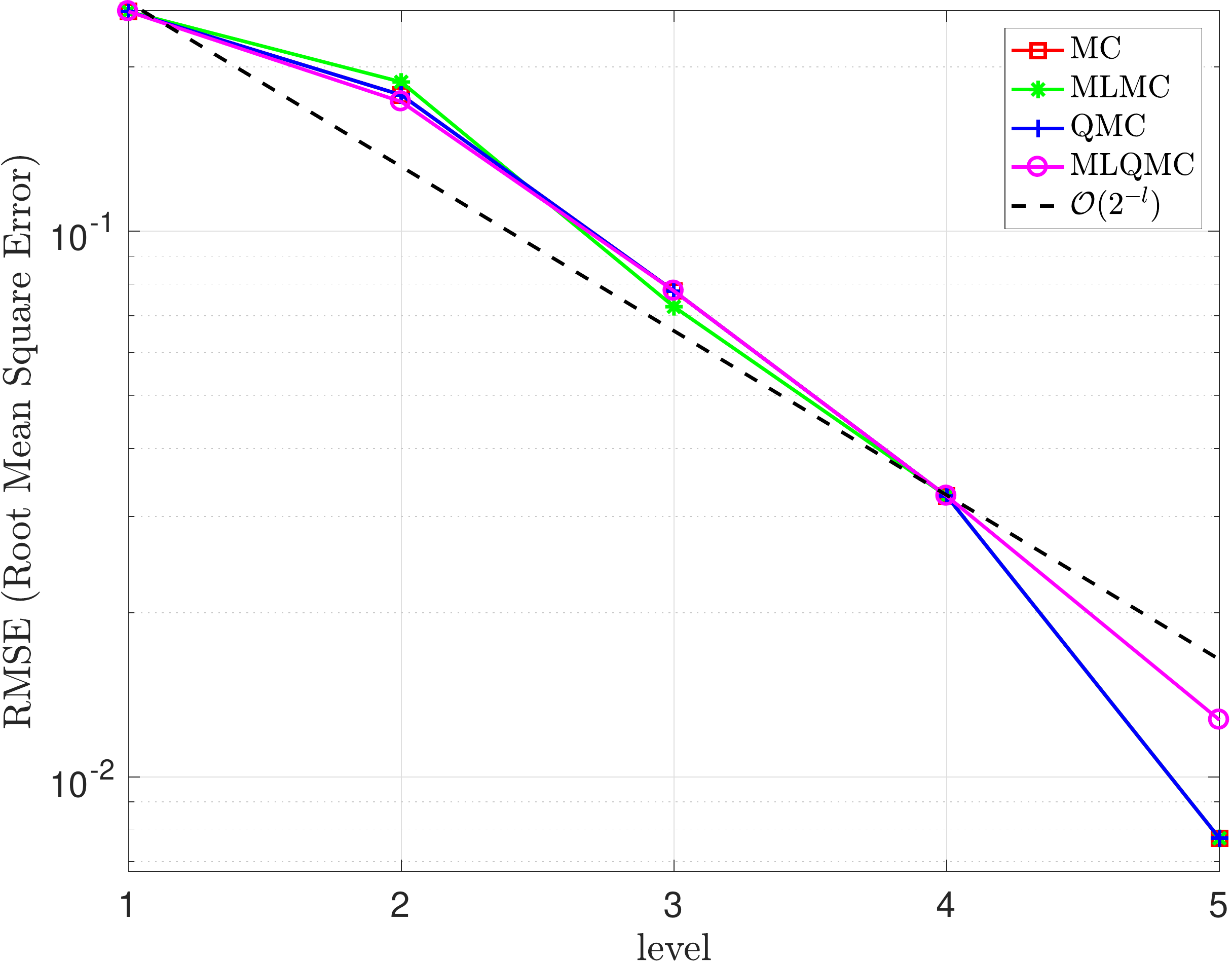}
\includegraphics[width=0.3\textwidth]{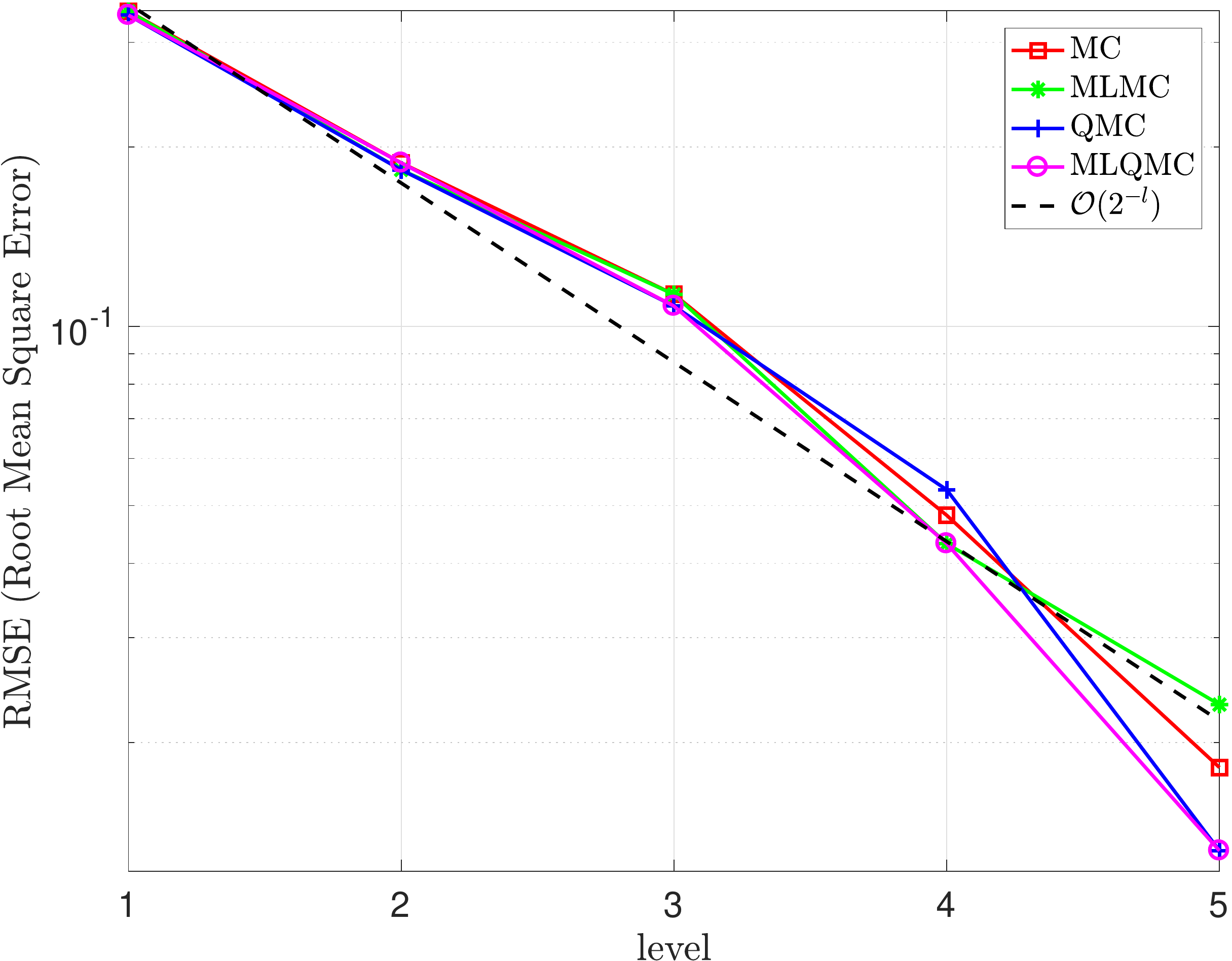}
\includegraphics[width=0.3\textwidth]{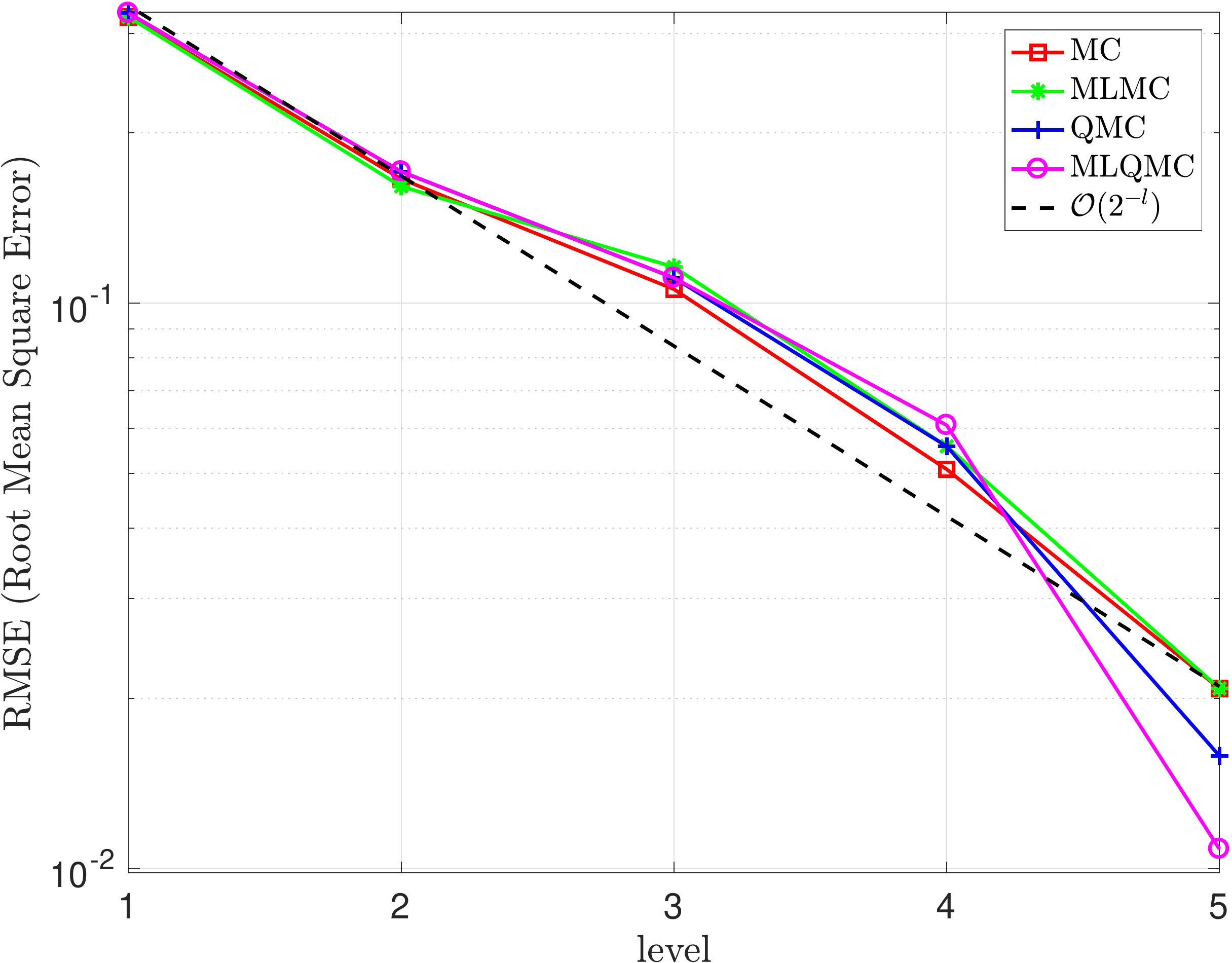}
\includegraphics[width=0.3\textwidth]{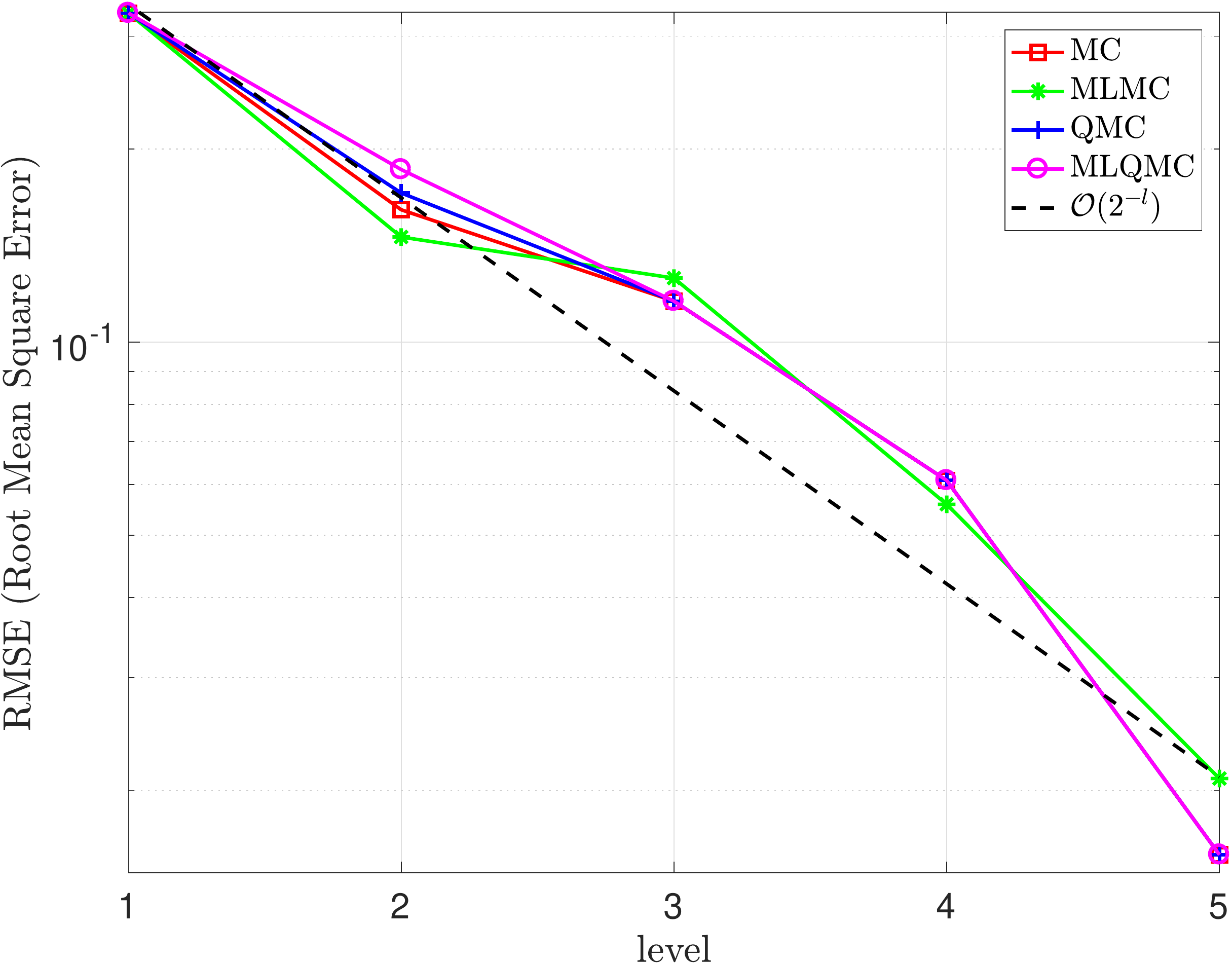}
\captionsetup{justification=centering}
\caption{Convergence of the activation time at the 
locations specified in Figure \ref{fig::activation-contour} following the order going from left to right.}
\label{fig::activation-graph-1}
\end{figure} 

We next select locations at the circumference of the left ventricle. 
These are shown in Figure \ref{fig::activation-transversal}. 
This electrical signal, when propagated to the chest,
is exactly what is perceived clinically (on a 
electrocardiogram monitor).
Mathematically, it is possible to map the surface
potential to the chest by solving an additional
diffusion problem, see \cite{fernandez2010decoupled,
boulakia2010mathematical}.
As we have several 
discretization levels, we need to ensure that these points are 
well-defined on each one of them. The convergence graphs for the activation
times at these locations,
which validate the the expected rate of convergence
for all quadrature methods tested,
are reported in Figure \ref{fig::activation-graph-2}.

\begin{figure}[htb] 
\centering
\includegraphics[width=0.45\textwidth]{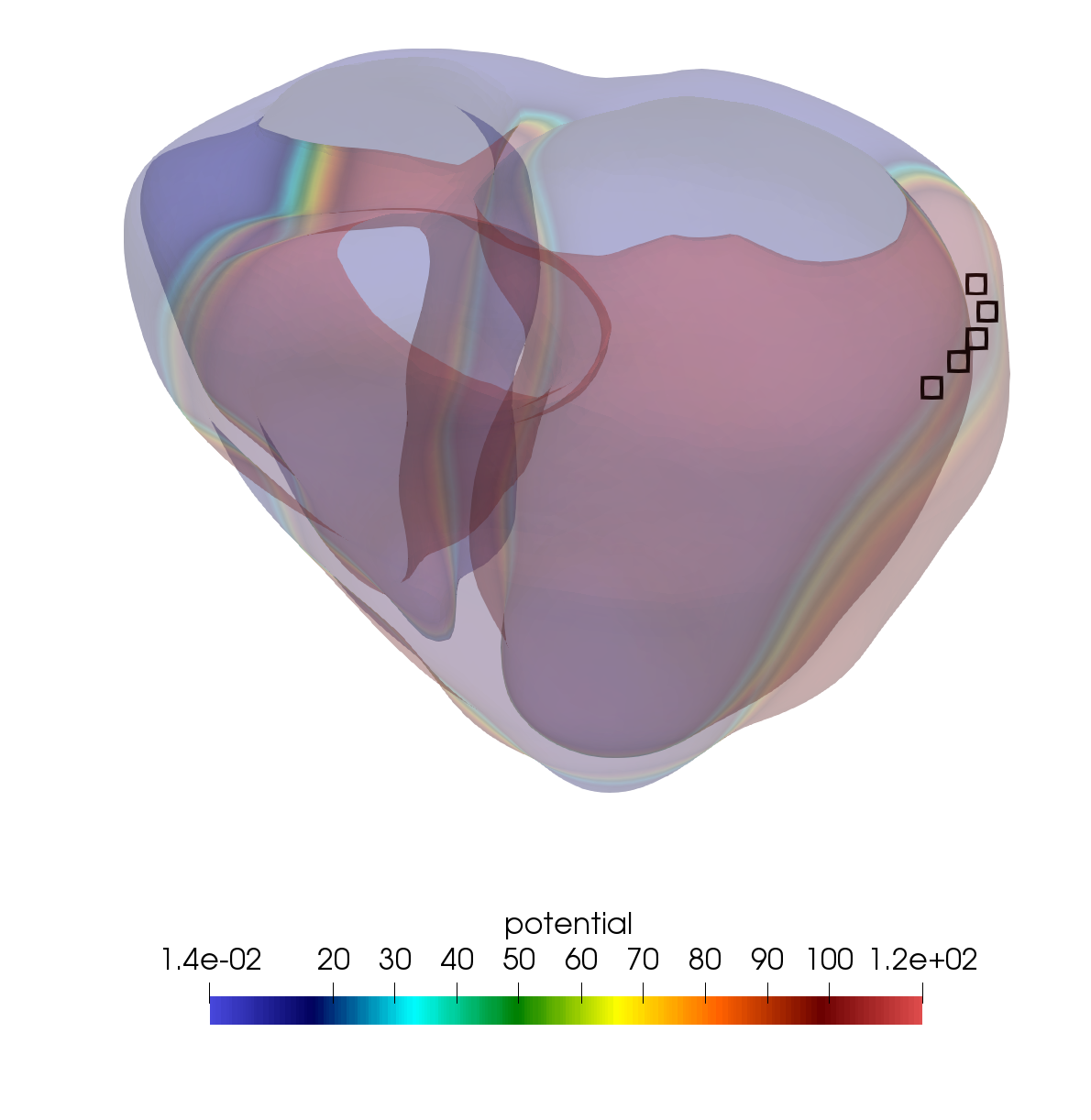}
\includegraphics[width=0.45\textwidth]{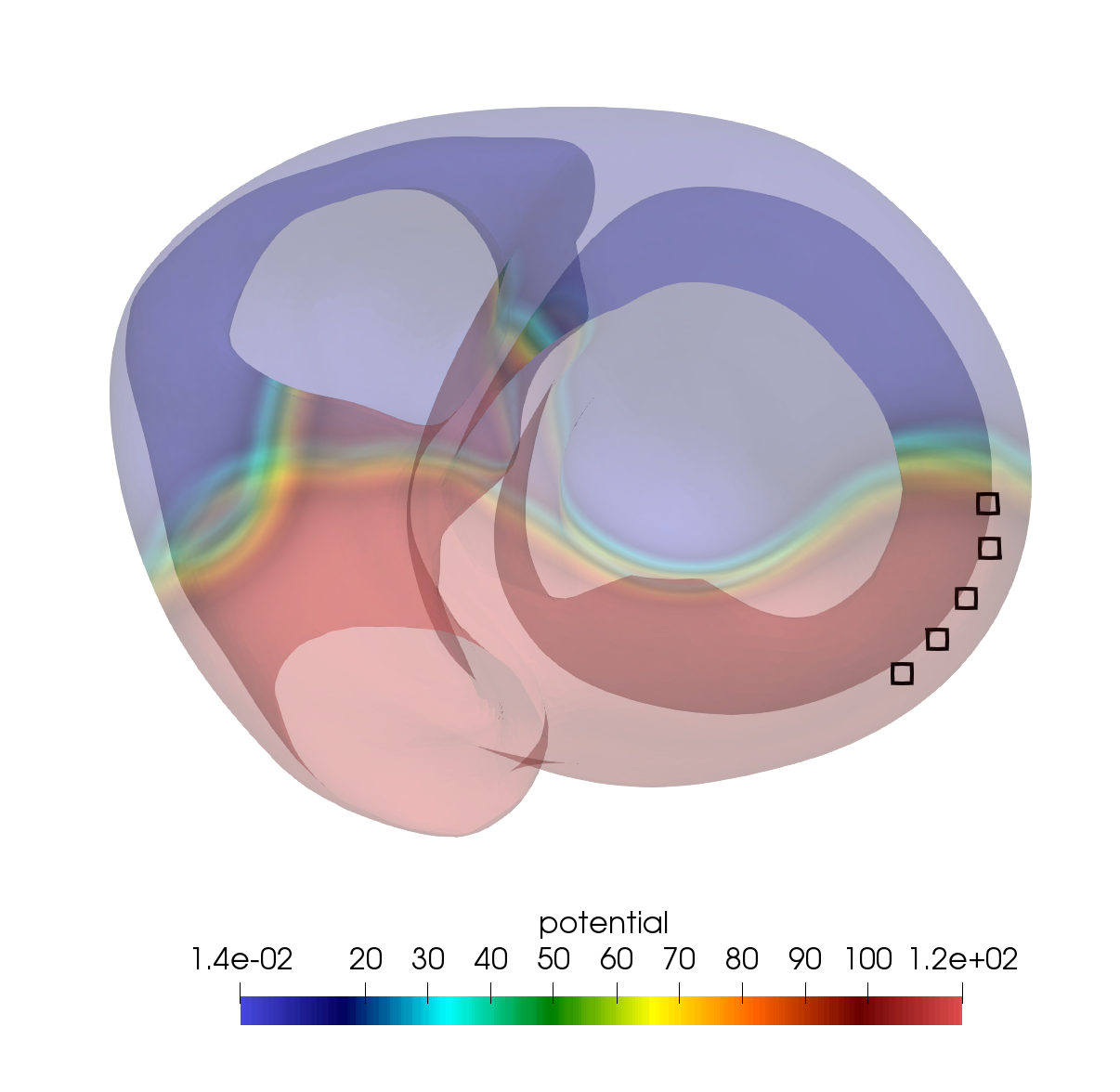}
\captionsetup{justification=centering}
\caption{Locations selected at the periphery of the heart surface.}
\label{fig::activation-transversal} 
\end{figure} 

\begin{figure}[H] 
\centering
\includegraphics[width=0.3\textwidth]{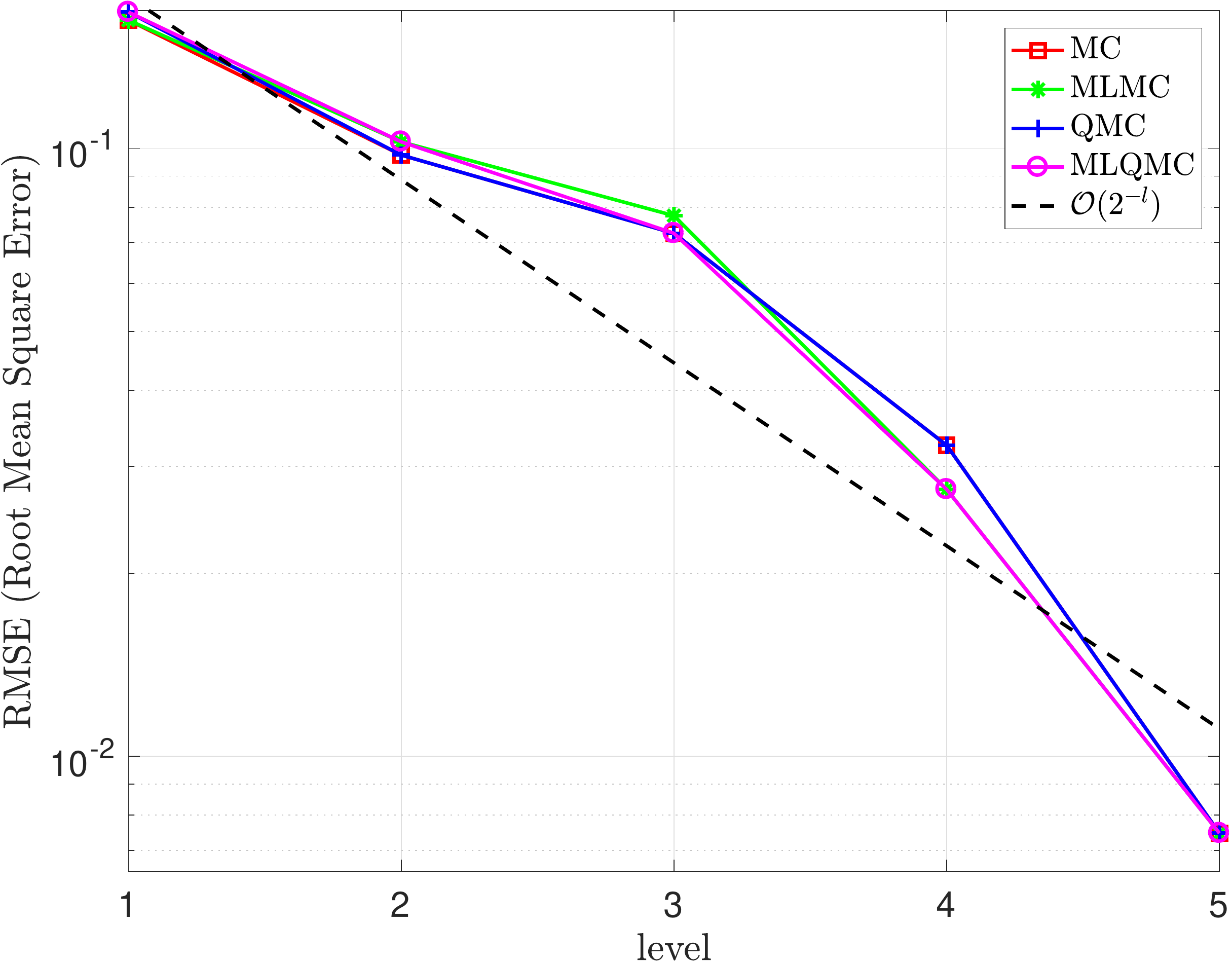}
\includegraphics[width=0.3\textwidth]{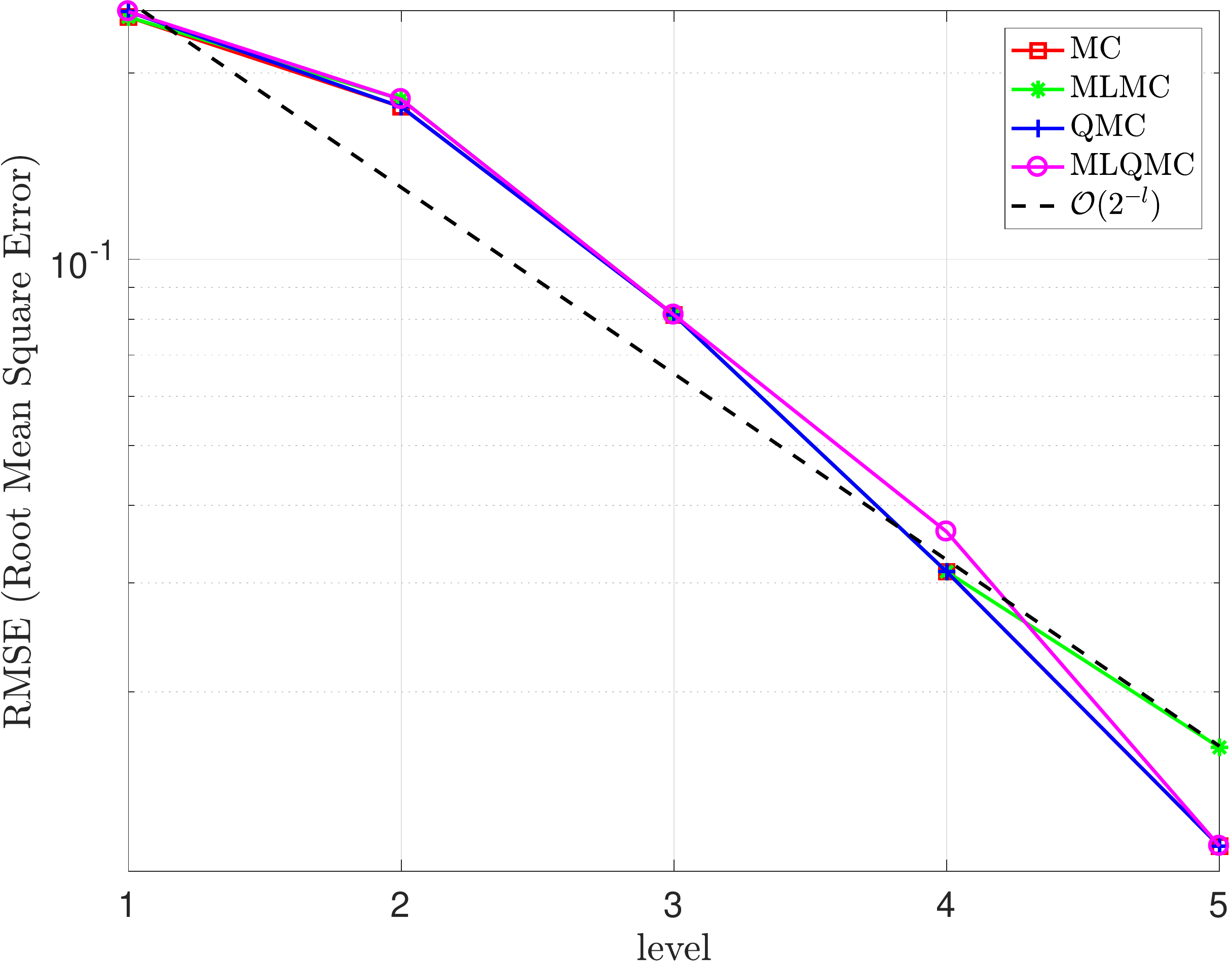}
\includegraphics[width=0.3\textwidth]{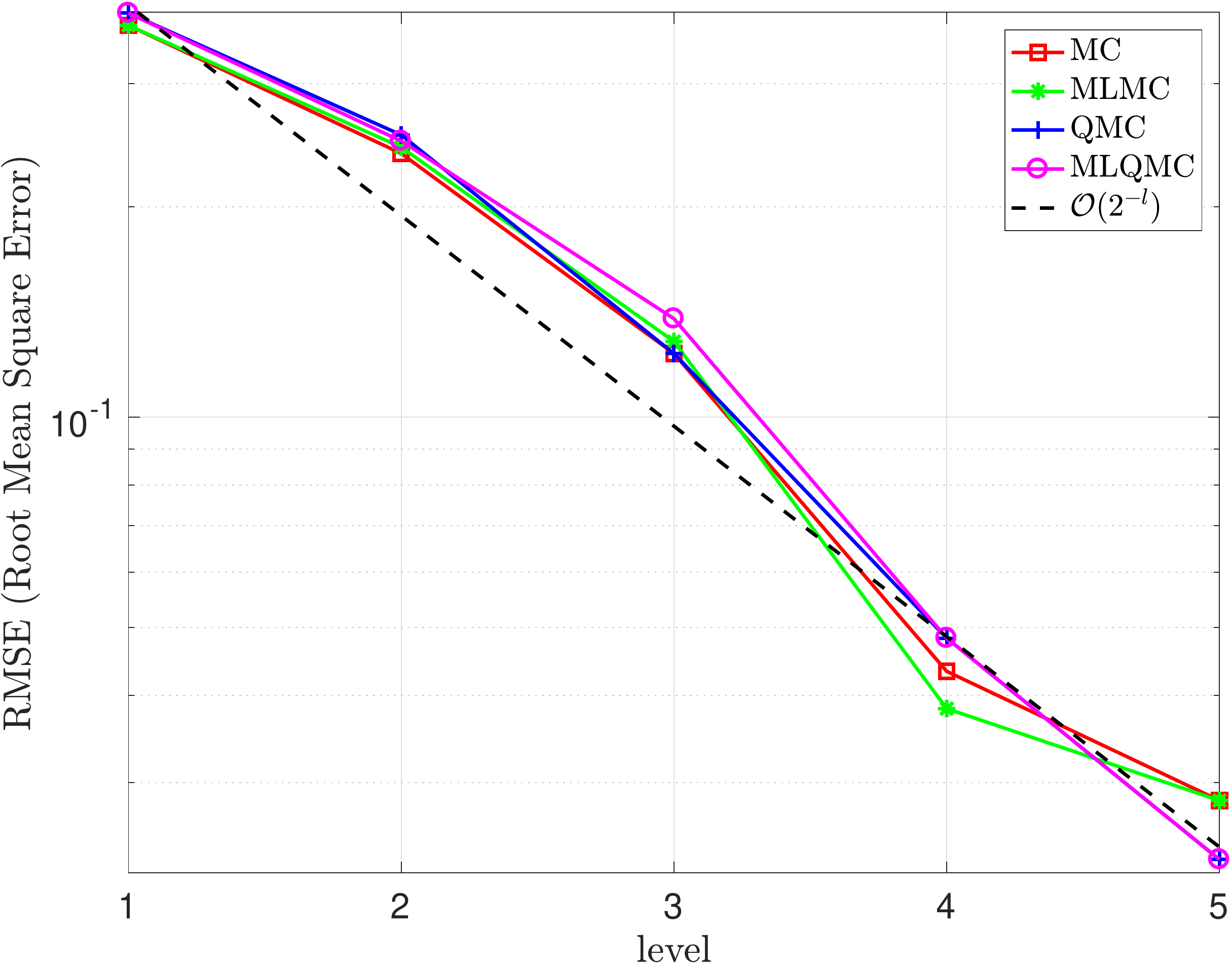}
\includegraphics[width=0.3\textwidth]{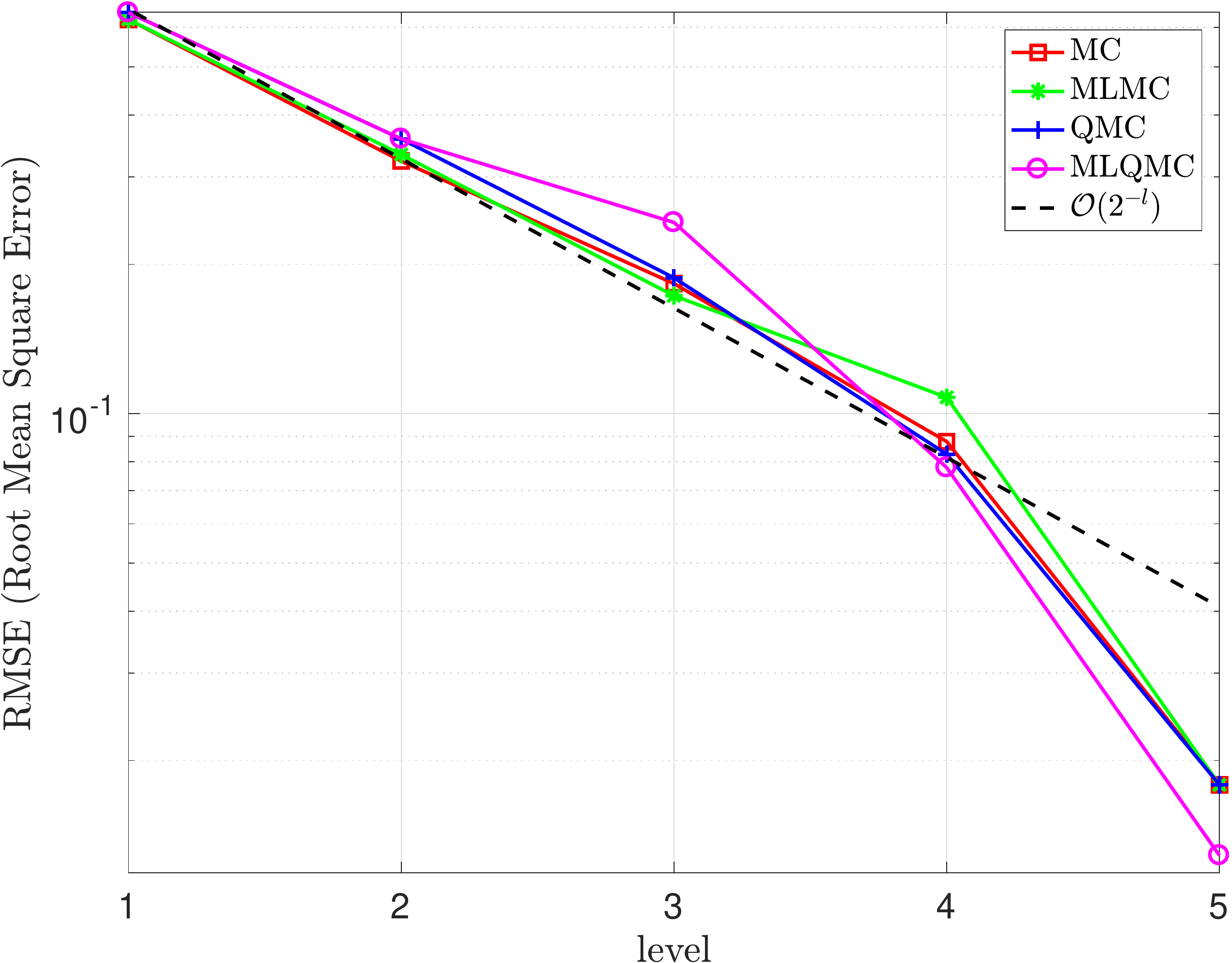}
\includegraphics[width=0.3\textwidth]{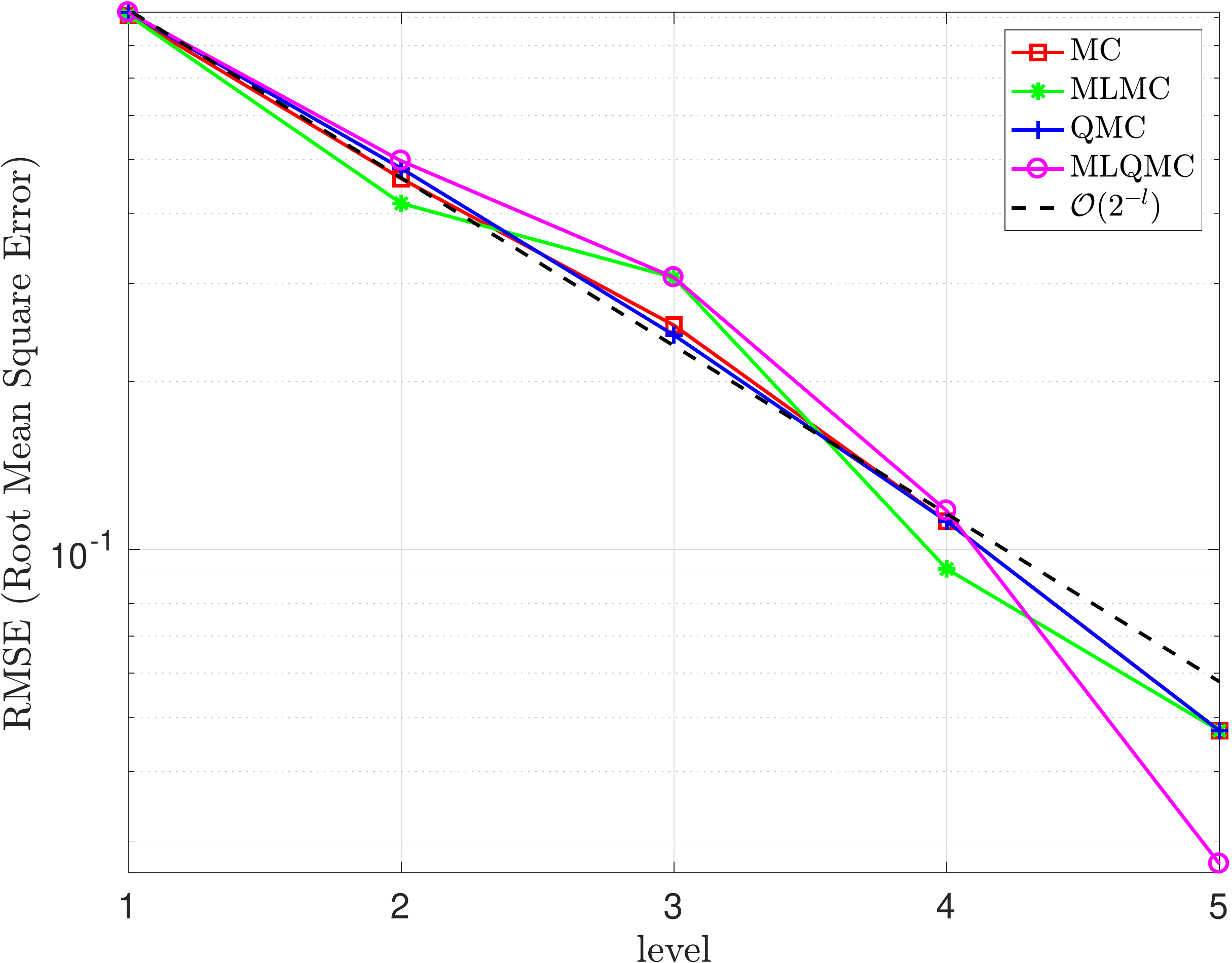}
\captionsetup{justification=centering}
\caption{Convergence of the activation time at the 
locations specified in Figure \ref{fig::activation-transversal} following an order going from the furthest of the (final state) travelling wave to the nearest.}
\label{fig::activation-graph-2}
\end{figure} 


\section{Conclusion}  \label{section:conclusion} 
In this article,
we have considered the monodomain equation from cardiac electrophysiology
with the Fitz-Hugh Nagumo model and an anisotropic conductivity tensor
that can account for increased diffusion along the direction of the heart fibers.
Modelling the heart fibers as a random vector field,
by means of the Karhunen-Lo\'eve expansion
from given expectation and covariance vector fields of the heart fibers,
we arrive at a parametric monodomain equation.
Thus,
common quantities of interest such as the action potential and the activation time
then also are subject to this uncertainty
and we therefore aim at computing their statistics,
which amounts to the evaluation of a high-dimensional integral.

To enable the approximate computation of the high-dimensional integral,
we propose to combine a space-time discretisation of the monodomain equation
using finite elements in space and the Crank--Nicolson method in time,
which yields a method with good parallel scalability,
in a multilevel manner with dimension robust quadrature methods.
The resulting scheme is fully parallelized in space, time and stochastics.

Our numerical experiments show that the approach is feasible and that
the considered quadrature methods consistently satisfy their theoretical
convergence rates.
This indicates that in the settings of our numerical experiments
the more restrictive regularity requirements for the QMC quadrature are fulfilled,
and that the mixed regularity requirement for the multilevel quadrature methods
MLMC and MLQMC are fulfilled, as well.
The results show that we can significantly improve the amount of work
required for a certain error by using the QMC and MLQMC methods
instead of the MC and MLMC methods.
It is important to note,
that as the QMC and MLQMC quadrature methods using Halton points
are essentially a MC or MLMC method where the random sequence of sample points
is replaced with fewer Halton points,
the increased performance does not require any additional non-trivial
implementation modifications to be made when changing from the MC and MLMC methods
to the QMC and MLQMC methods.

Lastly,
the numerical experiments on the realistic heart geometry additionally show
the utility of the multilevel estimator using the quadrature differences
instead of solution differences,
\begin{equation*}
  \operatorname{QoI}_L^{\text{ML}}[u] \isdef\sum_{l=0}^L
  (\mathcal{Q}_{l}-\mathcal{Q}_{l-1})\big(\mathcal{F}_{L-l}[u](\cdot)\big) ,
\end{equation*}
combined with the usage of non-nested meshes when considering
involved space geometries.

\section{Acknowledgements} 

The authors would like to thank the Swiss National Science Foundation (SNSF) for their support through the project “Multilevel Methods and Uncertainty Quantification in Cardiac Electrophysiology” in collaboration with the University of Basel (grant agreement SNSF-$205321\_169599$). The authors also gratefully acknowledge the support of the Center of Computational Medicine in Cardiology and in particular Dr. Simone Pezzuto for providing the heart geometry and many fruitful discussions.
\bibliography{biblio.bbl}
\end{document}